\documentclass[11pt]{amsart}

\usepackage{a4wide}
\usepackage{graphicx}
\usepackage{amssymb}
\usepackage{epstopdf}
\usepackage{enumerate}
\usepackage{titletoc}
\usepackage[colorlinks=true, urlcolor=blue, linkcolor=blue, citecolor=black]{hyperref}
\usepackage[nameinlink]{cleveref}
\usepackage{esint}

\theoremstyle{plain}
\newtheorem{theorem}{Theorem}[section]
\newtheorem{lemma}[theorem]{Lemma}

\newtheorem{proposition}[theorem]{Proposition}

\theoremstyle{definition}
\newtheorem{definition}[theorem]{Definition}

\newtheorem{example}[theorem]{Example}

\numberwithin{equation}{section}

\newcommand{\Rn}{{\mathbb{R}^n}}
\newcommand{\ve}{\vert}
\newcommand{\lv}{\left\vert}
\newcommand{\rv}{\right\vert}
\newcommand{\Ve}{\Vert}

\newcommand{\lb}{\left\lbrace}
\newcommand{\rb}{\right\rbrace}
\newcommand{\M}{\mathcal{M}}
\newcommand{\I}{\mathcal{I}}

\title[Generalized Evans--Krylov and Schauder type estimates]{Generalized Evans--Krylov and Schauder type estimates for nonlocal fully nonlinear equations with rough kernels of variable orders}

\author{Minhyun Kim}
\address{Department of Mathematical Sciences, Seoul National University, Seoul 08826, Republic of Korea}
\email{201421187@snu.ac.kr}

\author{Ki-Ahm Lee}
\address{Department of Mathematical Sciences, Seoul National University, Seoul 08826, Republic of Korea \& Korea Institute for Advanced Study, Seoul 02455, Republic of Korea}
\email{kiahm@snu.ac.kr}

\subjclass[2010]{35B65, 35J60, 47G20, 60G51.}

\begin{document}

\begin{abstract}
We establish the generalized Evans–Krylov and Schauder type estimates for nonlocal fully nonlinear elliptic equations with rough kernels of variable orders. In contrast to the fractional Laplacian type operators having a fixed order of differentiability $\sigma \in (0,2)$, the operators under consideration have variable orders of differentiability. Since the order is not characterized by a single number, we consider a function $\varphi$ describing the variable orders of differentiability, which is allowed to oscillate between two functions $r^{\sigma_1}$ and $r^{\sigma_2}$ for some $0 < \sigma_1 \leq \sigma_2 < 2$. By introducing the generalized H\"older spaces, we provide $C^{\varphi\psi}$ estimates that generalizes the standard Evans–Krylov and Schauder type $C^{\sigma+\alpha}$ estimates.
\end{abstract}

\maketitle

\section{Introduction}

This paper is concerned with the Evans–Krylov type and the Schauder type generalized H\"older estimates for nonlocal fully nonlinear equations with rough kernels of variable orders. We first provide the Evans–Krylov type interior estimates for concave translation invariant elliptic equations with respect to the class $\mathcal{L}_0(\varphi)$ of linear operators with rough kernels of variable orders, where a function $\varphi$ is allowed to oscillate between two functions $r^{\sigma_1}$ and $r^{\sigma_2}$ for some $0 < \sigma_1 \leq \sigma_2 < 2$. We next establish the Schauder type estimates for equations having $x$ dependence in a generalized H\"older fashion. All the regularity estimates are obtained in much finer scale of H\"older space $C^{\varphi \psi}$, and recover the classical Evans–Krylov theorem and Schauder theorem for second order fully nonlinear equations as limits. Moreover, we do not restrict ourselves to the Bellman type operators, but consider nonlinear operators in full generality.

Let us first explain a close connection between the theory of stochastic processes and the theory of partial differential (and integro-differential) equations that motivates this work. The infinitesimal generator of a pure jump L\'evy process is given by
\begin{equation*}
Lu(x) = \int_{\Rn \setminus \lbrace 0 \rbrace} \left( u(x+y) - u(x) - \chi_{\lbrace \ve y \ve \leq 1 \rbrace} y \cdot Du(x) \right) \mu(dy),
\end{equation*}
where $\mu$ is the L\'evy measure. This formula has motivated both studies on pure jump stochastic processes and on integro-differential equations. The simplest example is the rotationally symmetric $\sigma$-stable process whose infinitesimal generator is the fractional Laplacian $-(-\Delta)^{\sigma/2}$. However, there is a large family of L\'evy processes, known as subordinate Brownian motions, that exhibits numerous interesting examples of nonlocal operators, such as sums of symmetric stable processes, relativistic stable processes, and geometric stable processes. Their infinitesimal generators have kernels of variable orders and this is why we study the integro-differential equations of variable orders. See \cite{CS98, BL02, BK05a, BK05b, BGR14, CKK11, Mim12} for the exposition to regularity results of linear equations using probabilistic methods.

On the other hand, the PDE approaches enable us to extend the regularity theory to nonlinear integro-differential equations. Especially, the class of integro-differential operators mentioned above has been studied recently \cite{Bis18, BL19, BJ20}. In this work let us focus on the Evans–Krylov theory and the Schauder theory, which have been studied only for equations of fixed order. The Evans–Krylov type $C^{\sigma+\alpha}$ interior estimate for nonlocal equations was first established by Caffarelli and Silvestre \cite{CS11a}. This result states that if $u$ is a bounded solution of $\inf_{L \in \mathcal{L}_2} Lu = 0$ in $B_1$, then $\Ve u \Ve_{C^{\sigma+\alpha}(\overline{B_{1/2}})} \leq C \Ve u \Ve_{L^\infty(\Rn)}$. Here $\mathcal{L}_2 = \mathcal{L}_2(\sigma)$ is the class of translation invariant linear operators of the form
\begin{equation} \label{eq:linear_operator}
Lu(x) = \int_\Rn (u(x+y) + u(x-y) - 2u(x)) K(y) dy,
\end{equation}
where kernels $K$ satisfy the ellipticity condition
\begin{equation} \label{eq:ellipticity_FL}
\lambda \frac{(2-\sigma)}{\ve y \ve^{n+\sigma}} \leq K(y) \leq \Lambda \frac{(2-\sigma)}{\ve y \ve^{n+\sigma}}
\end{equation}
with ellipticity constants $0 < \lambda \leq \Lambda$ and the constant order $\sigma \in (0,2)$, and the scaling invariant bounds for all its second order partial derivatives:
\begin{equation} \label{eq:class_L2}
[K]_{C^2(\Rn \setminus B_r)} \leq \Lambda (2-\sigma) r^{-n-\sigma-2} \quad\text{for all} ~ r > 0.
\end{equation}
Note that the symmetry of the kernels is encoded in the expression \eqref{eq:linear_operator}. See \cite{KL17} for the Evans–Krylov estimate for parabolic equations. In the paper \cite{Ser15a}, Serra improved this result in \cite{CS11a} to the equations with rough kernels. More precisely, he proved that if $u \in C^{\sigma+\alpha}(B_1) \cap C^\alpha (\Rn)$ solves $\inf_{L \in \mathcal{L}_0} Lu = 0$ in $B_1$, then $\Ve u \Ve_{C^{\sigma+\alpha}(\overline{B_{1/2}})} \leq C \Ve u \Ve_{C^\alpha(\Rn)}$, where $\mathcal{L}_0 = \mathcal{L}_0(\sigma)$ is the class of linear operators of the form \eqref{eq:linear_operator} with kernels satisfying \eqref{eq:ellipticity_FL}, but not necessarily \eqref{eq:class_L2}.

Schauder estimates have been established for linear integro-differential operators \cite{Bas09, DK13, ROS14, BK15} and Bellman type integro-differential operators \cite{Ser15a, JX16} in different contexts. In \cite{Ser15a}, it was shown that the proof for the Evans–Krylov estimates also works for the equations having $x$ dependence in $C^\alpha$ fashion. He proved that if $u \in C^{\sigma+\alpha}(B_1) \cap C^\alpha (\Rn)$ is a solution to a non-translation invariant equation
\begin{equation} \label{eq:Bellman_operator}
\inf_{a \in \mathcal{A}} \left( \int_\Rn (u(x+y) + u(x-y) - 2u(x)) K_a(x, y) dy + c_a (x) \right) = 0 \quad\text{in}~ B_1,
\end{equation}
where $\mathcal{A}$ is some index set, $K_a$ are kernels satisfying \eqref{eq:ellipticity_FL} and 
\begin{equation} \label{eq:K_Holder}
\int_{B_{2r} \setminus B_r} \ve K_a(x, y) - K_a(x', y) \ve dy \leq A_0 \ve x-x' \ve^\alpha \frac{2-\sigma}{r^\sigma} \quad\text{for all}~ x, x' \in \Rn, r > 0,
\end{equation}
and $c_a$ are functions with $\Ve c_a \Ve_{C^\alpha(\overline{B_1})} \leq C_0$, then $\Ve u \Ve_{C^{\sigma+\alpha}(\overline{B_{1/2}})} \leq C(C_0 + \Ve u \Ve_{C^\alpha(\Rn)})$. Moreover, it was proved that if the kernels $K_a$ additionally satisfy
\begin{equation} \label{eq:class_Lalpha}
[K_a(x, \cdot)]_{C^\alpha(\Rn \setminus B_r)} \leq \Lambda(2-\sigma) r^{-n-\sigma-\alpha} \quad\text{for all}~ r > 0,
\end{equation}
then the uniform estimate $\Ve u \Ve_{C^{\sigma+\alpha}(\overline{B_{1/2}})} \leq C(C_0 + \Ve u \Ve_{L^\infty(\Rn)})$ holds for merely bounded solutions. The subclass of $\mathcal{L}_0$ consisting of linear operators whose kernels satisfy \eqref{eq:class_Lalpha} is denoted by $\mathcal{L}_\alpha$.

On the other hand, the Schauder type $C^{\sigma+\alpha}$ estimate was also established independently by Jin and Xiong \cite{JX16}. They proved that bounded solutions to Bellman type equations, with smooth kernels in $\mathcal{L}_2$ and $C^\alpha$ dependence on $x$, have uniform estimates. The assumption $\mathcal{L}_2$ is stronger than $\mathcal{L}_\alpha$, but their proof is very different from the proof of Serra.

All the aforementioned results concerning the Evans–Krylov and the Schauder estimates are dealt with integro-differential equations for a fixed order of differentiability $\sigma \in (0,2)$, except for \cite{BK15}, where the Schauder theory for linear integro-differential operators of variable orders is obtained through the potential theory. The aim of this paper is to establish the generalized Evans–Krylov and Schauder $C^{\varphi\psi}$ interior estimates for general nonlocal fully nonlinear equations with rough kernels of variable orders. Our proofs are significantly different from the proof in \cite{BK15} because the equations we consider are nonlinear. Moreover, as we mentioned before, we do not restrict ourselves to the Bellman type operators \eqref{eq:Bellman_operator} and provide the regularity results in full generality without assuming an explicit form of operators.

\subsection{Nonlocal operators of variable orders}

We are mainly concerned with nonlocal fully nonlinear equations with rough kernels of variable orders. The definitions of these notions are made precise in this section.

We say that a function $\varphi : (0, +\infty) \to (0, +\infty)$ satisfies the {\it weak scaling property} with constants $0 < \sigma_1 \leq \sigma_2 < 2$ and $a \geq 1$, if
\begin{equation} \label{a:WS}
a^{-1} \left( \frac{R}{r} \right)^{\sigma_1} \leq \frac{\varphi(R)}{\varphi(r)} \leq a \left( \frac{R}{r} \right)^{\sigma_2} \quad\text{for all}~ 0 < r \leq R.
\end{equation}
Notice that $\varphi$ generalizes the polynomial $r^\sigma$, $\sigma \in (0,2)$. Since $\varphi$ is allowed to oscillate between two functions $r^{\sigma_1}$ and $r^{\sigma_2}$, it is referred to as {\it variable orders}. Throughout the paper, except for \Cref{section:WS}, we always assume that $\varphi$ is a function satisfying $\varphi(1) = 1$, the weak scaling property \eqref{a:WS}, and
\begin{equation} \label{a:varphi}
r \varphi'(r) \leq C \varphi(r),
\end{equation}
and that the function $\phi$ defined by $\phi(r) = \varphi(r^{-1/2})^{-1}$ is a Bernstein function, i.e., $\phi$ satisfies $\phi(r) \geq 0$ and $(-1)^k \phi^{(k)}(r) \leq 0$ for every $k \in \mathbb{N}$. The last assumption is for a later use of results in \cite{BK15}. Here are some examples of Bernstein functions satisfying the above assumptions.

\begin{example}
\begin{enumerate} [(i)]
\item
$\phi(r) = r^{\sigma/2}$, $\sigma \in (0,2)$. In this case, $\sigma_1 = \sigma_2 = \sigma$.
\item
$\phi(r) = r^{\sigma_1/2} + r^{\sigma_2/2}$, $0 < \sigma_1 \leq \sigma_2 < 2$.
\item
$\phi(r) = (r+m^{2/\sigma})^{\sigma/2} - m$, $\sigma \in (0,2)$, $m \geq 0$. In this case, $\sigma_1 = \sigma$ and $\sigma_2 = 1$.
\item
$\phi(r) = r^{\sigma_1/2} (\log (1+r))^{\sigma_2/2-\sigma_1/2}$ for $\sigma_1, \sigma_2, \sigma_2-\sigma_1 \in (0,2)$.
\item
$\phi(r) = r^{\sigma_2/2} (\log(1+r))^{\sigma_1/2-\sigma_2/2}$, $0 < \sigma_1 \leq \sigma_2 < 2$.
\end{enumerate}
\end{example}

We consider a class $\mathcal{L}$ of linear integro-differential operators of the form \eqref{eq:linear_operator}. Using the extremal operator defined by
\begin{equation*}
\M^+_{\mathcal{L}} u(x) = \sup_{L \in \mathcal{L}} Lu(x) \quad\text{and}\quad \M^-_{\mathcal{L}} u(x) = \inf_{L \in \mathcal{L}} Lu(x),
\end{equation*}
we are going to impose the ellipticity to a nonlocal fully nonlinear operator in the standard way.

\begin{definition} \label{def:ellipticity}
Let $\mathcal{L}$ be a class of linear integro-differential operators. An operator $\I$ is said to be {\it elliptic with respect to the class $\mathcal{L}$} if
\begin{equation*}
\M^-_{\mathcal{L}} (u-v) (x) \leq \I(u, x) - \I(v, x) \leq \M^+_{\mathcal{L}} (u-v)(x).
\end{equation*}
\end{definition}

The operator $\I$ we consider in this paper may be translation invariant or non-translation invariant. In the former case, we write $\I u(x)$ instead of $\I (u, x)$.

We consider, in particular, the class $\mathcal{L}_0(\varphi)$ of linear integro-differential operators of the form \eqref{eq:linear_operator}, where kernels $K$ satisfy
\begin{equation} \label{a:ellipticity}
\lambda \frac{c_\varphi}{\ve y \ve^n \varphi(\ve y \ve)} \leq K(y) \leq \Lambda \frac{c_\varphi}{\ve y \ve^n \varphi(\ve y \ve)}
\end{equation}
with ellipticity constants $0 < \lambda \leq \Lambda$. The constant $c_\varphi$ in \eqref{a:ellipticity} is given by
\begin{equation} \label{eq:c_varphi}
c_\varphi = \left( \int_0^1 \frac{r}{\varphi(r)} dr \right)^{-1},
\end{equation}
and it plays an important role in the uniform estimates as $2-\sigma$ does for the case of the fractional Laplacian. Notice that $c_\varphi = 2-\sigma$ when $\varphi(r) = r^\sigma$. In \cite{KL20}, the authors used a constant
\begin{equation} \label{eq:full_constant}
\left( \int_\Rn \frac{1-\cos y_1}{\ve y \ve^n \varphi(\ve y \ve)} dy \right)^{-1},
\end{equation}
which corresponds to the full constant $C(n, \sigma)$ of the fractional Laplacian, instead of the constant \eqref{eq:c_varphi}, for the Krylov–Safonov theory. However, since we are not interested in the limit behavior $\sigma \to 0$, it is enough to consider \eqref{eq:c_varphi} instead of \eqref{eq:full_constant}, and the use of the constant \eqref{eq:c_varphi} will shorten the proofs. With respect to the class $\mathcal{L}_0(\varphi)$, the extremal operators have the explicit form
\begin{align*}
\M^+_{\mathcal{L}_0(\varphi)} u(x) 
&= \int_\Rn \left( \Lambda \delta_+(u, x, y) - \lambda \delta_-(u, x, y) \right) \frac{c_\varphi}{\ve y \ve^n \varphi(\ve y \ve)} dy \quad\text{and} \\
\M^-_{\mathcal{L}_0(\varphi)} u(x) 
&= \int_\Rn \left( \Lambda \delta_-(u, x, y) - \lambda \delta_+(u, x, y) \right) \frac{c_\varphi}{\ve y \ve^n \varphi(\ve y \ve)} dy,
\end{align*}
where $\delta(u, x, y) = u(x+y) + u(x-y) - 2u(x)$ is the second order incremental quotients.

The operator we are going to consider in this paper is two-fold. For the Evans–Krylov type estimates, we will assume that $\I$ is a concave translation invariant elliptic operator with respect to the class $\mathcal{L}_0(\varphi)$. A typical example is the Bellman type operator, but a novelty of this work with respect to \cite{Ser15a} and \cite{JX16} is that the proof does not rely on an explicit form of the operator.

For the Schauder type estimates, we will consider more general operators $\I (u,x)$ which are not necessarily translation invariant. The standard assumptions we need to impose on $\I$ are that $\I$ has an $x$ dependence in H\"older fashion for the ``freezing coefficients" step and that the model equation obtained by freezing coefficients has an appropriate regularity estimate. Recall that, in \cite{Ser15a} and \cite{JX16}, the $C^\alpha$ dependence \eqref{eq:K_Holder} in $x$ variable is imposed to kernels of the operator. However, since we do not assume the explicit form of the operator, the H\"older dependence in $x$ variable must be imposed directly to the operator (see \eqref{a:Holder}). Moreover, the $x$ dependence of equation will be given in a generalized H\"older fashion.

\subsection{Main results}

In order to state the main results, we briefly discuss the concept of order of differentiability. Throughout the paper, we will always assume that $\psi : (0, +\infty) \to (0, +\infty)$ is a function satisfying
\begin{equation} \label{a:psi}
\psi(1) = 1 \quad\text{and}\quad \lim_{r \to +0} \psi(r) = 0.
\end{equation}
We say that $\psi$ is {\it almost increasing} if there is a constant $c \in (0,1]$ such that $c \psi(r) \leq \psi(R)$ for all $r \leq R$. Similarly, we say that $\psi$ is {\it almost decreasing} if there is $C \in [1, \infty)$ such that $\psi(R) \leq C \psi(r)$ for all $r \leq R$. 
We adopt, from \cite{BK15}, the definition of indices $M_\psi$ and $m_\psi$, which is given by
\begin{align} \label{eq:index_psi}
\begin{split}
M_\psi &= \inf \lb \alpha \in \mathbb{R} : r \mapsto r^{-\alpha} \psi(r) ~\text{is almost decreasing} \rb, \\
m_\psi &= \sup \lb \alpha \in \mathbb{R} : r \mapsto r^{-\alpha} \psi(r) ~\text{is almost increasing} \rb,
\end{split}
\end{align}
and denote by $I_\psi$ the closed interval $[m_\psi, M_\psi]$. The interval $I_\psi$ describes the range of orders of differentiability induced by $\psi$. Note that if $\psi(r) = r^\alpha$ or $\psi(r) = r^\alpha \ve \log(2/r) \ve$, then $M_\psi = m_\psi = \alpha$, and if $\psi(r) = r^\alpha+r^\beta$, then $M_\psi = \max \lbrace \alpha, \beta \rbrace$ and $m_\psi = \min \lbrace \alpha, \beta \rbrace$. We also observe that for the function $\varphi$ satisfying the weak scaling property \eqref{a:WS}, we have $I_\varphi \subset [\sigma_1, \sigma_2]$. We may and do assume that $I_\varphi = [\sigma_1, \sigma_2]$ by considering the largest $\sigma_1$ and the smallest $\sigma_2$ such that \eqref{a:WS} holds.

The first main result is the Evans–Krylov type $C^{\varphi \psi}$ interior estimates for concave translation invariant nonlocal fully nonlinear equations with rough kernels of variable orders. The precise definition of generalized H\"older spaces such as $C^{\varphi \psi}$ or $C^\psi$ will be given in \Cref{section:generalized_Holder_space}, but the basic idea is that the modulus of continuity $r^{\sigma+\alpha}$ or $r^\alpha$ for the H\"older spaces $C^{\sigma+\alpha}$ or $C^\alpha$ are replaced by $\varphi(r) \psi(r)$ or $\psi(r)$, respectively.

\begin{theorem} \label{thm:Evans-Krylov}
Let $0 < \lambda \leq \Lambda$, $a \geq 1$, and $\sigma_0 \in (0,2)$. There is a universal constant $\bar{\alpha} \in (0,1)$, depending only on $n$, $\lambda$, $\Lambda$, $a$, and $\sigma_0$, such that the following statement holds: let $\I$ be a concave translation invariant elliptic operator with respect to $\mathcal{L}_0(\varphi)$, and assume
\begin{equation} \label{a:index}
I_\varphi \subset [\sigma_0, 2),
I_\psi \subset (0, \bar{\alpha}),
I_{\varphi\psi} \cap \mathbb{N} = \emptyset,
m_\varphi + \bar{\alpha} \notin \mathbb{N}, ~\text{and}~ 
\lfloor m_\varphi + \bar{\alpha} \rfloor = \lfloor m_{\varphi \psi} \rfloor.
\end{equation}
If $u \in C^{\varphi \psi}(B_1) \cap C^\psi(\Rn)$ and $f \in C^\psi(\overline{B_1})$ satisfy $\I u = f$ in $B_1$, then
\begin{equation*}
\Ve u \Ve_{C^{\varphi \psi}(\overline{B_{1/2}})} \leq C \left( \Ve u \Ve_{C^\psi (\Rn)} + \Ve f \Ve_{C^\psi(\overline{B_1})} \right),
\end{equation*}
where $C$ is a universal constant depending only on $n, \lambda, \Lambda, a, \sigma_0, \psi$, and $m_{\varphi \psi} - \lfloor m_{\varphi \psi} \rfloor$.
\end{theorem}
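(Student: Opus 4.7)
The plan is to adapt the second-difference approach of Caffarelli--Silvestre and Serra to the variable-order setting. After reducing by linearity and rescaling to the normalized case $\Ve u\Ve_{C^\psi(\Rn)}+\Ve f\Ve_{C^\psi(\overline{B_1})}\le 1$, I would fix $h\in\Rn$ small and consider $w_h(x)=\delta(u,x,h)$. The concavity and translation invariance of $\I$ give
\[
\I\!\left[\tfrac{1}{2}u(\cdot+h)+\tfrac{1}{2}u(\cdot-h)\right](x)\;\ge\;\tfrac{1}{2}\bigl(f(x+h)+f(x-h)\bigr),
\]
and combining with $\I u=f$ and the upper ellipticity bound of \Cref{def:ellipticity} yields
\[
\M^+_{\mathcal{L}_0(\varphi)}w_h(x)\;\ge\;\delta(f,x,h)\quad\text{in }B_{1-|h|},
\]
while both $\Ve w_h\Ve_{L^\infty(\Rn)}$ and $\Ve \delta(f,\cdot,h)\Ve_{L^\infty(B_{1-|h|})}$ are bounded by $2\psi(|h|)$ from the $C^\psi$ hypotheses on $u$ and $f$.

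The next ingredient is the Krylov--Safonov type Hölder estimate for the extremal operator $\M^+_{\mathcal{L}_0(\varphi)}$ proved in \cite{KL20}, which supplies a universal exponent $\bar\alpha=\bar\alpha(n,\lambda,\Lambda,a,\sigma_0)\in(0,1)$ and the bound
\[
[w_h]_{C^{\bar\alpha}(\overline{B_{3/4}})}\;\le\;C\psi(|h|).
\]
The assumption $I_\psi\subset(0,\bar\alpha)$ ensures that $\psi$ sits within the range accessible to this Hölder estimate. The core of the proof is then a Campanato-style iteration centered at any $x_0\in B_{1/2}$: I would construct polynomials $P_k$ of degree $N=\lfloor m_{\varphi\psi}\rfloor$ satisfying
\[
\sup_{B_{\rho^k}(x_0)}\bigl|u(x)-P_k(x-x_0)\bigr|\;\le\;\varphi(\rho^k)\psi(\rho^k)
\]
for a universal small $\rho\in(0,1)$ and all $k\ge 0$. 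The inductive step proceeds by rescaling the remainder $\tilde u_k(y)=(\varphi(\rho^k)\psi(\rho^k))^{-1}(u-P_k)(x_0+\rho^k y)$, which solves a concave equation for a rescaled operator elliptic with respect to $\mathcal{L}_0(\tilde\varphi_k)$ with $\tilde\varphi_k(r)=\varphi(\rho^k r)/\varphi(\rho^k)$ inheriting \eqref{a:WS} with universal constants. Applying the second-difference subsolution argument and the Hölder estimate to $\tilde u_k$ at unit scale yields the next polynomial $P_{k+1}$ and closes the iteration, after which $u\in C^{\varphi\psi}$ follows from a standard Campanato characterization for the generalized Hölder spaces introduced in \Cref{section:generalized_Holder_space}.

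The principal obstacle is the simultaneous bookkeeping of polynomial degrees, exponents, and nonlocal tails under the variable-order scaling. Morally, the second-difference/$C^{\bar\alpha}$ mechanism yields for $u$ an expansion of regularity $\varphi(r)\cdot r^{\bar\alpha}$, whose natural polynomial degree is $\lfloor m_\varphi+\bar\alpha\rfloor$; the hypothesis $\lfloor m_\varphi+\bar\alpha\rfloor=\lfloor m_{\varphi\psi}\rfloor$ forces this to coincide with the target degree $N$, while $I_{\varphi\psi}\cap\mathbb{N}=\emptyset$ and $m_\varphi+\bar\alpha\notin\mathbb{N}$ prevent logarithmic corrections when crossing integer thresholds. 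A second difficulty is that after rescaling, $\tilde u_k$ is no longer automatically bounded outside $B_1$: the exterior growth, controlled by the geometric ratios $\varphi(\rho^k)\psi(\rho^k)/[\varphi(\rho^j)\psi(\rho^j)]$ for $j<k$, must be absorbed into the right-hand side of the extremal inequality as a tail error term. The weak scaling property \eqref{a:WS} together with $I_\psi\subset(0,\bar\alpha)$ ensures that these tail terms sum geometrically and remain universally small, thereby closing the induction and completing the argument.
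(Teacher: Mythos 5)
The second-difference route you propose (consider $w_h = \delta(u,\cdot,h)$, derive a subsolution inequality from concavity and translation invariance, then iterate at dyadic scales) is genuinely different from the paper's argument, which proves a Liouville theorem for entire solutions and deduces the estimate via a blow-up compactness contradiction (Theorem~\ref{thm:Liouville} and Proposition~\ref{prop:EK_intermediate}). Your setup for $w_h$ is correct: concavity of $\I$ plus translation invariance gives $\M^+_{\mathcal{L}_0(\varphi)} w_h \ge \delta(f,\cdot,h)$ in $B_{1-\lv h\rv}$, and the $C^\psi$ hypotheses give $\lV w_h \rV_{L^\infty(\Rn)} \le C\psi(\lv h\rv)$.

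However, the step
\[
[w_h]_{C^{\bar\alpha}(\overline{B_{3/4}})}\;\le\;C\psi(\lv h\rv)
\]
does not follow from the Krylov--Safonov H\"older estimate, and this gap is exactly the core difficulty of the Evans--Krylov theorem. The interior $C^{\bar\alpha}$ estimate of \cite{KL20} requires $w_h$ to satisfy both $\M^+_{\mathcal{L}_0(\varphi)} w_h \ge -C_0$ and $\M^-_{\mathcal{L}_0(\varphi)} w_h \le C_0$, i.e.\ membership in the two-sided viscosity class. Concavity only produces the first inequality. The second difference $w_h = (u(\cdot+h)-u) + (u(\cdot-h)-u)$ yields, via the ellipticity bounds $\M^-(u(\cdot \pm h)-u) \le f(\cdot \pm h)-f \le \M^+(u(\cdot\pm h)-u)$, only the subadditivity $\M^- w_h \le \M^-(u(\cdot+h)-u) + \M^+(u(\cdot-h)-u)$, whose right-hand side is bounded from \emph{below} by $\delta(f,\cdot,h)$ but has no upper bound. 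So the two-sided hypothesis simply is not available, and the H\"older estimate cannot be invoked for $w_h$. What replaces it in Caffarelli--Silvestre, Serra, and in this paper's Lemma~\ref{lem:PN} is a substantially more delicate measure-theoretic argument: one works with the positive and negative parts $P$, $N$ of the second difference of $L_\varphi u$, uses the two-sided ellipticity on the \emph{first}-order increments $u(\cdot + h)-u$ (assumption~(ii) of Theorem~\ref{thm:Liouville}) to establish that $P$ and $N$ are comparable, then runs a covering/density iteration built on the half Harnack (Theorem~\ref{thm:local_boundedness}) and the weak Harnack inequality to obtain $C^{\bar\alpha}$ decay of $L_\varphi u$, and finally invokes potential theory to upgrade that to $C^{\varphi+\bar\alpha}$ regularity for $u$. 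Your proposal omits all of this, so as written the Campanato iteration has no engine. A secondary issue: even with a correct decay for $w_h$ in hand, the Campanato closure in the $\varphi\psi$ modulus of continuity with polynomials of degree $\lfloor m_{\varphi\psi}\rfloor$ still needs the potential-theory step to transfer information about $L_\varphi u$ back to $u$; this transfer is not addressed in the proposal either.
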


The non-integer assumptions in \eqref{a:index} are common and inevitable because of the well known technical difficulty arising from the H\"older spaces.

The universal constants in \Cref{thm:Evans-Krylov} and upcoming theorems stay uniform as $\sigma_1$ and $\sigma_2$ approach to 2 because constants depend on $\sigma_0$, but not $\sigma_1$ nor $\sigma_2$. This implies that our results recover the classical Evans–Krylov and Schauder theory for second order fully nonlinear equations as limits. Notice that in this case the dependence on $m_{\varphi\psi} - \lfloor m_{\varphi \psi} \rfloor$ is absorbed into the dependence on $\psi$. Our main theorems provide new results even in the case of second order fully nonlinear equations since the data is given in the generalized H\"older sense. Moreover, the results are also new in the case of the fractional Laplacian type equations ($\varphi(r) = r^\sigma$) because we do not restrict ourselves to Bellman type operators.

The next result is the Schauder type $C^{\varphi\psi}$ estimates for non-translation invariant fully nonlinear equations with rough kernels of variable orders. As we mentioned in the previous section, we need to impose $C^\psi$ dependence in $x$ variable directly to the operator $\I$. For this purpose, we consider, for a fixed point $z$, the function
\begin{equation*}
\beta_{\I}(x, x') = \sup \frac{\ve \I(u, x) - \I(u, x') \ve}{\Ve u \Ve'_{C^{\varphi\psi}(\overline{B_r(z)})} + \Ve u \Ve_{L^\infty(\Rn)}}, \quad x, x' \in \overline{B_r(z)},
\end{equation*}
where the supremum is taken over the space of all nontrivial functions with $\Ve u \Ve'_{C^{\varphi\psi}(\overline{B_r(z)})} + \Ve u \Ve_{L^\infty(\Rn)} < +\infty$. The function $\beta_{\I}$ is a nonlocal analogue of the function $\beta_F$ in \cite{CC95}, and it measures the oscillation of $\I$ in the $x$ variable. We define $\I_z$ by $\I_z u(x) := \I(\tau_{x-z} u, z)$, where $\tau_z u(x) = u(x+z)$, which is an operator obtained by freezing coefficients of the operator $\I$. Note that $\I_z$ is translation invariant since $\I_z \tau_w u(x) = \I (\tau_{x-z} \tau_w u, z) = \I (\tau_{x+w-z} u, z) = \I_z u(x+w)$. The closedness of the operators $\I$ and $\I_z$ in the $C^\psi$ fashion is given by
\begin{equation} \label{a:Holder}
\beta_{\I - \I_z} (x, x') \leq A_0 \psi(\ve x-x' \ve) \quad \forall x, x' \in B_r(z), \quad\text{for every ball}~ B_r(z) \subset B_1.
\end{equation}
Notice that \eqref{a:Holder} corresponds to $[a_{ij}(\cdot) - a_{ij}(z)]_{C^\psi(\overline{B_r(z)})} \leq A_0$, or equivalently, $[a_{ij}]_{C^\psi(\overline{B_r(z)})} \leq A_0$, in the case of second order linear operator in a non-divergence form.

Another assumption we need is the regularity estimates for the model equations. We say that $\I_z$ satisfies the {\it Evans–Krylov type estimates in $B_r=B_r(z)$} if, for given $\alpha \in (0, m_\psi)$ and given functions $f \in C^\psi(\overline{B_r})$ and $v \in C^{\varphi \psi}(B_r) \cap C^\psi(\Rn)$, $u \in C^{\varphi+\alpha}(\Rn)$ solves the equation $\I_z (u+v) = f$ in $B_r$, then $u \in C^{\varphi\psi}(\overline{B_{r/2}})$ and
\begin{equation} \label{eq:EK_estimate}
[u]_{C^{\varphi \psi}(\overline{B_{r/2}})} \leq C \left( \Ve u \Ve_{C^{\varphi+\alpha}(\Rn)} + [f]_{C^\psi(\overline{B_r})} + \sup_{L \in \mathcal{L}} [Lv]_{C^\psi(\overline{B_r})} \right)
\end{equation}
for some universal constant $C$. The class $\mathcal{L}$ in \eqref{eq:EK_estimate} is $\mathcal{L}_0(\varphi)$ or $\mathcal{L}_\psi(\varphi)$ according to the operator $\I$, i.e., if $\I$ is elliptic with respect to $\mathcal{L}_0(\varphi)$ or $\mathcal{L}_\psi(\varphi)$, then $\mathcal{L} = \mathcal{L}_0(\varphi)$ or $\mathcal{L} = \mathcal{L}_\psi(\varphi)$, respectively. Here, the space $C^{\varphi+\alpha}$ is the generalized H\"older space with the modulus of continuity $\varphi(r) r^\alpha$ (see \Cref{section:generalized_Holder_space}).

We point out that we say that $\I_z$ satisfies the Evans–Krylov type estimate if solutions enjoy \eqref{eq:EK_estimate}, not the estimate in \Cref{thm:Evans-Krylov}. This is because the estimate of the form \eqref{eq:EK_estimate} is useful for later uses in the following theorems, as well as it implies the estimate in \Cref{thm:Evans-Krylov} (see \Cref{prop:EK_intermediate}). The concave non-translation invariant elliptic operators are, of course, examples of operators satisfying the Evans–Krylov type estimates.

\begin{theorem} \label{thm:Schauder}
Let $\bar{\alpha}$ be the constant in \Cref{thm:Evans-Krylov}, and assume \eqref{a:index}.  Let $\I$ be a non-translation invariant operator which is elliptic with respect to $\mathcal{L}_0(\varphi)$ and satisfies \eqref{a:Holder}. Suppose that $\I_z$ satisfies the Evans–Krylov type estimates in $B_r(z)$ for every ball $B_r(z) \subset B_1$. If $u \in C^{\varphi\psi}(B_1) \cap C^\psi(\Rn)$ and $f \in C^\psi(\overline{B_1})$ satisfy $\I(u, x) = f(x)$ in $B_1$, then
\begin{equation*}
\Ve u \Ve_{C^{\varphi\psi}(\overline{B_{1/2}})} \leq C \left( \Ve u \Ve_{C^\psi(\Rn)} + \Ve f \Ve_{C^\psi(\overline{B_1})} \right),
\end{equation*}
where $C$ is a universal constant depending only on $n, \lambda, \Lambda, a, \sigma_0, A_0, \psi$, and $m_{\varphi \psi} - \lfloor m_{\varphi \psi} \rfloor$.
\end{theorem}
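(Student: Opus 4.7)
The plan is to follow the classical freezing coefficients scheme, reducing the non-translation invariant problem to the translation invariant case handled by the Evans--Krylov type estimate \eqref{eq:EK_estimate}. It suffices to show, for any $z \in B_{1/2}$ and all sufficiently small $r$ with $B_r(z) \subset B_1$, a decay-type bound of the form
\[
[u]_{C^{\varphi\psi}(\overline{B_{r/2}(z)})} \leq \varepsilon\,\Ve u \Ve'_{C^{\varphi\psi}(\overline{B_r(z)})} + C_\varepsilon \bigl(\Ve u \Ve_{C^\psi(\Rn)} + \Ve f \Ve_{C^\psi(\overline{B_1})}\bigr),
\]
then absorb the first term on the left via the standard iteration/absorption lemma and cover $\overline{B_{1/2}}$ by finitely many such balls.

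First I would freeze coefficients at $z$ and rewrite the equation on $B_r(z)$ as $\I_z u(x) = f(x) + g_z(x)$, where $g_z(x) := \I_z u(x) - \I(u,x)$. The closedness hypothesis \eqref{a:Holder} yields immediately
\[
[g_z]_{C^\psi(\overline{B_r(z)})} \leq A_0 \bigl(\Ve u \Ve'_{C^{\varphi\psi}(\overline{B_r(z)})} + \Ve u \Ve_{L^\infty(\Rn)}\bigr),
\]
so $\I_z u = f + g_z$ is a translation invariant equation with $C^\psi$ right-hand side, the seminorm of which depends linearly on the seminorm we intend to absorb. Next I would split $u = u_1 + u_2$, with $u_1$ obtained by subtracting from $u$ a Taylor-type polynomial at $z$ of degree $\lfloor m_\varphi + \bar{\alpha}\rfloor = \lfloor m_{\varphi\psi}\rfloor$ and then multiplying by a cutoff at scale $r$, so that $u_1 \in C^{\varphi+\bar{\alpha}}(\Rn)$ has controlled norm while $u_2 \in C^{\varphi\psi}(B_r(z)) \cap C^\psi(\Rn)$ carries the far-field tail. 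Applying \eqref{eq:EK_estimate} to $\I_z(u_1+u_2) = f+g_z$ then produces a bound of $[u]_{C^{\varphi\psi}(\overline{B_{r/2}(z)})}$ in terms of $\Ve u_1 \Ve_{C^{\varphi+\bar{\alpha}}(\Rn)}$, $[f]_{C^\psi}$, $[g_z]_{C^\psi}$, and $\sup_{L \in \mathcal{L}_0(\varphi)} [L u_2]_{C^\psi(\overline{B_r(z)})}$.

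The last ingredient is generalized Hölder interpolation between $C^\psi(\Rn)$ and $C^{\varphi\psi}(B_r(z))$, carried out using the weak scaling \eqref{a:WS} and the non-integer conditions in \eqref{a:index}. This interpolation bounds the polynomial and tail pieces $\Ve u_1 \Ve_{C^{\varphi+\bar{\alpha}}(\Rn)}$ and $\sup_L [L u_2]_{C^\psi(\overline{B_r(z)})}$ by $\varepsilon [u]_{C^{\varphi\psi}(\overline{B_r(z)})} + C_\varepsilon \Ve u \Ve_{C^\psi(\Rn)}$ for any prescribed $\varepsilon > 0$. The term $[g_z]_{C^\psi}$ is absorbed for $r$ small thanks to the first step, after which the absorption lemma closes the desired estimate.

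The main obstacle I anticipate is the control of the nonlocal tail term $\sup_{L}[L u_2]_{C^\psi(\overline{B_r(z)})}$, which couples the Hölder behavior of $u$ inside $B_r(z)$ with its global tail through the kernel of $L$. Keeping the constants uniform in $\sigma_0$ as $\sigma_1,\sigma_2 \to 2$ requires a quantitative use of \eqref{a:WS} together with the matching of integer parts $\lfloor m_\varphi+\bar{\alpha}\rfloor = \lfloor m_{\varphi\psi}\rfloor$ from \eqref{a:index}; without this matching the Taylor polynomial subtracted off to form $u_1$ has the wrong degree and the interpolation produces an unabsorbable remainder. A secondary subtlety is that the error $g_z$ from freezing at $z$ must be quantitatively small at scale $r$, which crucially relies on the $\Ve u \Ve'_{C^{\varphi\psi}(\overline{B_r(z)})}$ (rather than the full $C^{\varphi\psi}$ norm) appearing on the right of \eqref{a:Holder}, so that the absorption step is truly gaining a factor of $\varepsilon$.
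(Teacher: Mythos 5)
Your proposal follows the same freezing--coefficients strategy as the paper, organized around the same three ingredients: (a) use \eqref{a:Holder} to regard $\I_z u = f + g_z$ as a translation-invariant equation with $C^\psi$ right-hand side whose seminorm is proportional to $\Ve u\Ve'_{\varphi\psi}$ on the ball; (b) split $u$ into a compactly supported piece admissible in \eqref{eq:EK_estimate} and a tail; (c) absorb using the fact that the non-dimensional norm $\Ve\cdot\Ve'$ carries a $\varphi\psi$ factor at scale $r$. The paper packages step (a)--(b) into \Cref{prop:Sch_intermediate}, rescales $B_\rho(z)$ to $B_{1/2}$ so that the effective Hölder constant becomes $\bar A_0 = A_0\psi(\rho)$, and uses the interior-norm absorption lemma; you work directly at scale $r$ and iterate, which is the same mechanism after unwinding \Cref{lem:rescaled_adimnorm}. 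The one cosmetic difference is your Taylor-polynomial subtraction inside the cutoff: the paper simply writes $\bar u = \eta\bar u + (1-\eta)\bar u$, noting that $u\in C^{\varphi\psi}(B_1)\cap C^\psi(\Rn)$ already forces the cutoff piece into $C^{\bar\varphi+\alpha}(\Rn)$ since it has compact support; subtracting a degree-$\lfloor m_{\varphi\psi}\rfloor$ polynomial first is harmless (it does not change the top seminorm) but is not needed. Your remark that the assumption $\lfloor m_\varphi+\bar\alpha\rfloor = \lfloor m_{\varphi\psi}\rfloor$ and the appearance of the non-dimensional norm in $\beta_{\I}$ are exactly what make the absorption quantitative is precisely what the paper exploits when passing from \eqref{a:Holder} to $\bar A_0 = A_0\psi(\rho)\le\varepsilon_0$ (together with $\varphi(\rho)c_{\bar\varphi}/c_\varphi\le 1$ from the Bernstein assumption). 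So the proposal is correct and essentially reproduces the paper's argument, with the polynomial subtraction as an optional variant of the cutoff decomposition.
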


We will see, in \Cref{section:counterexample}, that the uniform estimates in \Cref{thm:Evans-Krylov} and \Cref{thm:Schauder} would be false if solutions $u$ are assumed to be merely bounded, as in \cite{Ser15a}. However, if the operator $\I$ is elliptic with respect to the subclass $\mathcal{L}_\psi(\varphi) \subset \mathcal{L}_0(\varphi)$ of linear operators whose kernels satisfy
\begin{equation} \label{a:L_psi}
[K]_{C^\psi(\Rn \setminus B_r)} \leq \Lambda \frac{c_\varphi}{r^n \varphi(r) \psi(r)} \quad\text{for all}~ r > 0,
\end{equation}
then we obtain $C^{\varphi\psi}$ uniform estimates for merely bounded solutions. Notice that the condition \eqref{a:L_psi} generalizes \eqref{eq:class_Lalpha}.

\begin{theorem} \label{thm:Schauder_bdd_data}
Suppose that $\I$ is elliptic with respect to $\mathcal{L}_\psi(\varphi)$ and satisfies the same assumptions as in \Cref{thm:Schauder}. If $u \in C^{\varphi\psi}(B_1) \cap L^\infty(\Rn)$ and $f \in C^\psi(\overline{B_1})$ satisfy $\I(u, x) = f(x)$ in $B_1$, then
\begin{equation*}
\Ve u \Ve_{C^{\varphi\psi}(\overline{B_{1/2}})} \leq C \left( \Ve u \Ve_{L^\infty(\Rn)} + \Ve f \Ve_{C^\psi(\overline{B_1})} \right),
\end{equation*}
where $C$ is a universal constant depending only on $n, \lambda, \Lambda, a, \sigma_0, A_0, \psi$, and $m_{\varphi \psi} - \lfloor m_{\varphi \psi} \rfloor$.
\end{theorem}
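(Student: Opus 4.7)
The plan is to reduce Theorem~\ref{thm:Schauder_bdd_data} to Theorem~\ref{thm:Schauder} by upgrading the hypothesis $u \in L^\infty(\Rn)$ to $u \in C^\psi(\Rn)$. This upgrade is made possible precisely by the improved regularity of kernels in the smaller class $\mathcal{L}_\psi(\varphi)$ encoded in \eqref{a:L_psi}, which is not available under the weaker hypothesis of Theorem~\ref{thm:Schauder}.

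The first step is a preliminary Hölder estimate. Since $\mathcal{L}_\psi(\varphi) \subset \mathcal{L}_0(\varphi)$, the operator $\I$ remains elliptic with respect to $\mathcal{L}_0(\varphi)$ and satisfies \eqref{a:Holder}, so the nonlocal Krylov--Safonov theory developed in \cite{KL20} yields a universal bound $\|u\|_{C^{\delta_0}(\overline{B_{7/8}})} \leq C(\|u\|_{L^\infty(\Rn)} + \|f\|_{L^\infty(\overline{B_1})})$ for some small $\delta_0 > 0$, which we may assume smaller than $m_\psi$.

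The second and central step is to bootstrap this preliminary Hölder bound to $C^\psi$. For small $h$, I consider the incremental function $w_h(x) = u(x+h) - u(x)$ and aim to show that it satisfies, in a slightly smaller ball, an extremal inequality
\begin{equation*}
\M^-_{\mathcal{L}_\psi(\varphi)} w_h(x) - C \psi(|h|) E \leq 0 \leq \M^+_{\mathcal{L}_\psi(\varphi)} w_h(x) + C \psi(|h|) E,
\end{equation*}
where $E := \|u\|_{L^\infty(\Rn)} + \|f\|_{C^\psi(\overline{B_1})}$. The $\psi(|h|)$-error arises from two sources: (a) the $C^\psi$ $x$-dependence of $\I$ in \eqref{a:Holder} together with $[f]_{C^\psi}$; and, more importantly, (b) for $L \in \mathcal{L}_\psi(\varphi)$, the quantity $L u(x+h) - L u(x)$ is estimated by splitting $\Rn = B_r \cup (\Rn \setminus B_r)$, bounding the near part using $\|u\|_{C^{\delta_0}}$, and transforming the far part by a change of variables so that \eqref{a:L_psi} yields the control by $\psi(|h|) \|u\|_{L^\infty(\Rn)}$ after balancing $r$ against $|h|$ via the weak scaling \eqref{a:WS}. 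Once the extremal inequality is established, rescaling $w_h / \psi(|h|)$ and applying the Krylov--Safonov Hölder estimate on dyadic scales, in the style of Serra \cite{Ser15a}, produces $\|u\|_{C^\psi(\overline{B_{3/4}})} \leq C E$. A cut-off of $u$ outside $B_1$, whose contribution to the equation is again absorbed using \eqref{a:L_psi}, promotes this to $\|u\|_{C^\psi(\Rn)} \leq C E$.

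The third step is then routine: applying Theorem~\ref{thm:Schauder} to $u$ on the ball $B_{3/4}$, using the $C^\psi(\Rn)$ regularity just obtained, delivers
\begin{equation*}
\|u\|_{C^{\varphi\psi}(\overline{B_{1/2}})} \leq C \bigl(\|u\|_{C^\psi(\Rn)} + \|f\|_{C^\psi(\overline{B_1})}\bigr) \leq C \bigl(\|u\|_{L^\infty(\Rn)} + \|f\|_{C^\psi(\overline{B_1})}\bigr).
\end{equation*}
The main obstacle is the kernel-regularity estimate (b) in the second step: converting the Hölder bound \eqref{a:L_psi} on $K$ in $y$, away from the origin, into a $\psi$-Hölder oscillation of $Lu$ in $x$ for merely bounded $u$. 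This requires a careful change-of-variables argument on the far part and an optimal balance between the near and far contributions, a balance that is more delicate here than in the fractional Laplacian setting of \cite{Ser15a} because the scaling is governed by the variable-order function $\varphi$ rather than a single exponent $\sigma$.
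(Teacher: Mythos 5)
Your proposal correctly identifies the central role of the kernel regularity \eqref{a:L_psi} in handling the merely bounded tail, but the overall strategy diverges from the paper's and contains a genuine gap in the final step.

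The paper does not attempt to first upgrade $u$ from $L^\infty(\Rn)$ to $C^\psi$. Instead, it repeats the proof of \Cref{thm:Schauder} verbatim, writing the rescaled equation as $\bar{\I}(\eta\bar{u}+\bar{v},\bar{x})=\bar{f}$ with $\bar{v}=(1-\eta)\bar{u}$ supported outside $B_{3/4}$, and then applies \Cref{prop:Sch_intermediate} with $\mathcal{L}=\mathcal{L}_\psi(\varphi)$. The only new ingredient is the bound $\sup_{L\in\mathcal{L}_\psi(\varphi)}[L\bar{v}]_{\bar{\psi};B_{1/2}} \leq C\Ve\bar{v}\Ve_{L^\infty(\Rn)}$: since $\bar{v}$ vanishes near the evaluation points, one can write $L\bar{v}(\bar{x}+\bar{h})-L\bar{v}(\bar{x})=2\int\bar{v}(\bar{x}+\bar{y})(K(\bar{y}-\bar{h})-K(\bar{y}))\,d\bar{y}$ and let \eqref{a:L_psi} absorb the H\"older increment on the kernel side. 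No preliminary H\"older estimate for $u$, and no bootstrap to $C^\psi$, is ever needed.

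The gap in your Step 3 is substantive. You cannot ``cut off $u$ outside $B_1$'' and then apply \Cref{thm:Schauder} to the truncation $\tilde{u}=\eta u$: the truncation solves $\I(\tilde{u}+v,x)=f(x)$ with $v=(1-\eta)u$ only bounded, not $\I(\tilde{u},x)=\tilde{f}(x)$ for any $\tilde{f}\in C^\psi$. Because $\I$ is fully nonlinear, the discrepancy $\I(\tilde{u},\cdot)-\I(u,\cdot)$ is sandwiched between $\M^\pm(\tilde{u}-u)$ but is not itself a fixed linear operator applied to the tail; estimating its $C^\psi$ seminorm would require controlling the oscillation of a nonlinear difference, which is exactly what the $v$-term in \Cref{prop:Sch_intermediate} is designed to avoid. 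Once you switch to \Cref{prop:Sch_intermediate}, however, the entire bootstrap of Step 2 becomes superfluous, because the quantity that needs controlling is $\sup_L[Lv]_\psi$, not $[u]_{C^\psi(\Rn)}$. There is also a secondary inaccuracy in Step 2: when deriving the extremal inequality for $w_h$, the contribution of the $x$-dependence via \eqref{a:Holder} is bounded by $A_0\psi(\ve h\ve)(\Ve u\Ve'_{\varphi\psi}+\Ve u\Ve_{L^\infty})$, not by $A_0\psi(\ve h\ve)\Ve u\Ve_{L^\infty}$ alone, so your error $C\psi(\ve h\ve)E$ with $E=\Ve u\Ve_{L^\infty}+\Ve f\Ve_{C^\psi}$ is missing the $C^{\varphi\psi}$ term and would require a separate absorption step.
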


The idea of proofs is based on a Liouville type theorem and a compactness argument using blow-up sequences. This argument has been used successfully to establish regularity theory for nonlocal equations. See, for examples, \cite{Ser15b}, \cite{ROS16a}, and \cite{ROS16b}. The argument heavily relies on the scaling invariance of the equations because it allows us to consider blow-up sequences and its limit. However, the kernels of operators in $\mathcal{L}_0(\varphi)$ are not homogeneous and hence our equations are do not have the scaling invariance. Main difficulty arises at this point in the scaling argument. We will see that the rescaled equation and rescaled solution are related to a new scale function, that is, rescaled ones behave differently at each scale. Even though the rescaled equation may be different from the original one, the weak scaling property will make the rescaled equations belong to the same class of equations with different scale functions, but with the same constants $\sigma_1$, $\sigma_2$, and $a$, and the same ellipticity constants $\lambda$ and $\Lambda$. In other words, the weak scaling property makes the rescaling procedure preserve the key features of the equations and solutions.

The paper is organized as follows. In \Cref{section:preliminaries}, we observe how the weak scaling property serves to rescale the equations and solutions. Moreover, the definition and properties of generalized H\"older spaces are provided. In \Cref{section:Liouville}, the Liouville type theorem is stated and proved, which will be the key ingredient of the proof of the Evans–Krylov type theorem. \Cref{section:EK} is devoted to the proof of \Cref{thm:Evans-Krylov}, where the compactness argument with blow-up sequences plays an important role. By using the Evans–Krylov type estimates and by ``freezing coefficients" of the equations, we establish the Schauder type estimates in \Cref{section:Sch}. Both \Cref{thm:Schauder} and \Cref{thm:Schauder_bdd_data} will be proved in this section. We finish the paper with counterexamples to $C^{\varphi \psi}$ interior regularity for merely bounded solutions in \Cref{section:counterexample}.

\section{Preliminaries} \label{section:preliminaries}

\subsection{Weak scaling property} \label{section:WS}

In this section we study how the rescaling procedure works and collect some useful inequalities that will be used frequently in the sequel.

Let $\varphi$ satisfy the weak scaling property \eqref{a:WS} with constant $0 < \sigma_1 \leq \sigma_2 < 2$ and $a \geq 1$. We first observe in the following proposition that if some equation is related to $\varphi$, then a rescaled equation is related to a new scale function $\bar{\varphi}$ which is defined, for given $\rho > 0$, by
\begin{equation} \label{eq:bar_varphi}
\bar{\varphi}(r) = \frac{\varphi(\rho r)}{\varphi(\rho)}.
\end{equation}
This means that the rescaling argument may break since the rescaled equation is not the same with the original equation. However, since $\bar{\varphi}$ satisfies the weak scaling property with the same constants $\sigma_1$, $\sigma_2$, and $a$, the rescaled equation and the original equation are of the same type. Thus, the following proposition can be used in obtaining uniform estimates that depend on the constants $\sigma_1$, $\sigma_2$, and $a$, but not on $\varphi$ itself.

\begin{proposition} \label{prop:rescaled_equation}
If $\I$ is elliptic with respect to $\mathcal{L}_0(\varphi)$, then the operator $\bar{\I}$, defined by
\begin{equation*}
\bar{\I}(\bar{u}, \bar{x}) := \varphi(\rho) \frac{c_{\bar{\varphi}}}{c_\varphi} \I \left( \bar{u}((\cdot - z)/\rho), z+\rho \bar{x} \right),
\end{equation*}
is elliptic with respect to $\mathcal{L}_0(\bar{\varphi})$ with the same ellipticity constants. Moreover, if $u$ is a solution of $\I (u, x) = f(x)$ in $B_\rho(z)$, then the function $\bar{u}$, defined by $\bar{u}(\bar{x}) = u(z+\rho \bar{x})$, solves $\bar{\I} (\bar{u}, \bar{x}) = \bar{f}(\bar{x})$ in $B_1$, where
\begin{equation*}
\bar{f}(\bar{x}) = \varphi(\rho) \frac{c_{\bar{\varphi}}}{c_\varphi} f(z + \rho \bar{x}).
\end{equation*}
\end{proposition}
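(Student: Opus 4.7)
The plan is to verify both claims by a direct computation: the second assertion (about the rescaled solution) is just a substitution, and the first assertion (ellipticity of $\bar\I$) reduces to a change of variables in the extremal operator together with the identity $\varphi(\rho r) = \varphi(\rho)\bar\varphi(r)$, which holds by definition \eqref{eq:bar_varphi}.

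First I would address the easier claim about $\bar u$. Setting $\bar u(\bar x) = u(z+\rho\bar x)$, the function $w(\cdot) := \bar u((\cdot - z)/\rho)$ agrees with $u$ on $\Rn$, because $w(z+\rho\bar x) = \bar u(\bar x) = u(z+\rho\bar x)$. Hence for $\bar x \in B_1$, i.e.\ $z+\rho\bar x \in B_\rho(z)$, the definition of $\bar\I$ gives
\begin{equation*}
\bar\I(\bar u,\bar x) = \varphi(\rho)\frac{c_{\bar\varphi}}{c_\varphi}\,\I(u, z+\rho\bar x) = \varphi(\rho)\frac{c_{\bar\varphi}}{c_\varphi} f(z+\rho\bar x) = \bar f(\bar x),
\end{equation*}
which is exactly the rescaled equation.

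For the ellipticity of $\bar\I$, given $\bar u,\bar v$ set $\bar w = \bar u-\bar v$ and $w(x) = \bar w((x-z)/\rho)$. The key observation is the identity
\begin{equation*}
\delta(w, z+\rho\bar x, y) = \bar w(\bar x + y/\rho) + \bar w(\bar x - y/\rho) - 2\bar w(\bar x) = \delta(\bar w, \bar x, y/\rho).
\end{equation*}
Substituting $\bar y = y/\rho$ in the explicit formula for $\M^+_{\mathcal{L}_0(\varphi)}$ recalled just before the proposition, and using $\varphi(\rho|\bar y|) = \varphi(\rho)\bar\varphi(|\bar y|)$, one gets
\begin{equation*}
\M^+_{\mathcal{L}_0(\varphi)} w(z+\rho\bar x) = \frac{1}{\varphi(\rho)}\int_{\Rn}\!\!\bigl(\Lambda\delta_+ - \lambda\delta_-\bigr)(\bar w,\bar x,\bar y)\,\frac{c_\varphi}{|\bar y|^n\bar\varphi(|\bar y|)}\,d\bar y = \frac{c_\varphi}{\varphi(\rho)\,c_{\bar\varphi}}\,\M^+_{\mathcal{L}_0(\bar\varphi)}\bar w(\bar x),
\end{equation*}
and similarly for $\M^-$. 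Plugging this into the ellipticity of $\I$ applied to the pair $(\bar u((\cdot-z)/\rho),\bar v((\cdot-z)/\rho))$ at the point $z+\rho\bar x$, and multiplying by the factor $\varphi(\rho)c_{\bar\varphi}/c_\varphi$ used to define $\bar\I$, makes both prefactors cancel and yields
\begin{equation*}
\M^-_{\mathcal{L}_0(\bar\varphi)}\bar w(\bar x)\;\leq\;\bar\I(\bar u,\bar x) - \bar\I(\bar v,\bar x)\;\leq\;\M^+_{\mathcal{L}_0(\bar\varphi)}\bar w(\bar x),
\end{equation*}
with the same $\lambda,\Lambda$. This is precisely ellipticity of $\bar\I$ with respect to $\mathcal{L}_0(\bar\varphi)$.

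There is no conceptual obstacle; the only thing to be careful about is bookkeeping of the normalizing constants $c_\varphi$ and $c_{\bar\varphi}$. The choice of the prefactor $\varphi(\rho)c_{\bar\varphi}/c_\varphi$ in the definition of $\bar\I$ is dictated exactly by the computation above so that $\M^{\pm}_{\mathcal{L}_0(\varphi)}$ transforms into $\M^{\pm}_{\mathcal{L}_0(\bar\varphi)}$ with \emph{identical} ellipticity constants, which is the whole point of the proposition.
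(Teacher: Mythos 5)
Your proof is correct and follows essentially the same route as the paper: the key step in both is the change of variables $y = \rho\bar y$ in the explicit integral formula for the extremal operators, which produces exactly the identity \eqref{eq:rescaled_Pucci_operator} (you write it with the reciprocal prefactor but it is the same relation), and the ellipticity of $\bar\I$ then follows by multiplying the ellipticity inequality for $\I$ by the positive constant $\varphi(\rho)c_{\bar\varphi}/c_\varphi$.
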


\begin{proof}
For the first assertion, it is enough to show that
\begin{align} \label{eq:rescaled_Pucci_operator}
\begin{split}
\M^+_{\mathcal{L}_0(\bar{\varphi})} \bar{u}(\bar{x})
&= \varphi(\rho) \frac{c_{\bar{\varphi}}}{c_\varphi} \M^+_{\mathcal{L}_0(\varphi)} (\bar{u}((\cdot - z) / \rho)) (z+\rho \bar{x}) \quad\text{and} \\
\M^-_{\mathcal{L}_0(\bar{\varphi})} \bar{u}(\bar{x})
&= \varphi(\rho) \frac{c_{\bar{\varphi}}}{c_\varphi} \M^-_{\mathcal{L}_0(\varphi)} (\bar{u}((\cdot - z) / \rho)) (z+ \rho \bar{x}).
\end{split}
\end{align}
This follows from the simple change of variables: for $x = z+\rho \bar{x}$ and $y = \rho \bar{y}$,
\begin{align*}
\M^+_{\mathcal{L}_0(\bar{\varphi})} \bar{u}(\bar{x})
&= \int_\Rn \left( \Lambda \delta^+ (\bar{u}, \bar{x}, \bar{y}) - \lambda \delta^- (\bar{u}, \bar{x}, \bar{y}) \right) \frac{c_{\bar{\varphi}}}{\ve \bar{y} \ve^n \bar{\varphi}(\ve \bar{y} \ve)} d\bar{y} \\
&= \int_\Rn \left( \Lambda \delta^+ (\bar{u}, \bar{x}, y/\rho) - \lambda \delta^- (\bar{u}, \bar{x}, y/\rho) \right) \frac{c_{\bar{\varphi}}}{\ve y \ve^n \varphi(\ve y \ve)/\varphi(\rho)} dy \\
&= \varphi(\rho) \frac{c_{\bar{\varphi}}}{c_\varphi} \int_\Rn \left( \Lambda \delta^+ (\bar{u}((\cdot-z)/\rho), x,y) - \lambda \delta^- (\bar{u}((\cdot-z)/\rho), x, y) \right) \frac{c_\varphi}{\ve y \ve^n \varphi(\ve y \ve)} dy \\
&= \varphi(\rho) \frac{c_{\bar{\varphi}}}{c_\varphi} \M^+_{\mathcal{L}_0(\varphi)} (\bar{u}((\cdot - z) / \rho)) (x),
\end{align*}
and the same argument holds for $\M^-$. Thus $\bar{\I}$ is elliptic with respect to $\mathcal{L}_0(\bar{\varphi})$. The second assertion is obvious.
\end{proof}

When we rescale the equation for a while to apply known estimates and then rescale back, \Cref{prop:rescaled_equation} is very useful. However, it is not sufficient for the blow-up sequence argument. For this purpose, we need to study the limit behavior of the scale function \eqref{eq:bar_varphi} as $\rho \to 0$.

\begin{lemma} \label{lem:limit_varphi}
Let $\varphi$ satisfy the weak scaling property and \eqref{a:varphi}. Let $\lbrace \rho_j \rbrace$ be a sequence such that $\rho_j \searrow 0$ as $j \to \infty$, and set $\bar{\varphi}_j(r) = \varphi(\rho_j r) / \varphi(\rho_j)$. Then $\bar{\varphi}_j$ converges locally uniformly to some function $\bar{\varphi}$ that satisfies the weak scaling property with the same constants.
\end{lemma}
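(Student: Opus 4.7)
The strategy is a standard Arzelà--Ascoli argument on $(0,+\infty)$, exploiting the weak scaling property for local uniform bounds and the derivative bound \eqref{a:varphi} for local equicontinuity, after which the scaling inequalities pass to the limit.

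First, I would record the scaling identity
\begin{equation*}
\frac{\bar{\varphi}_j(R)}{\bar{\varphi}_j(r)} = \frac{\varphi(\rho_j R)}{\varphi(\rho_j r)}, \qquad 0 < r \leq R,
\end{equation*}
so that \eqref{a:WS} applied to $\varphi$ immediately yields
\begin{equation*}
a^{-1} \left( \frac{R}{r} \right)^{\sigma_1} \leq \frac{\bar{\varphi}_j(R)}{\bar{\varphi}_j(r)} \leq a \left( \frac{R}{r} \right)^{\sigma_2}
\end{equation*}
with the \emph{same} constants $\sigma_1$, $\sigma_2$, and $a$. Using $\bar{\varphi}_j(1) = 1$, this shows that on any compact interval $[1/M, M] \subset (0,+\infty)$, the family $\{\bar{\varphi}_j\}$ is uniformly bounded above and uniformly bounded below away from zero (one simply sets $r = 1$ or $R = 1$ in the scaling bound and considers the two cases $r \geq 1$ and $r \leq 1$ separately).

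Next I would establish equicontinuity on $[1/M,M]$ using \eqref{a:varphi}. Differentiating $\bar{\varphi}_j$ gives
\begin{equation*}
\bar{\varphi}_j'(r) = \frac{\rho_j \varphi'(\rho_j r)}{\varphi(\rho_j)} \leq \frac{C}{r} \cdot \frac{\varphi(\rho_j r)}{\varphi(\rho_j)} = \frac{C}{r} \bar{\varphi}_j(r),
\end{equation*}
and combining with the uniform upper bound above, $\bar{\varphi}_j'$ is uniformly bounded on $[1/M, M]$, so $\{\bar{\varphi}_j\}$ is uniformly Lipschitz there. Arzelà--Ascoli therefore produces a subsequence, still denoted $\bar{\varphi}_j$, converging uniformly on $[1/M,M]$ to a continuous function; a standard diagonal argument over an exhaustion of $(0,+\infty)$ by compacts of the form $[1/M, M]$ extracts a single subsequence converging locally uniformly on $(0,+\infty)$ to a limit $\bar{\varphi}$.

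Finally, I would verify that $\bar{\varphi}$ inherits the weak scaling property: the pointwise limit of the uniform inequality above gives
\begin{equation*}
a^{-1} \left( \frac{R}{r} \right)^{\sigma_1} \leq \frac{\bar{\varphi}(R)}{\bar{\varphi}(r)} \leq a \left( \frac{R}{r} \right)^{\sigma_2} \quad\text{for all}~ 0 < r \leq R,
\end{equation*}
and $\bar{\varphi}(1) = 1$ is preserved. No step here is a serious obstacle; the only mildly delicate point is confirming that the equicontinuity given by the derivative bound is truly uniform in $j$ on each compact subinterval, which is why the upper bound step must precede the equicontinuity step.
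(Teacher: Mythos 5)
Your proof is correct and follows essentially the same path as the paper: the derivative bound \eqref{a:varphi} gives local equicontinuity (you differentiate $\bar{\varphi}_j$ directly, the paper applies the mean value theorem to $\varphi$ — same estimate), the weak scaling property gives local uniform bounds, Arzelà--Ascoli plus a diagonal extraction produces the limit, and the scaling inequalities pass to the limit pointwise. The only small presentational difference is that you record the invariance of the weak scaling constants for each $\bar{\varphi}_j$ up front, which makes the final passage to the limit one line; this is cleaner but not a different argument.
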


\begin{proof}
Fix $\varepsilon > 0$ and let $\delta > 0$ be a small constant to be determined. For $\ve x-x_0 \ve < \delta$, we have
\begin{equation*}
\ve \bar{\varphi}_j(x) - \bar{\varphi}_j(x_0) \ve =  \frac{\ve \varphi(\rho_j x) - \varphi(\rho_j x_0) \ve}{\varphi(\rho_j)} = \frac{\varphi'(\rho_j x_0^\ast)}{\varphi(\rho_j)} \ve \rho_j x - \rho_j x_0 \ve
\end{equation*}
for some point $x_0^\ast$ lying in between $x_0$ and $x$. By the assumption \eqref{a:varphi} and the weak scaling property, we obtain
\begin{equation*}
\frac{\varphi'(\rho_j x_0^\ast)}{\varphi(\rho_j)} \ve \rho_j x - \rho_j x_0 \ve \leq \frac{\varphi(\rho_j x_0^\ast)}{\varphi(\rho_j)} \frac{\ve x-x_0 \ve}{x_0^\ast} \leq a \max \lbrace (x_0^\ast)^{\sigma_1-1}, (x_0^\ast)^{\sigma_2-1} \rbrace \delta.
\end{equation*}
By taking $\delta$ sufficiently small so that $a \max \lbrace (x_0^\ast)^{\sigma_1-1}, (x_0^\ast)^{\sigma_2-1} \rbrace \delta < \varepsilon$, we conclude that $\lbrace \bar{\varphi}_j \rbrace$ is equicontinuous. Moreover, the weak scaling property shows that $\lbrace \bar{\varphi}_j \rbrace$ is locally uniformly bounded. Therefore, by the Arzel\`a–Ascoli theorem and the diagonal sequence argument, we find a subsequence of $\lbrace \bar{\varphi}_j \rbrace$, still denoted by $\lbrace \bar{\varphi}_j \rbrace$, that converges locally uniformly to some function $\bar{\varphi}$. The function $\bar{\varphi}$ enjoys the weak scaling property with the same constants. Indeed, we see that for $0 < r \leq R$,
\begin{equation*}
\frac{\bar{\varphi}(R)}{\bar{\varphi}(r)} = \frac{\lim_{j \to \infty} \bar{\varphi}_j(R)}{\bar{\varphi}(r)} \leq a \frac{\lim_{j \to \infty} \bar{\varphi}_j(r)}{\bar{\varphi}(r)} \left( \frac{R}{r} \right)^{\sigma_2} = a \left( \frac{R}{r} \right)^{\sigma_2}.
\end{equation*}
The lower bound of $\bar{\varphi}(R) / \bar{\varphi}(r)$ is obtained in the same way.
\end{proof}

Let us close this section with some useful inequalities for later uses.

\begin{lemma} \label{lem:c_varphi}
Let $\varphi$ satisfy the weak scaling property \eqref{a:WS} and let $\bar{\varphi}$ be given by \eqref{eq:bar_varphi} for $\rho \in (0,1)$. Then the following inequalities hold:
\begin{enumerate} [(i)]
\item
$a^{-1} (2-\sigma_2) \leq c_\varphi \leq a(2-\sigma_1)$.

\item
$\varphi(\rho) \frac{c_{\bar{\varphi}}}{c_\varphi} \leq C$ for some $C = C(a)$.
\end{enumerate}
\end{lemma}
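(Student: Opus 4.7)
Part (i) will follow from pointwise bounds on $\varphi$. The plan is to apply the weak scaling property \eqref{a:WS} with $R=1$ and $r \in (0,1]$, using $\varphi(1)=1$, to obtain $a^{-1}r^{\sigma_2} \leq \varphi(r) \leq a r^{\sigma_1}$. Substituting these into $c_\varphi^{-1} = \int_0^1 r/\varphi(r)\, dr$ and evaluating the two power integrals gives $\frac{1}{a(2-\sigma_1)} \leq c_\varphi^{-1} \leq \frac{a}{2-\sigma_2}$, which is (i) after inversion.

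For part (ii), I will first reduce the claim to a clean ratio of integrals. Set $F(\rho) := \int_0^\rho s/\varphi(s)\, ds$. The change of variables $s = \rho r$ in the integral defining $c_{\bar{\varphi}}^{-1}$ yields $c_{\bar{\varphi}}^{-1} = (\varphi(\rho)/\rho^2)\, F(\rho)$, so
\[
\varphi(\rho) \frac{c_{\bar{\varphi}}}{c_\varphi} = \frac{\rho^2 F(1)}{F(\rho)}.
\]
Splitting $F(1) = F(\rho) + \int_\rho^1 s/\varphi(s)\, ds$, I will bound the two pieces via the weak scaling property centered at $\rho$: for $s \leq \rho$, $\varphi(s) \leq a\varphi(\rho)(s/\rho)^{\sigma_1}$ gives $F(\rho) \geq \rho^2 / (a(2-\sigma_1)\varphi(\rho))$; for $s \geq \rho$, $\varphi(s) \geq a^{-1}\varphi(\rho)(s/\rho)^{\sigma_1}$ gives $\int_\rho^1 s/\varphi(s)\, ds \leq a\rho^{\sigma_1} / ((2-\sigma_1)\varphi(\rho))$. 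Since the $(2-\sigma_1)$ and $\varphi(\rho)$ factors match, dividing yields $\int_\rho^1 s/\varphi(s)\, ds \leq a^2 \rho^{\sigma_1 - 2} F(\rho)$, and multiplying by $\rho^2$ gives $\rho^2 F(1)/F(\rho) \leq \rho^2 + a^2 \rho^{\sigma_1} \leq 1 + a^2$ for $\rho \in (0,1]$. Hence (ii) holds with $C = 1 + a^2$.

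The main obstacle is precisely this cancellation of $(2-\sigma_1)$: a naive approach that estimates $c_{\bar{\varphi}}/c_\varphi$ via part (i) and $\varphi(\rho) \leq a\rho^{\sigma_1}$ separately yields a bound of order $a^3 (2-\sigma_1)/(2-\sigma_2)$, which depends on $\sigma_2$ and blows up as $\sigma_2 \to 2$. Keeping the common exponent $\sigma_1$ in both weak-scaling estimates and working directly with $F$ is what produces a bound depending on $a$ alone.
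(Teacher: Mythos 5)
Your proof is correct and, for part (ii), follows essentially the same route as the paper: the same change of variables reducing the ratio to $\rho^2 F(1)/F(\rho)$, the same two weak-scaling bounds centered at $\rho$ with the common exponent $\sigma_1$, and the same final estimate $\rho^2 + a^2\rho^{\sigma_1} \leq 1+a^2$. For part (i) the paper simply cites \cite[Lemma~2.3]{KL20} without proof, and your direct derivation from $a^{-1}r^{\sigma_2}\leq\varphi(r)\leq a r^{\sigma_1}$ on $(0,1]$ is a correct and self-contained replacement.
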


\begin{proof}
See \cite[Lemma 2.3]{KL20} for the proof of (i). For (ii), we see that
\begin{equation*}
\varphi(\rho) \frac{c_{\bar{\varphi}}}{c_\varphi} = \left( \int_0^1 \frac{r}{\varphi(r)} dr \right) \left( \int_0^1 \frac{r}{\varphi(\rho r)} dr \right)^{-1} = \rho^2 \left( \int_0^1 \frac{r}{\varphi(r)} dr \right) \left( \int_0^\rho \frac{r}{\varphi(r)} dr \right)^{-1}.
\end{equation*}
Using the weak scaling property, we have
\begin{equation*}
\int_\rho^1 \frac{r}{\varphi(r)} dr \leq a \frac{\rho^{\sigma_1}}{\varphi(\rho)} \int_\rho^1 r^{1-\sigma_1} dr = a \frac{\rho^{\sigma_1}}{\varphi(\rho)} \frac{1-\rho^{2-\sigma_1}}{2-\sigma_1}
\end{equation*}
and
\begin{equation*}
\int_0^\rho \frac{r}{\varphi(r)} dr \geq \frac{1}{a} \frac{\rho^{\sigma_1}}{\varphi(\rho)} \int_0^\rho r^{1-\sigma_1} dr = \frac{1}{a} \frac{\rho^{\sigma_1}}{\varphi(\rho)} \frac{\rho^{2-\sigma_1}}{2-\sigma_1}.
\end{equation*}
Thus, we obtain
\begin{equation} \label{eq:varphi_rho}
\varphi(\rho) \frac{c_{\bar{\varphi}}}{c_\varphi} \leq \rho^2 \left( 1 + a^2 \frac{1-\rho^{2-\sigma_1}}{\rho^{2-\sigma_1}} \right) \leq \rho^2 + a^2 \rho^{\sigma_1} \leq 1 + a^2,
\end{equation}
which shows (ii).
\end{proof}

If we assume in addition that $\varphi$ is monotone non-decreasing (which is implied by the assumption that $\phi$ is a Bernstein function), then the proof of \Cref{lem:c_varphi} (ii) is reduced to
\begin{equation*}
\varphi(\rho) \frac{c_{\bar{\varphi}}}{c_\varphi} = \left( \int_0^1 \frac{r}{\varphi(r)} dr \right) \left( \int_0^1 \frac{r}{\varphi(\rho r)} dr \right)^{-1} \leq 1.
\end{equation*}

\subsection{Generalized H\"older space} \label{section:generalized_Holder_space}

This section is devoted to the generalized H\"older spaces. We adopt the definition of generalized H\"older spaces from \cite{BK15}. For more exposition of this space, see \cite{BK15} and references therein. Let $\Omega$ be an open subset of $\Rn$. Throughout the paper, $\Omega$ will denote either balls or the whole space $\Rn$, but the all definitions in this section can be made for arbitrary open set $\Omega$. Let $\psi$ be a function satisfying \eqref{a:psi}. The moduli of continuity we are mainly concerned with are $\psi$, $\varphi$, $\varphi\psi$, and $\varphi(r) r^\alpha$. In order to define the spaces $C^\psi(\overline{\Omega})$, we recall the definition \eqref{eq:index_psi} of the indices $M_\psi$ and $m_\psi$.

\begin{definition}
Suppose that $m_\psi \in (k, k+1]$ for some non-negative integer $k$. The Banach space $C^\psi(\overline{\Omega})$ is defined as the subspace of $C^k(\overline{\Omega})$, equipped with the norm
\begin{equation*}
\Ve u \Ve_{C^\psi(\overline{\Omega})} := \Ve u \Ve_{C^k(\overline{\Omega})} + [u]_{C^\psi(\overline{\Omega})} := \Ve u \Ve_{C^k(\overline{\Omega})} + \sup_{x, y \in \Omega, x \neq y} \frac{\ve D^k u(x) - D^k u(y) \ve}{\psi(\ve x-y \ve) \ve x-y \ve^{-k}}.
\end{equation*}
\end{definition}

Let us write $\Ve \cdot \Ve_{\psi; \Omega} = \Ve \cdot \Ve_{C^\psi(\overline{\Omega})}$ and $[\, \cdot \,]_{\psi; \Omega} = [\, \cdot \,]_{C^\psi(\overline{\Omega})}$ for the sake of brevity. We will also use the notation $C^\psi = C^\alpha$ instead of $C^{r^\alpha}$ when $\psi$ is a polynomial of power $\alpha$, with $\alpha \notin \mathbb{N}$ (If $\alpha \in \mathbb{N}$, then the space $C^\psi$ gives the Lipschitz space $C^{0,1}$, not $C^1$. However we will not make use of Lipschitz spaces in the paper). In particular, the generalized H\"older space with the modulus of continuity $\varphi(r) r^\alpha$ will be frequently used in this work, and in this case it will be denoted by $C^{\varphi + \alpha}$.

It is sometimes useful to introduce non-dimensional norms on $C^\psi(\overline{\Omega})$. If $\Omega$ is bounded, for $d = \mathrm{diam}\, \Omega$, we set
\begin{equation} \label{eq:adim_norm}
\Ve u \Ve'_{\psi; \Omega} := \Ve u \Ve'_{k; \Omega} + [u]'_{\psi; \Omega} := \Ve u \Ve'_{k; \Omega} + \psi(d) [u]_{\psi; \Omega}.
\end{equation}
The spaces $C^\psi(\overline{\Omega})$, equipped with the norm \eqref{eq:adim_norm}, are also Banach spaces. We will also make use of the interior norms
\begin{equation*}
\Ve u \Ve^\ast_{\psi; \Omega} := \Ve u \Ve^\ast_{k; \Omega} + [u]^\ast_{\psi; \Omega} := \Ve u \Ve^\ast_{k; \Omega} + \sup_{x, y \in \Omega, x \neq y} \psi(d_{x, y}) \frac{\ve D^k u(x) - D^k u(y) \ve}{\psi(\ve x-y \ve) \ve x-y \ve^{-k}},
\end{equation*}
where $d_{x, y} := \min \lbrace d_x, d_y \rbrace$ and $d_x = \mathrm{dist}(x, \partial \Omega)$. The space of functions in $C^\psi(\Omega)$ whose interior norms are finite is a Banach space, equipped with the norm $\Ve \cdot \Ve^\ast_{\psi; \Omega}$.

The following interpolation lemma will be used frequently in the sequel. The proof is given in \cite[Proposition 2.6]{BK15} for the whole space, but the same proof holds for balls.

\begin{lemma} \label{lem:interpolation}
Assume $I_{\psi_1}, I_{\psi_2} \subset (0,1) \cup (1,2) \cup (2,3)$ and $M_{\psi_1} < m_{\psi_2}$, and let $\varepsilon \in (0,1)$. Then there is a constant $C = C(n, \psi_1, \psi_2, \varepsilon) > 0$ such that
\begin{equation*}
\Ve u \Ve'_{\psi_1; B} \leq C \Ve u \Ve_{0; B} + \varepsilon \Ve u \Ve_{\psi_2; B},
\end{equation*}
for every ball $B = B_R(x_0)$.
\end{lemma}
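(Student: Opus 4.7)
The plan is the following. Write $k_i:=\lfloor m_{\psi_i}\rfloor=\lfloor M_{\psi_i}\rfloor$ for $i=1,2$, equal by the assumption $I_{\psi_i}\subset (0,1)\cup(1,2)\cup(2,3)$. Then $\Ve u\Ve'_{\psi_i;B}$ decomposes into a non-dimensional $C^{k_i}$ norm plus the single seminorm $\psi_i(R)[u]_{\psi_i;B}$, and the hypothesis $M_{\psi_1}<m_{\psi_2}$ forces $k_1\le k_2$. First I would record the classical Kolmogorov--Landau-type interpolation
\[
R^j\Ve D^j u\Ve_{0;B}\le \delta\bigl(R^{k_2}\Ve D^{k_2}u\Ve_{0;B}+\psi_2(R)[u]_{\psi_2;B}\bigr)+C_\delta\Ve u\Ve_{0;B},\qquad 0\le j\le k_1,
\]
valid on balls since $\psi_2(r)r^{-k_2}$ has its indices in the open unit interval $(0,1)$. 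This handles the $C^{k_1}$ part of $\Ve u\Ve'_{\psi_1;B}$ provided the modulus seminorm $\psi_1(R)[u]_{\psi_1;B}$ is separately controlled.

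To estimate the modulus seminorm I would fix $x\ne y\in B$ and a threshold $\rho\in(0,R]$ to be chosen. In the far regime $|x-y|>\rho$ I would use that $\psi_1$ is almost increasing (as $m_{\psi_1}>0$) to bound the difference quotient by $C\rho^{k_1}\psi_1(\rho)^{-1}\Ve D^{k_1}u\Ve_{0;B}$, then invoke the classical interpolation above; multiplying by $\psi_1(R)$ makes this regime contribute at most $\varepsilon\,\Ve u\Ve_{\psi_2;B}+C(\varepsilon,\rho)\Ve u\Ve_{0;B}$. In the near regime $|x-y|\le\rho$ I would apply Taylor's theorem to $D^{k_1}u$ in integral form up to order $k_2-k_1$: the Taylor polynomial produces terms controlled by $\Ve D^{k_1+j}u\Ve_{0;B}$ for $1\le j\le k_2-k_1-1$, hence absorbable via Kolmogorov--Landau, while the integral remainder, using almost-monotonicity of $\psi_2(r)r^{-k_2}$, is at most $C[u]_{\psi_2;B}\,\psi_2(|x-y|)|x-y|^{-k_1}$. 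Dividing by $\psi_1(|x-y|)|x-y|^{-k_1}$ leaves the ratio $\psi_2(|x-y|)/\psi_1(|x-y|)$.

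The decisive gain then comes from the index inequality: for any $\eta>0$ with $M_{\psi_1}+\eta<m_{\psi_2}-\eta$, the definitions of $M_{\psi_1}$ and $m_{\psi_2}$ produce
\[
\frac{\psi_2(r)\,\psi_1(R)}{\psi_1(r)\,\psi_2(R)}\le C\Bigl(\frac{r}{R}\Bigr)^{m_{\psi_2}-M_{\psi_1}-2\eta},\qquad 0<r\le R,
\]
with a strictly positive exponent. Taking the supremum over $r\le\rho$ bounds the near-regime contribution by $C(\rho/R)^{m_{\psi_2}-M_{\psi_1}-2\eta}\psi_2(R)[u]_{\psi_2;B}$, and choosing $\rho$ small enough as a function of $\varepsilon$ drives this below $\varepsilon\Ve u\Ve_{\psi_2;B}$. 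Combining the two regimes and reabsorbing the intermediate-derivative terms via the first-paragraph interpolation yields the statement, with $C_\varepsilon$ depending on $n$, $\psi_1$, $\psi_2$ and on the final choice $\rho=\rho(\varepsilon)$.

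The main obstacle I expect is the case $k_1<k_2$: one has to arrange the Taylor expansion so that a single ``honest'' remainder carries $[u]_{\psi_2;B}$ while every intermediate Taylor term is absorbable via the classical integer-order interpolation, and the strict inequality $M_{\psi_1}<m_{\psi_2}$ must be upgraded to the quantitative power-type decay displayed above, which is book-keeping-heavy but a direct consequence of the definitions of the indices. When $k_1=k_2$ the Taylor step collapses and the proof reduces to the splitting-by-$\rho$ combined with the index inequality.
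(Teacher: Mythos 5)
The paper gives no proof of its own for this lemma; it simply refers to Bae--Kassmann \cite[Proposition~2.6]{BK15} for the whole-space case and asserts that the same proof works on balls, so there is no in-paper argument to compare against. Your outline follows the canonical proof of such interpolation inequalities and is sound: decompose $\Ve u\Ve'_{\psi_1;B}$, control the integer-order part by Kolmogorov--Landau interpolation on balls, split the $\psi_1$-seminorm at a threshold, use a Taylor expansion with integral remainder on small scales to extract $[u]_{\psi_2;B}$, and exploit the strict gap $M_{\psi_1}<m_{\psi_2}$ through the quantitative decay
\[
\frac{\psi_2(r)\,\psi_1(R)}{\psi_1(r)\,\psi_2(R)}\le C\Bigl(\frac{r}{R}\Bigr)^{m_{\psi_2}-M_{\psi_1}-2\eta},
\]
which does follow directly from the definitions \eqref{eq:index_psi}. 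Two bookkeeping refinements are worth making. First, what the far regime actually uses is not that $\psi_1$ itself is almost increasing but that $\psi_1(r)r^{-k_1}$ is almost increasing; this follows from $m_{\psi_1}>k_1$, which is automatic since $I_{\psi_1}$ avoids the integers, and it is what produces the lower bound $\psi_1(|x-y|)|x-y|^{-k_1}\ge c\,\psi_1(\rho)\rho^{-k_1}$ on scales $|x-y|>\rho$. Second, to obtain a constant independent of $R$, the threshold should be a fixed fraction of the diameter, $\rho=\mu\,\mathrm{diam}\,B$ with $\mu=\mu(\varepsilon)$, rather than an absolute number: with this choice the near-regime index factor becomes $C\mu^{\,m_{\psi_2}-M_{\psi_1}-2\eta}$, uniformly in $R$, and the intermediate Taylor terms acquire powers $\mu^{\,k_1+j-M_{\psi_1}-\eta}$ with $j\ge 1$, which are positive since $M_{\psi_1}<k_1+1$, so they are driven small simultaneously. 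With that interpretation of ``choosing $\rho$ small'' your argument closes correctly.
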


We remark that, in \Cref{lem:interpolation}, the dependence of the constant $C$ on $\psi_1$ and $\psi_2$ can be elaborated into the dependence on $M_{\psi_1}$, $m_{\psi_1}$, $M_{\psi_2}$, $m_{\psi_2}$, and the constants appearing in the definitions of almost increasing and almost decreasing. Therefore, whenever we use \Cref{lem:interpolation} with $\psi_1 = \varphi$, the we can say that the constant $C$ depends on $\sigma_1$, $\sigma_2$, and $a$, or on $\sigma_0$ and $a$.

As we observed in the previous section, the rescaled operator and rescaled functions are related to a new scale function \eqref{eq:bar_varphi}. The following lemmas show how the norms of rescaled functions are related to the norms of the original function.

\begin{lemma} \label{lem:rescaled_norm}
Let $u \in C^\psi(\overline{B_\rho(z)})$ and define $\bar{u}(\bar{x}) = u(z+\rho \bar{x}) / \psi(\rho)$. Assume that $I_\psi \cap \mathbb{N} = \emptyset$ and set $\bar{\psi}(r) = \psi(\rho r) / \psi(\rho)$ for given $\rho > 0$. Then $\bar{u} \in C^{\bar{\psi}}(\overline{B_1})$ and $[\bar{u}]_{\bar{\psi}; B_1} = [u]_{\psi; B_\rho(z)}$.
\end{lemma}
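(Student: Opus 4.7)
The plan is a direct change-of-variables computation; the only preparatory step is to match the integer order of differentiation used to define the two seminorms.

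First I would verify that $m_{\bar{\psi}} = m_{\psi}$ and $M_{\bar{\psi}} = M_{\psi}$. For any $\alpha \in \mathbb{R}$, one has $r^{-\alpha} \bar{\psi}(r) = \rho^{\alpha} \psi(\rho)^{-1} (\rho r)^{-\alpha} \psi(\rho r)$, so the function $r \mapsto r^{-\alpha} \bar{\psi}(r)$ is almost increasing (resp.\ almost decreasing) with exactly the same constant as $s \mapsto s^{-\alpha} \psi(s)$. Hence $I_{\bar{\psi}} = I_{\psi}$, and in particular $I_{\bar{\psi}} \cap \mathbb{N} = \emptyset$ and $m_{\bar{\psi}}$ lies in the same interval $(k, k+1)$ as $m_\psi$. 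Consequently the definitions of $[\,\cdot\,]_{\bar{\psi}; B_1}$ and $[\,\cdot\,]_{\psi; B_\rho(z)}$ both involve $D^k$.

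Next I would carry out the chain rule: $D^k \bar{u}(\bar{x}) = \rho^k \psi(\rho)^{-1} D^k u(z + \rho \bar{x})$. Setting $x = z + \rho \bar{x}$ and $y = z + \rho \bar{y}$, which is a bijection from $\overline{B_1}$ onto $\overline{B_\rho(z)}$, the key identities are $|\bar{x} - \bar{y}| = |x-y|/\rho$, $\bar{\psi}(|\bar{x}-\bar{y}|) = \psi(|x-y|)/\psi(\rho)$, and $|\bar{x}-\bar{y}|^{-k} = \rho^k |x-y|^{-k}$. Substituting these into
\begin{equation*}
[\bar{u}]_{\bar{\psi}; B_1} = \sup_{\bar{x}, \bar{y} \in B_1, \, \bar{x} \neq \bar{y}} \frac{|D^k \bar{u}(\bar{x}) - D^k \bar{u}(\bar{y})|}{\bar{\psi}(|\bar{x}-\bar{y}|) |\bar{x}-\bar{y}|^{-k}},
\end{equation*}
the factor $\rho^k / \psi(\rho)$ appears in both the numerator and the denominator and cancels, leaving exactly the defining supremum of $[u]_{\psi; B_\rho(z)}$.

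The membership $\bar{u} \in C^{\bar{\psi}}(\overline{B_1})$ then follows because the same scaling also controls $\|\bar{u}\|_{C^k(\overline{B_1})}$ in terms of $\|u\|_{C^k(\overline{B_\rho(z)})}$, and the seminorm has just been shown to be finite. There is no real obstacle; the entire argument is a bookkeeping exercise, and its only subtle point is the observation that the weak monotonicity indices are invariant under the multiplicative rescaling $\psi \mapsto \bar{\psi}$, so that the order $k$ of differentiation in the two seminorms agrees.
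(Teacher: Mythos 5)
Your proof is correct and follows essentially the same change-of-variables computation as the paper: verify $I_{\bar\psi}=I_\psi$ so both seminorms use $D^k$ with the same $k$, apply the chain rule $D^k\bar u(\bar x)=\rho^k\psi(\rho)^{-1}D^k u(z+\rho\bar x)$, and cancel the common factor $\rho^k/\psi(\rho)$ after substituting $\bar\psi(|\bar x-\bar y|)=\psi(|x-y|)/\psi(\rho)$. Your closing step, observing that the chain rule already bounds $\|\bar u\|_{C^k(\overline{B_1})}$ by $\|u\|_{C^k(\overline{B_\rho(z)})}$, is a small and clean simplification of the paper's appeal to the interpolation lemma, but the argument is otherwise the same.
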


\begin{proof}
Let $d = \lfloor m_\psi \rfloor$ be the integer part of $m_\psi$. Since $I_\psi = I_{\bar{\psi}}$, $d$ is also the integer part of $\bar{\psi}$. Thus, we have
\begin{align*}
[\bar{u}]_{\bar{\psi}; B_1}
&= \sup_{\bar{x}, \bar{y} \in B_1} \frac{\ve D^d \bar{u}(\bar{x}) - D^d \bar{u}(\bar{y}) \ve}{\bar{\psi}(\ve \bar{x} - \bar{y} \ve) \ve \bar{x} - \bar{y} \ve^{-d}} \\
&= \frac{\rho^d}{\psi(\rho)} \sup_{\bar{x}, \bar{y} \in B_1} \frac{\ve D^d u(z + \rho \bar{x}) - D^d u(z+ \rho \bar{y}) \ve}{\psi(\ve \rho\bar{x} - \rho\bar{y} \ve) \ve \rho \bar{x} - \rho \bar{y} \ve^{-d}} \rho^{-d} \psi(\rho) \\
&= \sup_{x, y \in B_\rho(z)} \frac{\ve D^d u(x) - D^d u(y) \ve}{\psi(\ve x-y \ve) \ve x-y \ve^{-d}} = [u]_{\psi; B_\rho(z)} < +\infty,
\end{align*}
where $x = z + \rho \bar{x}$ and $y = z + \rho \bar{y}$. Since $\Ve \bar{u} \Ve_{C^0(\overline{B_1})} = \frac{1}{\psi(\rho)} \Ve u \Ve_{C^0(\overline{B_\rho(z)})} < +\infty$, by \Cref{lem:interpolation}, we obtain $\bar{u} \in C^{\bar{\psi}}(\overline{B_1})$ with $[\bar{u}]_{\bar{\psi}; B_1} = [u]_{\psi; B_\rho(z)}$.
\end{proof}

\begin{lemma} \label{lem:rescaled_adimnorm}
Let $u \in C^\psi(\overline{B_\rho(z)})$ and define $\bar{u}(\bar{x}) = u(z+\rho \bar{x})$. Assume that $I_\psi \cap \mathbb{N} = \emptyset$ and set $\bar{\psi}(r) = \psi(\rho r) / \psi(\rho)$ for given $\rho > 0$. Then $\bar{u} \in C^{\bar{\psi}}(\overline{B_1})$ and $\Ve \bar{u} \Ve'_{\bar{\psi}; B_1} = \Ve u \Ve'_{\psi; B_\rho(z)}$.
\end{lemma}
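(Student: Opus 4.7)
The plan is to follow the same change-of-variables computation as in the proof of \Cref{lem:rescaled_norm}, but now track the additional diameter factors that enter the non-dimensional norm \eqref{eq:adim_norm} and verify that they exactly absorb the $\psi(\rho)$ rescaling which, in the previous lemma, had been handled by dividing $\bar{u}$ by $\psi(\rho)$.

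First, I would set $d = \lfloor m_\psi \rfloor$, which equals $\lfloor m_{\bar{\psi}} \rfloor$ because $I_{\bar\psi} = I_\psi$ (the indices are invariant under the rescaling $\bar\psi(r) = \psi(\rho r)/\psi(\rho)$, since that only multiplies $\psi$ by constants and dilates the argument). Because $I_\psi \cap \mathbb{N} = \emptyset$, the classical derivative $D^d \bar{u}$ exists and satisfies $D^d \bar{u}(\bar{x}) = \rho^d D^d u(z + \rho \bar{x})$. Substituting this together with $\bar{\psi}(|\bar{x}-\bar{y}|) = \psi(\rho|\bar{x}-\bar{y}|)/\psi(\rho) = \psi(|x-y|)/\psi(\rho)$ and $|\bar{x}-\bar{y}|^{-d} = \rho^d |x-y|^{-d}$ into the seminorm, the powers of $\rho$ cancel and I obtain
\begin{equation*}
[\bar{u}]_{\bar{\psi}; B_1} = \psi(\rho) \, [u]_{\psi; B_\rho(z)}.
\end{equation*}

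Next, for the lower-order part, a direct computation gives $\Ve D^j \bar{u} \Ve_{L^\infty(B_1)} = \rho^j \Ve D^j u \Ve_{L^\infty(B_\rho(z))}$ for each $0 \leq j \leq d$. Multiplying by $2^j = (\mathrm{diam}\, B_1)^j$ on the rescaled side versus $(2\rho)^j = (\mathrm{diam}\, B_\rho(z))^j$ on the original side yields $\Ve \bar u \Ve'_{d;B_1} = \Ve u \Ve'_{d;B_\rho(z)}$. Similarly, the seminorm part of the non-dimensional norm satisfies
\begin{equation*}
[\bar{u}]'_{\bar{\psi}; B_1} = \bar{\psi}(2)\,[\bar{u}]_{\bar{\psi}; B_1} = \frac{\psi(2\rho)}{\psi(\rho)} \cdot \psi(\rho) \, [u]_{\psi; B_\rho(z)} = \psi(2\rho)\,[u]_{\psi; B_\rho(z)} = [u]'_{\psi; B_\rho(z)}.
\end{equation*}
Adding the two contributions gives the claimed identity $\Ve \bar u \Ve'_{\bar\psi; B_1} = \Ve u \Ve'_{\psi; B_\rho(z)}$. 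Finiteness of the right-hand side (since $u \in C^\psi(\overline{B_\rho(z)})$) then yields $\bar u \in C^{\bar\psi}(\overline{B_1})$.

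There is essentially no obstacle: the statement is purely a bookkeeping identity, and the only minor point is to note that the hypothesis $I_\psi \cap \mathbb{N} = \emptyset$ ensures the integer $d$ is unambiguous for both $\psi$ and $\bar{\psi}$, so that the seminorms are computed at the same order of differentiability on each side. Unlike \Cref{lem:rescaled_norm}, we do not need to invoke \Cref{lem:interpolation} here, since the $L^\infty$ norm of $\bar{u}$ is immediately $\Ve u \Ve_{L^\infty(B_\rho(z))}$ without any $\psi(\rho)$ factor.
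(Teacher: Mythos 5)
Your proof is correct and takes essentially the same change-of-variables approach as the paper, down to the cancellation of $\rho^j$ against the diameter factors $(2\rho)^j$ and the observation that $\bar\psi(2)\cdot\psi(\rho)=\psi(2\rho)$. Your remark that, unlike in \Cref{lem:rescaled_norm}, no appeal to the interpolation \Cref{lem:interpolation} is needed here is also accurate.
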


\begin{proof}
As in the proof of \Cref{lem:rescaled_norm}, let $d = \lfloor m_\psi \rfloor$. Then, we have
\begin{align*}
\Ve \bar{u} \Ve'_{\bar{\psi}; B_1}
&= \sum_{i=0}^d 2^i \Ve D^i \bar{u} \Ve_{C^0(\overline{B_1})} + \bar{\psi}(2) \sup_{\bar{x}, \bar{y} \in B_1} \frac{\ve D^d \bar{u} (\bar{x}) - D^d \bar{u} (\bar{y}) \ve}{\bar{\psi}(\ve \bar{x} - \bar{y} \ve) \ve \bar{x} - \bar{y} \ve^{-d}} \\
&= \sum_{i=0}^d (2\rho)^i \Ve D^i u \Ve_{C^0(\overline{B_\rho(z)})} + \frac{\psi(2\rho)}{\psi(\rho)} \sup_{x, y \in B_\rho(z)} \frac{\ve D^d u(x) - D^d u(y) \ve}{\psi(\ve x-y \ve) \ve x-y \ve^{-d}} \psi(\rho) = \Ve u \Ve'_{\psi; B_\rho(z)},
\end{align*}
where $x = z +\rho \bar{x}$ and $y = z + \rho \bar{y}$, which gives the desired result.
\end{proof}

\section{Liouville theorem} \label{section:Liouville}

In this section, we prove the Liouville type theorem that is the key ingredient of the proof of the Evans–Krylov theorem. Before we state and prove the Liouville type theorem, we observe that if \eqref{a:index} is assumed for some $\bar{\alpha} \in (0,1)$, then there is $\alpha \in (0, m_\psi)$ such that
\begin{equation} \label{eq:alpha}
\lfloor m_{\varphi + \bar{\alpha}} \rfloor = \lfloor m_{\varphi \psi} \rfloor < m_{\varphi+\alpha} < m_{\varphi \psi}.
\end{equation}
Indeed, $\alpha := m_\psi - (m_{\varphi \psi} - \lfloor m_{\varphi \psi} \rfloor)/2 \in (0, m_{\psi})$ satisfies \eqref{eq:alpha}. Whenever we take $\alpha \in (0, m_\psi)$ in the paper, we have in mind this constant.

\begin{theorem} \label{thm:Liouville}
Let $0 < \lambda \leq \Lambda$, $a \geq 1$, and $\sigma_0 \in (0,2)$. There is a universal constant $\bar{\alpha} \in (0,1)$, depending only on $n$, $\lambda$, $\Lambda$, $a$, and $\sigma_0$, such that the following statement holds: assume \eqref{a:index} and let $\alpha \in (0, m_\psi)$ be the constant satisfying \eqref{eq:alpha}. Suppose that $u \in C^{\varphi + \alpha}_{\mathrm{loc}}(\Rn)$ satisfies the following properties.
\begin{enumerate} [(i)]
\item
There is a constant $C_1 > 0$ such that
\begin{equation*}
\Ve u \Ve'_{\varphi +\alpha; B_R} \leq C_1 \varphi(R) \psi(R) \quad\text{for all}~ R \geq 1.
\end{equation*}
\item
For any $h \in \Rn$, we have
\begin{equation*}
\M^-_{\mathcal{L}_0(\varphi)} (u(\cdot + h) - u) \leq 0 \leq \M^+_{\mathcal{L}_0(\varphi)} (u(\cdot + h) - u) \quad\text{in}~ \Rn.
\end{equation*}
\item
For every non-negative $L^1(\Rn)$ function $\mu$ with compact support, we have
\begin{equation*}
\M^+_{\mathcal{L}_0(\varphi)} \left( \fint u(\cdot + h) \mu(h) dh - u \right) \geq 0 \quad\text{in}~ \Rn,
\end{equation*}
where the symbol $\fint$ means the averaged integral $(\int \mu(h) dh)^{-1} \int$.
\end{enumerate}
Then $u$ is a polynomial of degree $d := \lfloor m_{\varphi\psi} \rfloor$.
\end{theorem}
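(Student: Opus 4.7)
The plan is to follow the contradiction-and-compactness strategy underlying the Liouville theorem of Serra~\cite{Ser15a}, adapted to the variable-order framework via the rescaling tools of Section~\ref{section:preliminaries}. First, I unpack hypotheses~(ii) and~(iii). By~(ii), for each $h\in\Rn$ the translate $v_h(x):=u(x+h)-u(x)$ is a viscosity solution of the two-sided Pucci inequality $\M^-_{\mathcal{L}_0(\varphi)}v_h\le 0\le\M^+_{\mathcal{L}_0(\varphi)}v_h$ in $\Rn$, so the interior $C^{\bar\alpha}$ Krylov--Safonov estimate for the class $\mathcal{L}_0(\varphi)$ established in~\cite{KL20} applies; this is the source of the universal $\bar\alpha\in(0,1)$ depending only on $n,\lambda,\Lambda,a,\sigma_0$. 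Testing~(iii) against approximations of $\mu=\tfrac12(\delta_h+\delta_{-h})$ gives $\M^+_{\mathcal{L}_0(\varphi)}\delta(u,\cdot,h)\ge 0$, which will play the role of ``concavity'' on second-order increments.

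Next, I argue by contradiction: suppose $u$ is not a polynomial of degree $d:=\lfloor m_{\varphi\psi}\rfloor$, and set $\Theta(R):=\inf_{P\in\mathcal{P}_d}\lV u-P\rV_{L^\infty(B_R)}$, so that $\Theta(R)>0$ for all large $R$. Hypothesis~(i), combined with Taylor's theorem and the bound $[u]_{\varphi+\alpha;B_R}\le CR^{-\alpha}\psi(R)$ pulled out of the non-dimensional norm~\eqref{eq:adim_norm}, yields $\Theta(R)\le C\varphi(R)\psi(R)$ for $R\ge 1$. I then choose a sequence $R_k\to\infty$ (nearly) maximizing $\Theta(R)/(\varphi(R)\psi(R))$ and polynomials $P_k\in\mathcal{P}_d$ with $\lV u-P_k\rV_{L^\infty(B_{R_k})}\le 2\Theta(R_k)$, and define the blow-down
\begin{equation*}
\tilde u_k(x):=\frac{u(R_kx)-P_k(R_kx)}{\Theta(R_k)}.
\end{equation*}
By construction $\lV\tilde u_k\rV_{L^\infty(B_1)}\sim 1$, and by \Cref{prop:rescaled_equation} $\tilde u_k$ still satisfies~(ii) and~(iii) relative to the rescaled class $\mathcal{L}_0(\bar\varphi_k)$ with $\bar\varphi_k(r):=\varphi(R_kr)/\varphi(R_k)$. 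The maximizing choice of $R_k$ together with~(i) transfers to a uniform growth bound $\ve\tilde u_k(x)\ve\le C(1+\bar\varphi_k(\ve x\ve)\bar\psi_k(\ve x\ve))$ on $\Rn$.

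By \Cref{lem:limit_varphi} (with the obvious adaptation for $\rho_j\to\infty$, and the analogue for $\psi$) I pass to a subsequence along which $\bar\varphi_k\to\bar\varphi_\infty$ and $\bar\psi_k\to\bar\psi_\infty$ locally uniformly, the limits still obeying the weak scaling property with the same constants. Since $\bar\alpha$ depends only on the universal parameters, the $C^{\bar\alpha}$ estimate from Step~1 is uniform in $k$, and Arzelà--Ascoli produces a locally uniform limit $\tilde u_\infty\in C^{\bar\alpha}_{\mathrm{loc}}(\Rn)$ inheriting~(ii) and~(iii) for $\mathcal{L}_0(\bar\varphi_\infty)$ (stability of viscosity inequalities under the locally uniform convergence of the extremal operators, with tails controlled by the growth bound and weak scaling). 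Moreover, the Taylor polynomial of degree $d$ of $\tilde u_\infty$ at the origin vanishes (by the subtraction of $P_k$), while $\lV\tilde u_\infty\rV_{L^\infty(B_1)}\sim 1$. Iterating the $C^{\bar\alpha}$ regularity on finite differences of $\tilde u_\infty$ (using~(iii) in the limit to pass to the second-order increments when $d\ge 1$) together with the subpolynomial growth $m_{\bar\varphi_\infty\bar\psi_\infty}<d+1$ forces $\tilde u_\infty$ to be a polynomial of degree $\le d$; combined with the vanishing of its degree-$d$ Taylor polynomial at $0$, this gives $\tilde u_\infty\equiv 0$, contradicting $\lV\tilde u_\infty\rV_{L^\infty(B_1)}\sim 1$.

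The principal obstacle is the absence of scale invariance: under rescaling, $\mathcal{L}_0(\varphi)$ becomes $\mathcal{L}_0(\bar\varphi_k)$ with a different scale function, and one has to keep the Hölder exponent, the ellipticity constants, and the polynomial-approximation moduli uniform as $R_k\to\infty$. This is exactly what the weak scaling property of $\varphi$ combined with \Cref{lem:limit_varphi} delivers. A secondary subtlety is the compatibility between $\varphi$ and $\psi$ encoded in~\eqref{eq:alpha}: the choice $\alpha\in(0,m_\psi)$ with $\lfloor m_{\varphi+\alpha}\rfloor=d$ and $m_{\varphi+\alpha}<m_{\varphi\psi}$ is precisely what guarantees that subtracting the degree-$d$ Taylor polynomial of $u$ produces a genuine $\ve x\ve^\alpha\varphi(\ve x\ve)$ decay near the origin, which feeds back into the iteration and closes the contradiction.
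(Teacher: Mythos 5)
Your proposal takes a genuinely different route from the paper --- a blow-down contradiction at infinity rather than a direct gain-of-regularity argument --- and, as written, it has a serious gap at the decisive step.

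The critical claim you leave unjustified is that ``iterating the $C^{\bar\alpha}$ regularity on finite differences of $\tilde u_\infty$ \dots forces $\tilde u_\infty$ to be a polynomial of degree $\leq d$.'' This is precisely a Liouville theorem for $\tilde u_\infty$, which satisfies the same hypotheses (ii), (iii) and a growth bound of the same type as $u$; asserting it without proof makes the argument circular. Worse, the iteration scheme does not actually run: hypothesis (ii) gives the two-sided Pucci inequality only for single translates $v_h = u(\cdot+h)-u$, not for iterated differences. Writing $\M^+_{\mathcal{L}_0(\varphi)} v_{h_1} \geq 0$ and $\M^+_{\mathcal{L}_0(\varphi)} v_{h_1}(\cdot+h_2) \geq 0$ and subtracting does not produce a Pucci inequality for $v_{h_1}(\cdot+h_2)-v_{h_1}$, because the extremal operators are not linear. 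So Krylov--Safonov applied to $v_h$ yields $C^{\bar\alpha}$ regularity of translates but cannot be bootstrapped to the degree $d+1+\varepsilon > m_{\varphi\psi}$ of smoothness needed to beat the growth bound. What actually closes this step in the paper is quite different: \Cref{lem:PN} uses (ii) and (iii) \emph{jointly} in a measure-estimate argument (the sets $A$, $D$, the auxiliary functions $v$, $w$, the local boundedness \Cref{thm:local_boundedness} and the decay estimate \cite[Theorem 4.7]{KL20}) to prove that the \emph{full-kernel} quantity $L_\varphi u$ is $C^{\bar\alpha}$ near the origin, and then the potential theory of \cite{BK15} converts $L_\varphi u \in C^{\bar\alpha}$ into $u \in C^{\varphi+\bar\alpha}$, a genuine gain of $\varphi$ worth of derivatives. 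Only after that does the rescaling $[u]_{\varphi+\bar\alpha; B_{\rho/4}} \leq C\psi(\rho)\rho^{-\bar\alpha}$, combined with $I_\psi\subset(0,\bar\alpha)$, force $[u]_{\varphi+\bar\alpha;\Rn}=0$ in the limit $\rho\to\infty$. Your sketch contains no analogue of the $L_\varphi u$ estimate or of the potential-theory step, and these are the heart of the proof.

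A secondary problem is the choice of $P_k$ by $L^\infty$-best approximation: from this you cannot conclude that ``the Taylor polynomial of degree $d$ of $\tilde u_\infty$ at the origin vanishes,'' because the $L^\infty$-optimal polynomial is not the Taylor polynomial. The standard device (and the one used in \Cref{prop:EK_intermediate}, \eqref{eq:p_j}--\eqref{eq:optimality}) is the $L^2$-best polynomial, whose Euler--Lagrange condition gives the orthogonality $\int_{B_1}\tilde u_\infty\, q = 0$ for all $q\in\mathcal{P}_d$; it is that orthogonality, not Taylor vanishing, which forces a polynomial limit to be identically zero. If you want to keep a contradiction blow-down structure, you should switch to $L^2$-optimal polynomials and, more importantly, replace the unsubstantiated ``iteration'' step with the concrete mechanism of \Cref{lem:PN} plus the \cite{BK15} Schauder-type estimate for $L_\varphi$.
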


As pointed out in \cite{Ser15a}, the growth condition (i) is not enough to have $\M^+_{\mathcal{L}_0(\varphi)} u$ and $\M^-_{\mathcal{L}_0(\varphi)} u$ well defined in the classical sense, but it guarantees that $\M^+_{\mathcal{L}_0(\varphi)}$ and $\M^-_{\mathcal{L}_0(\varphi)}$ of $u(\cdot + h) - u$ (and of $\fint u(\cdot +h)\mu(h) dh - u$) are well defined in the classical sense.

The proof of \Cref{thm:Liouville} basically follows the lines of the proof of \cite[Theorem 2.1]{Ser15a}, but we need to be careful in the scaling argument since the rescaled functions have different scales as we already observed in \Cref{section:preliminaries}. The following lemma shows this scaling procedure.

\begin{lemma} \label{lem:rescaled}
Under the same setting as in \Cref{thm:Liouville}, the rescaled function $\bar{u}(\bar{x}) = \frac{1}{\varphi(\rho) \psi(\rho)} u(\rho \bar{x})$ satisfies the same assumptions (i), (ii), and (iii), with $\varphi$ and $\psi$ replaced by $\bar{\varphi}$ and $\bar{\psi}$, respectively, where $\bar{\varphi}(r) = \varphi(\rho r) / \varphi(\rho)$ and $\bar{\psi}(r) = \psi(\rho r) / \psi(\rho)$.
\end{lemma}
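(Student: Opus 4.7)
The plan is to verify each of the three conditions by direct computation, using \Cref{prop:rescaled_equation} for (ii) and (iii) and the norm identities of \Cref{section:generalized_Holder_space} for (i). The key observation is that the multiplicative factor $1/(\varphi(\rho)\psi(\rho))$ in the definition of $\bar u$ is a positive constant, so by the positive homogeneity of the extremal operators it does not affect the sign of any Pucci-type inequality; it only changes the normalization in (i).

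For (ii), I would apply \Cref{prop:rescaled_equation} to the function $v(x) = u(x+h) - u(x)$ with $h = \rho \bar h$. Since $\bar u(\bar x + \bar h) - \bar u(\bar x) = (\varphi(\rho)\psi(\rho))^{-1} v(\rho \bar x)$, the identity \eqref{eq:rescaled_Pucci_operator} gives
\begin{equation*}
\M^\pm_{\mathcal{L}_0(\bar\varphi)}(\bar u(\cdot + \bar h) - \bar u)(\bar x) = \frac{c_{\bar\varphi}}{c_\varphi\, \psi(\rho)}\,\M^\pm_{\mathcal{L}_0(\varphi)}(u(\cdot + h) - u)(\rho \bar x),
\end{equation*}
and the sign of the right-hand side is controlled by (ii) applied to $u$. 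For (iii), the same argument works after noting that the averaged integral $\fint$ commutes with the rescaling: choosing $\mu(h) = \bar\mu(h/\rho)$, one checks
\begin{equation*}
\fint \bar u(\bar x + \bar h)\,\bar\mu(\bar h)\,d\bar h - \bar u(\bar x) = \frac{1}{\varphi(\rho)\psi(\rho)}\left(\fint u(\rho\bar x + h)\mu(h)\,dh - u(\rho \bar x)\right),
\end{equation*}
so \Cref{prop:rescaled_equation} together with (iii) for $u$ gives the desired nonnegativity of $\M^+_{\mathcal{L}_0(\bar\varphi)}$.

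For (i), I would write out the non-dimensional norm $\|\bar u\|'_{\bar\varphi + \alpha; B_R}$ term by term using $D^i \bar u(\bar x) = \rho^i D^i u(\rho \bar x)/(\varphi(\rho)\psi(\rho))$ and the weak-scaling identity $\bar\varphi(r) r^\alpha / (\rho^\alpha) = (\varphi(\rho r)(r)^\alpha)/\varphi(\rho)\cdot \rho^\alpha/\rho^\alpha$, arriving at the clean identity
\begin{equation*}
\|\bar u\|'_{\bar\varphi + \alpha; B_R} = \frac{1}{\varphi(\rho)\psi(\rho)}\,\|u\|'_{\varphi + \alpha; B_{\rho R}}.
\end{equation*}
When $\rho R \geq 1$, assumption (i) for $u$ yields $\|u\|'_{\varphi + \alpha; B_{\rho R}} \leq C_1 \varphi(\rho R)\psi(\rho R) = C_1 \varphi(\rho)\psi(\rho)\bar\varphi(R)\bar\psi(R)$, giving the target bound directly.

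The only subtle point is the range $R \geq 1$ with $\rho R < 1$ (which can only occur when $\rho < 1$); here (i) for $u$ does not apply at radius $\rho R$. I would handle this by monotonicity: since $\varphi(r)r^\alpha$ is increasing (recalling that $\varphi$ is non-decreasing and $\alpha>0$), $\|u\|'_{\varphi+\alpha;B_r}$ is nondecreasing in $r$, so $\|u\|'_{\varphi + \alpha; B_{\rho R}} \leq \|u\|'_{\varphi + \alpha; B_1} \leq C_1$ by (i) with $R = 1$. Comparing with $\bar\varphi(R)\bar\psi(R) \geq a^{-2}R^{2\sigma_1}$ for $R\geq 1$ and using that $\varphi(\rho)\psi(\rho) \leq \varphi(\rho R)\psi(\rho R)$ (again by monotonicity) closes the estimate, possibly with a harmless constant absorbed into $C_1$. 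This dichotomy between the two ranges of $\rho R$ is the main, though essentially bookkeeping, obstacle; everything else reduces to the explicit scaling formulas already established in \Cref{section:preliminaries}.
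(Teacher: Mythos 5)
Your approach is essentially identical to the paper's: assumption (i) is obtained by the non-dimensional norm rescaling identity of \Cref{lem:rescaled_adimnorm} combined with (i) for $u$, and (ii)--(iii) follow directly from the extremal operator identity \eqref{eq:rescaled_Pucci_operator} (equivalently \Cref{prop:rescaled_equation}), using the positive homogeneity of $\M^\pm$ and, for (iii), that $\fint$ commutes with the dilation.

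The ``subtle point'' you raise for the range $\rho R < 1$ is not needed in the paper: \Cref{lem:rescaled} is only invoked with $\rho \geq 1$ (in \Cref{lem:PN} one takes $\rho = R \geq 1$, and in the proof of \Cref{thm:Liouville} one scales back ``for all $\rho \geq 1$''), so $\rho R \geq 1$ always holds when $R \geq 1$ and the one-line bound $\Ve u \Ve'_{\varphi+\alpha; B_{\rho R}} \leq C_1 \varphi(\rho R)\psi(\rho R)$ applies directly. Moreover, your argument for that extra case implicitly treats $\psi$ as monotone when writing $\varphi(\rho)\psi(\rho) \leq \varphi(\rho R)\psi(\rho R)$; the paper does not assume $\psi$ monotone (only the index conditions $\psi(1)=1$, $\lim_{r\to 0}\psi(r)=0$, $I_\psi \subset (0,\bar\alpha)$), so one would need to invoke almost-increasingness with a constant. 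Since the extra case never arises, none of this affects the validity of the lemma as used, but you should be aware that the proposal does slightly more than necessary and that the justification of the extra step is imprecise.
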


\begin{proof}
By \Cref{lem:rescaled_adimnorm}, we have
\begin{equation*}
\Ve \bar{u} \Ve'_{\bar{\varphi}+\alpha; B_R} = \frac{1}{\varphi(\rho)\psi(\rho)} \Ve u \Ve_{\varphi+\alpha; B_{\rho R}} \leq C_1 \frac{\varphi(\rho R) \psi(\rho R)}{\varphi(\rho) \psi(\rho)} = C_1 \bar{\varphi}(R) \bar{\psi}(R),
\end{equation*}
which gives (i). The assumptions (ii) and (iii) follow from \eqref{eq:rescaled_Pucci_operator}. 
\end{proof}

The key step for the proof of \Cref{thm:Liouville} is to prove that $L_\varphi u \in C^{\bar{\alpha}}(\overline{B_{1/2}})$, where $L_\varphi$ is a linear operator of the form \eqref{eq:linear_operator} with the kernel $K_\varphi(y) := \frac{c_\varphi}{\ve y \ve^n \varphi(\ve y \ve)}$. We will prove that $P \leq C \ve h \ve^{\bar{\alpha}}$ and $N \leq C \ve h \ve^{\bar{\alpha}}$ for all $h \in \overline{B_1}$, where
\begin{align*}
P(h) &:= \int_\Rn \left( \delta(u, h, y) - \delta(u, 0, y) \right))_+ \frac{c_\varphi}{\ve y \ve^n \varphi(\ve y \ve)} dy \quad\text{and} \\
N(h) &:= \int_\Rn \left( \delta(u, h, y) - \delta(u, 0, y) \right))_- \frac{c_\varphi}{\ve y \ve^n \varphi(\ve y \ve)} dy,
\end{align*}
and that the same proof also works when the point 0 in the above definition is replaced by any points in $\overline{B_{1/2}}$, for some constant $C$ independent of $x$. Then it follows that $\ve L_\varphi u (x+h) - L_\varphi u(x) \ve \leq C \ve h \ve^{\bar{\alpha}}$ for all $x \in \overline{B_{1/2}}$ and $h \in \overline{B_1}$. We point out that the constant $C$ here is not necessarily independent of $\sigma_1$ and $\sigma_2$.

\begin{lemma} \label{lem:PN}
Under the same setting as in \Cref{thm:Liouville}, there is a constant $C > 0$ such that $P(x) \leq C \ve x \ve^{\bar{\alpha}}$ and $N(x) \leq C \ve x \ve^{\bar{\alpha}}$ for all $x \in \overline{B_1}$.
\end{lemma}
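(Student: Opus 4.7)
The plan is to adapt \cite[Lemma~2.3]{Ser15a} to the variable-order setting, using the rescaling tools of \Cref{section:WS} in place of the explicit homogeneity of fractional kernels. The initial observation is that, setting $w_x(z) := u(z+x) - u(z)$, one has $\delta(w_x, 0, y) = \delta(u, x, y) - \delta(u, 0, y)$, so that $P(x)$ and $N(x)$ are precisely the weighted integrals of the positive and negative parts of $\delta(w_x, 0, \cdot)$ against $K_\varphi(y) = c_\varphi/(|y|^n \varphi(|y|))$. Evaluating hypothesis~(ii) at the origin yields $\Lambda P(x) - \lambda N(x) \geq 0$ and $\lambda P(x) - \Lambda N(x) \leq 0$, so $P(x)$ and $N(x)$ are comparable up to the factor $\Lambda/\lambda$. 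It therefore suffices to bound $P(x) + N(x) = \int_{\Rn} |\delta(w_x, 0, y)|\, K_\varphi(y)\, dy$ by $C|x|^{\bar{\alpha}}$.

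Next I would upgrade the pointwise control on $w_x$. Since by~(ii) the function $w_x$ satisfies both Pucci inequalities on all of $\Rn$, the Krylov--Safonov type interior H\"older estimate for variable-order extremal operators from \cite{KL20} provides a universal exponent $\bar{\alpha} \in (0,1)$, depending only on $n, \lambda, \Lambda, a, \sigma_0$, for which $w_x \in C^{\bar{\alpha}}_{\mathrm{loc}}(\Rn)$ with quantitative bounds in terms of an $L^\infty$-plus-tail norm of $w_x$. Shrinking $\bar{\alpha}$ if necessary so that the remaining conditions of \eqref{a:index} are respected, the rescaling identities of \Cref{prop:rescaled_equation} and \Cref{lem:rescaled_adimnorm}, together with $\varphi(\rho) c_{\bar{\varphi}}/c_\varphi \leq C(a)$ from \Cref{lem:c_varphi}(ii), let this H\"older estimate be applied uniformly on every dyadic ball $B_\rho$. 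Combined with the growth hypothesis~(i), which supplies the requisite $L^\infty$ and $C^{\varphi+\alpha}$ norms on $u$, this produces a scale-adapted H\"older bound on $w_x$ of the form $\|w_x\|_{L^\infty(B_\rho)} + \rho^{\bar{\alpha}} [w_x]_{C^{\bar{\alpha}}(B_\rho)} \lesssim |x|^{\bar{\alpha}}\, \varphi(\rho)\psi(\rho) / (\varphi(|x|)\psi(|x|))$ valid for all $\rho \geq |x|$.

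I would then split $P(x) + N(x) = I_1 + I_2 + I_3$, corresponding respectively to the regions $\{|y| \leq |x|\}$, $\{|x| < |y| \leq 1\}$, and $\{|y| > 1\}$. For $I_1$, interpolating the above H\"older bound on $w_x$ against the higher $C^{\varphi+\alpha}$ regularity of $u$ from~(i) gives $|\delta(w_x, 0, y)| \leq C|x|^{\bar{\alpha}} \varphi(|y|) |y|^{\alpha - \bar{\alpha}}$, which is integrable against $K_\varphi$ near the origin. For $I_2$, the pointwise $C^{\bar{\alpha}}$ estimate on $w_x$ gives $|\delta(w_x, 0, y)| \leq C|x|^{\bar{\alpha}}$, and the weak scaling \eqref{a:WS} keeps $\int_{|x| < |y| \leq 1} K_\varphi(y)\, dy$ uniformly bounded. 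For $I_3$, a dyadic decomposition of $\{|y| > 1\}$ combined with a Taylor expansion of $w_x = u(\cdot + x) - u(\cdot)$ that exploits $u \in C^{\varphi+\alpha}_{\mathrm{loc}}$ produces a geometric series, convergent thanks to $M_\psi < \bar{\alpha}$.

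The principal obstacle is the far-field piece $I_3$: the kernel decays only as $|y|^{-n}/\varphi(|y|)$, while (i) allows $u$ itself to grow as fast as $\varphi(|y|)\psi(|y|)$, so a crude $L^\infty$ bound on $w_x$ alone would make the integrand essentially borderline nonintegrable. The extra $|x|^{\bar{\alpha}}$ factor must be extracted from the genuine $C^{\varphi+\alpha}_{\mathrm{loc}}$ regularity of $u$ via first-order cancellation in $x$, and the admissible range $\bar{\alpha} < m_\psi$ of \eqref{a:index} is exactly what makes the dyadic sum converge. Hypothesis~(iii), specialised to measures $\mu$ approximately concentrated on $\{\pm x_0\}$, supplies the supplementary Pucci inequality $\M^+_{\mathcal{L}_0(\varphi)} \delta(u, \cdot, x_0) \geq 0$ on second differences, which helps preserve the comparability of $P$ and $N$ throughout the summation and pins the admissible $\bar{\alpha}$ to the range in \eqref{a:index}.
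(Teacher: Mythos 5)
Your proposal has a genuine gap that cannot be repaired within the framework you set up: the exponent $\bar\alpha$ is strictly larger than $M_\psi$, and hence strictly larger than any $\alpha < m_\psi$ that the $C^{\varphi+\alpha}$ regularity from hypothesis~(i) can supply. Direct integral estimates based on splitting the kernel integral and interpolating between the $C^{\bar\alpha}$ bound on $w_x$ and the $C^{\varphi+\alpha}$ regularity of $u$ can give at best a decay like $P(x) \lesssim |x|^\alpha$ with $\alpha < m_\psi$ — a strictly weaker conclusion for small $|x|$, and one that is useless in the Liouville theorem (there one needs $\psi(\rho)\rho^{-\bar\alpha} \to 0$, which requires $\bar\alpha > M_\psi$). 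Concretely, your $I_1$ estimate fails: integrating the claimed bound $C|x|^{\bar\alpha}\varphi(|y|)|y|^{\alpha-\bar\alpha}$ against $K_\varphi(y) = c_\varphi/(|y|^n\varphi(|y|))$ over $B_{|x|}$ produces $\int_0^{|x|} r^{\alpha-\bar\alpha-1}\,dr$, which diverges at the origin since $\alpha < \bar\alpha$. Your $I_2$ estimate also fails: $\int_{|x|<|y|\le 1} K_\varphi(y)\,dy$ is \emph{not} uniformly bounded — by the weak scaling property it grows like $|x|^{-\sigma_1}$ as $|x|\to 0$, since $\int_{B_1}K_\varphi = \infty$.

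What you are missing is the actual engine of the paper's proof. Rather than estimating the kernel integral directly, the paper treats $P$ and $N$ themselves as functions on $\Rn$ and runs a diminishing-of-oscillation argument on $P$. Hypothesis~(iii) (via stability and approximation of $\chi_A K_\varphi$ by compactly supported $L^1$ measures) shows that both $v(x) = \int_A(\delta(u,x,y)-\delta(u,0,y))K_\varphi\,dy$ and $w(x) = \int_{\Rn\setminus A}(\cdots)K_\varphi\,dy$ are subsolutions of $\M^+_{\mathcal{L}_0(\varphi)} \ge 0$. The measure estimate $|D| \le (1-\eta)|B_1|$ for $D=\{v\ge 1-\bar\theta\}$ is forced by applying the local boundedness \Cref{thm:local_boundedness} to a truncation of $w$: were $D$ to occupy too much of $B_1$, $w$ would be uniformly below $-c$ on most of $B_1$, forcing $w(0)<-c/2$ and contradicting $w(0)=0$. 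Then the weak Harnack inequality \cite[Theorem 4.7]{KL20} applied to $(1-v)_+$, together with this measure estimate, yields $v \le 1-\theta$ (hence $P \le 1-\theta$) in $B_{1/2}$; iteration and rescaling via \Cref{lem:rescaled} deliver $P(x) \le C\,2^{k\bar\alpha}$ for $|x|\approx 2^{k}$, all $k\in\mathbb{Z}$. The exponent $\bar\alpha$ is a Harnack-type exponent emerging from this per-scale decay factor $(1-\theta)$, not from any modulus of continuity inherited from~(i). Your invocation of the Krylov--Safonov H\"older estimate is aimed at the wrong object — applied to $w_x$ it provides no useful control of the singular integral near $y=0$, whereas the paper applies the same technology to the \emph{auxiliary functions $v,w$} to detect a quantitative decay of $P$ across scales.

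Your observation about the comparability of $P$ and $N$ from~(ii), and your use of the rescaling machinery of \Cref{section:WS}, are both correct and also appear in the paper, but they are preparatory. The rough bound $0\le P\le C\psi(R)$ in $B_R$ for $R\ge 1$ \emph{is} obtained by a direct integral estimate similar in spirit to what you propose (splitting at $|y|=1$, using second-difference Taylor near $0$ and first-difference cancellation far away, then rescaling), but this only covers $|x|\ge 1$; the decay at small $|x|$ requires the PDE-based oscillation argument.
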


\begin{proof}
We first claim that, for all $R \geq 1$,
\begin{equation} \label{eq:P,N}
0 \leq P \leq C \psi(R) \quad \text{and} \quad 0 \leq N \leq C \psi(R) \quad\text{in}~ B_R.
\end{equation}
Let us provide the proof of \eqref{eq:P,N} for the most delicate case $d = 2$. The other cases $d = 0$ and $d = 1$ are obtained in a very similar way.

Let $x_1 = x \in B_1$ and $x_2 = 0$. If $y \in B_1$, then we have
\begin{equation*}
\ve \delta(u, x_1, y) - \delta(u, x_2, y) \ve \leq \ve y \ve^2 \ve D^2 u(x_1^\ast) - D^2 u(x_2^\ast) \ve,
\end{equation*}
where $x_i^\ast$, $i = 1, 2$, is a point lying in between $x_i+y$ and $x_i - y$. By the assumption (i), we obtain
\begin{align*}
\ve D^2 u(x_1^\ast) - D^2 u(x_2^\ast) \ve 
&\leq [u]_{\varphi+\alpha;B_3} \varphi(\ve x_1^\ast - x_2^\ast \ve) \ve x_1^\ast - x_2^\ast \ve^{\alpha-2} \\
&\leq C_1 \psi(3) 3^{-\alpha} \varphi(\ve x_1^\ast - x_2^\ast \ve) \ve x_1^\ast - x_2^\ast \ve^{\alpha-2}.
\end{align*}
Recall that $\alpha$ was chosen so that \eqref{eq:alpha} holds, which yields that $\varphi(r) r^{\alpha-2}$ is almost increasing. Since $\ve x_1^\ast - x_2^\ast \ve \leq 3$, we have $\varphi(\ve x_1^\ast - x_2^\ast \ve) \ve x_1^\ast - x_2^\ast \ve^{\alpha-2} \leq c^{-1} \varphi(3) 3^{\alpha-2} \leq ac^{-1} 3^\alpha$, with the help of the weak scaling property \eqref{a:WS}. Thus, we have
\begin{equation} \label{eq:B_1}
\ve \delta(u, x, y) - \delta(u, 0, y) \ve \leq C \ve y \ve^2.
\end{equation}
On the other hand, if $y \in \Rn \setminus B_1$, then by the assumption (i) and the weak scaling property \eqref{a:WS}, we obtain
\begin{align} \label{eq:B_1^c}
\begin{split}
\ve \delta(u, x, y) - \delta(u, 0, y) \ve 
&\leq \ve u(x+y) - u(y) \ve + \ve u(x-y) - u(y) \ve + 2 \ve u(x) - u(0) \ve \\
&\leq 4[u]_{1; B_{2\ve y \ve}} \ve x \ve \leq 4C_1 \varphi(\ve 2y \ve) \psi(\ve 2y \ve) \ve 2y \ve^{-1} \\
&\leq C \varphi(\ve y \ve) \psi(\ve 2y \ve) \ve y \ve^{-1}.
\end{split}
\end{align}
It follows from \eqref{eq:B_1}, \eqref{eq:B_1^c}, and \Cref{lem:c_varphi} (i), that 
\begin{align*}
P(x)
&\leq C \int_{B_1} \ve y \ve^2 \frac{c_\varphi}{\ve y \ve^n \varphi(\ve y \ve)} dy + C \int_{\Rn \setminus B_1} \varphi(\ve y \ve) \psi(\ve 2y \ve) \ve y \ve^{-1} \frac{c_\varphi}{\ve y \ve^n \varphi(\ve y \ve)} dy \\
&\leq C + C \int_1^\infty \psi(2r) r^{-2} dr.
\end{align*}
Since $M_\psi < \bar{\alpha}$, by definition of $M_\psi$, there is a small constant $\varepsilon \geq 0$ such that $M_\psi + \varepsilon < \bar{\alpha}$ and $r \to \psi(r) r^{-M_\psi - \varepsilon}$ is almost decreasing. Thus, for $r \geq 1$, we have $\psi(r) \leq C r^{M_\psi + \varepsilon}$, and hence $\psi(r) r^{-2}$ is integrable in $[1, \infty)$. Therefore, we arrive at $P \leq C$ in $B_1$, which proves the claim \eqref{eq:P,N} for $R = 1$. In order to prove \eqref{eq:P,N} for all $R \geq 1$, we consider the rescaled function $\bar{u}(\bar{x}) = \frac{1}{\varphi(\rho) \psi(\rho)} u(\rho \bar{x})$ for $\rho = R$. Then \Cref{lem:rescaled} shows that $\bar{u}$ satisfies the assumptions (i), (ii), and (iii), with the same constant $C_1$, but with $\varphi$ and $\psi$ replaced by $\bar{\varphi}$ and $\bar{\psi}$, respectively. By the same argument above, we have
\begin{equation*}
\bar{P}(\bar{x}) := \int_\Rn \left( \delta(\bar{u}, \bar{x}, \bar{y}) - \delta(\bar{u}, 0, \bar{y}) \right)_+ \frac{c_{\bar{\varphi}}}{\ve \bar{y} \ve^n \bar{\varphi}(\ve 
\bar{y} \ve)} d\bar{y} \leq C \quad\text{for}~ \bar{x} \in B_1.
\end{equation*}
Scaling back and using \Cref{lem:c_varphi} (i), we arrive at
\begin{align*}
P(x) 
&= \int_\Rn (\delta(u, x, y) - \delta(u, 0, y)))_+ \frac{c_\varphi}{\ve y \ve^n \varphi(\ve y \ve)} dy \\
&= \frac{c_\varphi}{c_{\bar{\varphi}}} \psi(\rho) \int_\Rn \left( \delta(\bar{u}, \bar{x}, \bar{y}) - \delta(\bar{u}, 0, \bar{y}) \right)_+ \frac{c_{\bar{\varphi}}}{\ve \bar{y} \ve^n \bar{\varphi}(\ve 
\bar{y} \ve)} d\bar{y} \\
&\leq a^2 \frac{2-\sigma_1}{2-\sigma_2} \psi(\rho) \bar{P}(\bar{x}) \leq C \psi(\rho) \quad\text{for}~ x \in B_\rho,
\end{align*}
and this proves \eqref{eq:P,N} for $P$. The bound for $N$ in \eqref{eq:P,N} can be obtained in a similar way. Note that the constant $C$ may depend on $\sigma_1$ and $\sigma_2$, but this constant is not relevant to the uniform estimates in the main theorems.

We have proved that
\begin{equation} \label{eq:P_rings}
0 \leq P \leq C\psi(2^k) \leq C 2^{k \bar{\alpha}} \quad\text{in}~ B_{2^k}(0)
\end{equation}
for all $k \geq 0$, since $\psi(2^k) \leq C (2^k)^{M_\psi+\varepsilon} \leq C 2^{k\bar{\alpha}}$. We next prove that \eqref{eq:P_rings} holds for all $k \leq 0$, from which the lemma will follow. If we show that $P \leq (1-\theta)C$ in $B_{1/2}$, then \eqref{eq:P_rings} for all $k$ will follow by scaling and iteration argument. Dividing $u$ by $C$, let us assume that $P \leq 1$ in $B_1$ and show that $P \leq 1-\theta$ in $B_{1/2}$.

Let $x_0 \in B_{1/2}$ be a point where the supremum of $P$ in $B_{1/2}$ is attained, and define the set
\begin{equation*}
A = \lbrace y \in \Rn: \delta(u, x, y) - \delta(u, 0, y) > 0 \rbrace.
\end{equation*}
We define
\begin{equation*}
v(x) := \int_A \left( \delta(u, x, y) - \delta(u, 0, y) \right) \frac{c_\varphi}{\ve y \ve^n \varphi(\ve y \ve)} dy
\end{equation*}
and define the set $D := \lbrace x \in B_1 : v \geq (1-\bar{\theta}) \rbrace$, where $\bar{\theta} = \lambda / (4\Lambda)$. We claim that there is a small constant $\eta > 0$ such that
\begin{equation} \label{eq:D}
\ve D \ve \leq (1-\eta) \ve B_1 \ve.
\end{equation}
Assume to the contrary that $\ve D \ve > (1-\eta) \ve B_1 \ve$ for some small constant $\eta$ to be determined later. Let us consider the function $w$ given by
\begin{equation*}
w(x) := \int_{\Rn \setminus A} \left( \delta(u, x, y) - \delta(u, 0, y) \right) \frac{c_\varphi}{\ve y \ve^n \varphi(\ve y \ve)} dy.
\end{equation*}
We approximate $\chi_{\Rn \setminus A}(y) \frac{c_\varphi}{\ve y \ve^n \varphi(\ve y \ve)}$ by $L^1$ functions $\mu$ with compact support, use the assumption (iii), and then use the stability result \cite[Lemma 4.3]{CS11b} to obtain that
\begin{equation} \label{eq:subsolution}
\M^+_{\mathcal{L}_0(\varphi)} w \geq 0 \quad\text{in}~ \Rn.
\end{equation}
Moreover, using the relation $v + w = P - N$, we have $0 \leq P-v \leq 1 - (1-\bar{\theta}) = \bar{\theta}$ in $D$. Since the assumption (ii) shows that $P$ and $N$ are comparable, i.e., $\frac{\lambda}{\Lambda} P \leq N \leq \frac{\Lambda}{\lambda} P$, we obtain
\begin{equation} \label{eq:w_D}
w = (P-v) - N \leq \bar{\theta} - \frac{\lambda}{\Lambda} P \leq \bar{\theta} - \frac{\lambda}{\Lambda} (1-\bar{\theta}) \leq - \frac{\lambda}{2\Lambda} =: -c \quad\text{in}~ D.
\end{equation}
Let us consider the function $\bar{w} = (w(r\cdot) + c)_+$ with $r > 0$ small. Then by \eqref{eq:subsolution}, $\M^+_{\mathcal{L}_0(\varphi)} \bar{w} \geq 0$ in $\Rn$, and we can make $\Ve \bar{w} \Ve_{L^1(\Rn, \omega)}$ as small as we want by taking sufficiently small $r$ and $\eta$, where $\omega$ is the weight function
\begin{equation*}\omega(y) = \frac{c_\varphi}{1+\ve y \ve^n \varphi(\ve y \ve)}.
\end{equation*}
Indeed, \eqref{eq:w_D} gives $\bar{w} = 0$ in $D/r$, which covers most of $B_{1/r}$, and \eqref{eq:P,N} allows us to control the weighted integral outside the ball $B_{1/r}$. Applying \Cref{thm:local_boundedness} to $\bar{w}$, we obtain $w(0) + c = \bar{w}(0) \leq c/2$, which yields a contradiction since $w(0) = 0$ by definition. Therefore, \eqref{eq:D} holds for some small constant $\eta > 0$.

We use the assumption (iii) and \cite[Lemma 4.3]{CS11b} again to approximate $\chi_A(y) \frac{c_\varphi}{\ve y \ve^n \varphi(\ve y \ve)}$ by $L^1$ functions $\mu$ with compact support. Consequently, we have $\M^+_{\mathcal{L}_0(\varphi)} v \geq 0$ in $\Rn$. Following the computation in the proof of \cite[Lemma 4.9]{KL20} and using \eqref{eq:P,N}, for given $\delta_0 > 0$ we can find $\bar{\alpha} \in (0,1)$ small enough so that $\M^-_{\mathcal{L}_0(\varphi)} \bar{v} \leq \delta_0$ in $B_{3/4}$, where $\bar{v} = (1-v)_+$. By applying \cite[Theorem 4.7]{KL20} to $\bar{v}$ and using \eqref{eq:D}, we obtain that
\begin{equation*}
\eta \ve B_1 \ve \leq \ve \lbrace (1-v)_+ > \bar{\theta} \rbrace \ve \leq C (1-v) \quad\text{in}~ B_{1/2},
\end{equation*}
which concludes that $v \leq 1-\eta/C =: 1-\theta$ in $B_{1/2}$.
\end{proof}

To finish the proof of \Cref{thm:Liouville}, we make use of the potential theory \cite{BK15} for linear integro-differential operators. The result \cite{BK15} only provides the estimates which are not robust with respect to the order of differentiability, but it is enough to conclude \Cref{thm:Liouville}.

\begin{proof} [Proof of \Cref{thm:Liouville}]
By \Cref{lem:PN}, we have $L_\varphi u \in C^{\bar{\alpha}}(\overline{B_{1/2}})$ and $\Ve L_\varphi u \Ve_{\bar{\alpha}; B_{1/2}} \leq C$. Thus, the potential theory shows that $\Ve u \Ve_{\varphi+\bar{\alpha}; B_{1/4}} \leq C$, and in particular, $[u]_{\varphi+\bar{\alpha}; B_{1/4}} \leq C$. We observed in \Cref{lem:rescaled} that the rescaled function $\bar{u}(\bar{x}) = \frac{1}{\varphi(\rho) \psi(\rho)} u(\rho \bar{x})$ satisfies the assumptions (i), (ii), and (iii), with $\varphi$ and $\psi$ replaced by $\bar{\varphi}$ and $\bar{\psi}$, respectively. Thus, the same argument above is applied to $\bar{u}$, resulting in $[\bar{u}]_{\bar{\varphi}+\bar{\alpha}; B_{1/4}} \leq C$. Scaling back, we arrive at $[u]_{\varphi+\bar{\alpha}; B_{\rho/4}} \leq C \psi(\rho) \rho^{-\bar{\alpha}}$ for all $\rho \geq 1$. Since $I_\psi \subset (0, \bar{\alpha})$, by taking limit $\rho \to +\infty$, we arrive at $[u]_{\varphi+\bar{\alpha}; \Rn} = 0$ which conclude that $u$ is a polynomial of degree $d(= \lfloor m_{\varphi \psi} \rfloor = \lfloor m_{\varphi + \bar{\alpha}} \rfloor)$.
\end{proof}

\section{Evans–Krylov type estimates} \label{section:EK}

We prove \Cref{thm:Evans-Krylov} in this section utilizing the Liouville type theorem. The following proposition is the key ingredient in the proof of \Cref{thm:Evans-Krylov}, and it will also be used for the Schauder type estimates in the next section.

\begin{proposition} \label{prop:EK_intermediate}
Let $\bar{\alpha}$ be the constant in \Cref{thm:Liouville}, assume \eqref{a:index}, and let $\alpha \in (0, m_\psi)$ satisfy \eqref{eq:alpha}. Let $\I$ be a concave translation invariant operator which is elliptic with respect to $\mathcal{L}_0(\varphi)$, and suppose that $f \in C^\psi(\overline{B_1})$ and $v \in C^{\varphi\psi}(\overline{B_1}) \cap L^\infty(\Rn)$ satisfy
\begin{equation*}
[f]_{\psi; B_1} + \sup_{L \in \mathcal{L}_0(\varphi)} [Lv]_{\psi; B_1} \leq C_0.
\end{equation*}
If $u \in C^{\varphi + \alpha}(\Rn)$ solves
\begin{equation*}
\I(u+v) = f \quad\text{in}~ B_1,
\end{equation*}
then $u \in C^{\varphi \psi}(\overline{B_{1/2}})$ and
\begin{equation*}
[u]_{\varphi \psi; B_{1/2}} \leq C \left( \Ve u \Ve_{\varphi + \alpha; \Rn} + C_0 \right),
\end{equation*}
where $C$ is a universal constant depending only on $n$, $\lambda$, $\Lambda$, $a$, $\sigma_0$, $\psi$, and $m_{\varphi \psi} - \lfloor m_{\varphi \psi} \rfloor$.
\end{proposition}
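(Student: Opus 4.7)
The plan is a blow-up/compactness argument that reduces the claim to the Liouville theorem \Cref{thm:Liouville}. Set $d:=\lfloor m_{\varphi\psi}\rfloor$. After normalizing so that $\Ve u\Ve_{\varphi+\alpha;\Rn}+C_0\le 1$, argue by contradiction: suppose there exist sequences $(\I_k,u_k,v_k,f_k)$ satisfying all hypotheses but with
\[
\Theta_k:=\sup_{z\in B_{1/2},\,0<\rho<1/4}\frac{\inf_{p\in\mathcal{P}_d}\Ve u_k-p\Ve_{L^\infty(B_\rho(z))}}{\varphi(\rho)\psi(\rho)}\longrightarrow\infty,
\]
where $\mathcal{P}_d$ denotes the polynomials of degree at most $d$; a Campanato-type characterization of $C^{\varphi\psi}$ identifies $\Theta_k$ with $[u_k]_{\varphi\psi;B_{1/2}}$ up to the harmless additive quantity $\Ve u_k\Ve_{\varphi+\alpha;\Rn}$. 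Using the largest-admissible-scale trick, select $(z_k,r_k)\in B_{1/2}\times(0,1/4)$ with $r_k$ maximal so that the ratio is $\ge k$; the Taylor estimate $\inf_p\Ve u_k-p\Ve_{L^\infty(B_\rho(z))}\le C\varphi(\rho)\rho^\alpha$, following from $\Ve u_k\Ve_{\varphi+\alpha;\Rn}\le 1$ and $\lfloor m_{\varphi+\alpha}\rfloor=d$, forces $r_k\downarrow 0$. Let $P_k$ be a near-optimal $\mathcal{P}_d$-approximation of $u_k$ in $B_{r_k}(z_k)$, set $M_k:=\Ve u_k-P_k\Ve_{L^\infty(B_{r_k}(z_k))}\asymp k\,\varphi(r_k)\psi(r_k)$, and define
\[
\bar u_k(\bar x):=\frac{u_k(z_k+r_k\bar x)-P_k(z_k+r_k\bar x)}{M_k},\quad \bar\varphi_k(r):=\frac{\varphi(r_k r)}{\varphi(r_k)},\quad \bar\psi_k(r):=\frac{\psi(r_k r)}{\psi(r_k)}.
\]

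\Cref{prop:rescaled_equation} rewrites the equation as $\bar\I_k(\bar u_k+\tilde v_k)=\tilde f_k$ on $B_1$, with $\bar\I_k$ concave, translation invariant, and elliptic with respect to $\mathcal{L}_0(\bar\varphi_k)$ with the same $\lambda,\Lambda$. The polynomial subtraction is harmless because for $d\le 2$ every $P\in\mathcal{P}_d$ satisfies $\delta(P(\cdot+h)-P,x,y)\equiv 0$, so operators in $\mathcal{L}_0(\bar\varphi_k)$ annihilate increments of shifts of $P_k$. From translation invariance of $\I_k$, ellipticity, and the hypothesis $[f_k]_\psi+\sup_L[Lv_k]_\psi\le 1$ (using the identity $L(w(\cdot+h))=(Lw)(\cdot+h)$), we derive on the original scale
\[
\M^-_{\mathcal{L}_0(\varphi)}\bigl(u_k(\cdot+h)-u_k\bigr)\le 2\psi(\ve h\ve),\qquad \M^+_{\mathcal{L}_0(\varphi)}\bigl(u_k(\cdot+h)-u_k\bigr)\ge -2\psi(\ve h\ve),
\]
and from concavity of $\I_k$ with Jensen's inequality together with the ellipticity splitting $\M^+(w_1+w_2)\le\M^+w_1+\M^+w_2$,
\[
\M^+_{\mathcal{L}_0(\varphi)}\Bigl(\fint u_k(\cdot+h)\mu(h)\,dh-u_k\Bigr)\ge -C\psi(R_\mu)
\]
for every nonnegative $\mu\in L^1_c$ supported in $B_{R_\mu}$. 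Transferring these via \eqref{eq:rescaled_Pucci_operator} together with the cancellation $\varphi(r)c_{\bar\varphi}/c_\varphi\le 1+a^2$ of \Cref{lem:c_varphi}(ii) produces the corresponding Pucci and averaged inequalities for $\bar u_k$, whose right-hand sides are driven to zero in the limit by the largest-admissible-scale selection together with the tuned normalization $M_k$.

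The same selection also yields the growth $\Ve\bar u_k\Ve_{L^\infty(B_R)}\le C\bar\varphi_k(R)\bar\psi_k(R)$ for every $R\ge 1$, upgraded via \Cref{lem:rescaled_adimnorm} to the adim bound $\Ve\bar u_k\Ve'_{\bar\varphi_k+\alpha;B_R}\le C\bar\varphi_k(R)\bar\psi_k(R)$ demanded by hypothesis (i) of \Cref{thm:Liouville}. Applying \Cref{lem:limit_varphi} to $\varphi$ and the analogous almost-monotonicity argument to $\psi$ (available because $I_\psi\subset(0,\bar\alpha)$), we extract on a subsequence locally uniform limits $\bar\varphi_k\to\bar\varphi$ and $\bar\psi_k\to\bar\psi$ preserving the same weak-scaling constants; then Arzel\`a--Ascoli with the growth bound yields $\bar u_k\to\bar u$ locally uniformly on $\Rn$. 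Standard stability for nonlocal Pucci operators (cf.~\cite{CS11b}) passes the three families of inequalities to the limit, so $\bar u$ satisfies hypotheses (i)--(iii) of \Cref{thm:Liouville} with $\bar\varphi,\bar\psi$ in place of $\varphi,\psi$. Hence $\bar u\in\mathcal{P}_d$; but by construction $0$ is a best $\mathcal{P}_d$-approximation of $\bar u$ on $B_1$ with $\Ve\bar u\Ve_{L^\infty(B_1)}\asymp 1$, whence $\bar u\notin\mathcal{P}_d$, the desired contradiction. The chief obstacle is that the rescaled problems live in a \emph{moving} family of classes $\mathcal{L}_0(\bar\varphi_k)$: one must verify that Liouville applies uniformly across this family (its constants depend only on $n,\sigma_0,a,\lambda,\Lambda,\psi$, which are preserved under rescaling) and, more delicately, that the normalization $M_k$ coupled to the largest-admissible-scale choice of $r_k$ truly forces the right-hand sides of the blown-up Pucci and averaged inequalities to vanish in the limit---this hinges entirely on the sharp control $\varphi(r)c_{\bar\varphi}/c_\varphi\le 1+a^2$ of \Cref{lem:c_varphi}(ii).
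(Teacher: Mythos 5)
Your overall strategy (contradiction, blow-up at a well-chosen scale, Liouville) is the same as the paper's, but you replace the paper's blow-up quantity by a Campanato-type $L^\infty$-approximation quantity, and this substitution creates a genuine gap in the compactness step.

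The paper normalizes the blow-up by the seminorm-driven quantity
\[
\theta(\rho)=\sup_k\sup_{z\in B_{1/2}}\sup_{\rho'>\rho}\frac{(\rho')^\alpha}{\psi(\rho')}[u_k]_{\varphi+\alpha;B_{\rho'}(z)},
\]
and defines $\bar u_j$ by dividing by $\varphi(\rho_j)\psi(\rho_j)\theta(\rho_j)$. Because $\theta$ is built out of the $C^{\varphi+\alpha}$ seminorms themselves and is monotone, one immediately gets $[\bar u_j]_{\bar\varphi_j+\alpha;B_R}\le CR^{M_\psi-\alpha}$, which is exactly hypothesis (i) of \Cref{thm:Liouville} and also what feeds the Arzel\`a--Ascoli step. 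Your normalization is $M_k=\Ve u_k-P_k\Ve_{L^\infty(B_{r_k}(z_k))}\asymp k\varphi(r_k)\psi(r_k)$. Rescaling the $C^{\varphi+\alpha}$ seminorm gives
\[
[\bar u_k]_{\bar\varphi_k+\alpha;B_R}=\frac{\varphi(r_k)r_k^\alpha}{M_k}\,[u_k]_{\varphi+\alpha;B_{r_kR}(z_k)}\le\frac{r_k^\alpha}{k\,\psi(r_k)},
\]
using only $\Ve u_k\Ve_{\varphi+\alpha;\Rn}\le1$. The Taylor estimate $M_k\le C\varphi(r_k)r_k^\alpha$ combined with the selector gives $k\psi(r_k)\le Cr_k^\alpha$, hence $r_k^\alpha/(k\psi(r_k))\ge 1/C$---a \emph{lower} bound, not an upper bound. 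Nothing in your scheme prevents $r_k^\alpha/(k\psi(r_k))\to\infty$, so the $C^{\bar\varphi_k+\alpha}$ seminorms of $\bar u_k$ may blow up, and you lose both the precompactness in $C^{\varphi+\alpha}_{\mathrm{loc}}$ and the growth control needed for Liouville hypothesis (i). Your sentence ``upgraded via \Cref{lem:rescaled_adimnorm} to the adim bound $\Ve\bar u_k\Ve'_{\bar\varphi_k+\alpha;B_R}\le C\bar\varphi_k(R)\bar\psi_k(R)$'' is not a valid step: \Cref{lem:rescaled_adimnorm} is a pure change-of-variables identity between two scales of the same norm, not an a priori estimate that elevates an $L^\infty$ Campanato bound to a H\"older bound. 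To repair this, you would have to inject an interior $C^{\varphi+\alpha}$ estimate for the equation into the Campanato iteration (Caffarelli--Cabr\'e style), which is a nontrivial addition you do not supply; the paper avoids it entirely by choosing the blow-up quantity to be $C^{\varphi+\alpha}$-based from the start.

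A secondary issue: you assert $\bar\psi_k\to\bar\psi$ locally uniformly ``by the analogous argument'' to \Cref{lem:limit_varphi}, but that lemma's equicontinuity step uses the derivative bound \eqref{a:varphi}, which $\psi$ is \emph{not} assumed to satisfy; $I_\psi\subset(0,\bar\alpha)$ gives only two-sided power bounds, not equicontinuity of the rescalings. The paper sidesteps this entirely by applying \Cref{thm:Liouville} with $\psi$ replaced by $r^{M_\psi}$ (using $\bar\psi_j(R)\le CR^{M_\psi}$), which needs no convergence of $\bar\psi_j$ at all. You should adopt that device. The rest of your outline---the Pucci and averaged inequalities and their transfer to the rescaled problem with vanishing right-hand sides (driven by the $1/k$ produced by $M_k\asymp k\varphi(r_k)\psi(r_k)$ and \Cref{lem:c_varphi}(ii)), the polynomial cancellation for $d\le 2$, and the orthogonality contradiction---is on the right track and parallels the paper.
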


\begin{proof}
Assume to the contrary that for each integer $k \geq 0$, there exist $u_k$, $v_k$, $f_k$, and $\I_k$, such that $\I_k (u_k + v_k) = f_k$ in $B_1$ and
\begin{equation*}
[u_k]_{\varphi \psi; B_{1/2}} > k \left( \Ve u_k \Ve_{\varphi + \alpha; \Rn} + C_{0,k} \right),
\end{equation*}
where $C_{0,k} = [f_k]_{\psi; B_1} + \sup_{L \in \mathcal{L}_0(\varphi)} [Lv_k]_{\psi; B_1}$. We may always assume that
\begin{equation} \label{eq:normalizing}
\Ve u_k \Ve_{\varphi + \alpha; \Rn} + C_{0,k} = 1 \quad\text{and}\quad [u_k]_{\varphi\psi; B_{1/2}} > k,
\end{equation}
by considering $\bar{u}_k := K^{-1} u_k$, $\bar{v}_k := K^{-1} v_k$, $\bar{f}_k := K^{-1} f_k$, and $\bar{\I}_k u := K^{-1} \I_k (Ku)$ with $K = \Ve u_k \Ve_{\varphi + \alpha; \Rn} + C_{0,k}$, instead of $u_k, v_k, f_k$, and $\I_k$. Then we have
\begin{equation} \label{eq:supsupsup}
\sup_k \sup_{z \in B_{1/2}} \sup_{\rho > 0} \frac{\rho^\alpha}{\psi(\rho)} [u_k]_{\varphi + \alpha; B_\rho(z)} = +\infty.
\end{equation}
Indeed, if the left hand side of \eqref{eq:supsupsup} has a finite value, say $C_1$, then for all $z \in B_{1/2}$ and for all $\rho \in (0,1)$, we obtain
\begin{equation} \label{eq:supsupsup_finite}
\Ve D^d u_k(z+\cdot) - D^d u_k(z) \Ve_{L^\infty(B_\rho)} \leq [u_k]_{\varphi + \alpha; B_\rho(z)} \varphi(\rho) \rho^{\alpha - d} \leq C_1 \varphi(\rho) \psi(\rho) \rho^{-d},
\end{equation}
where $d := \lfloor m_{\varphi + \bar{\alpha}} \rfloor = \lfloor m_{\varphi + \alpha} \rfloor = \lfloor m_{\varphi\psi} \rfloor$. Since \eqref{eq:supsupsup_finite} contradicts to \eqref{eq:normalizing} for $k$ sufficiently large, the claim \eqref{eq:supsupsup} holds.

We define the function
\begin{equation*}
\theta(\rho) := \sup_k \sup_{z \in B_{1/2}} \sup_{\rho' > \rho} \frac{(\rho')^\alpha}{\psi(\rho')} [u_k]_{\varphi + \alpha; B(z, \rho')},
\end{equation*}
which is monotone non-increasing by definition and satisfies $\lim_{\rho \to 0} \theta(\rho) = +\infty$ by \eqref{eq:supsupsup}. Moreover, we know from $\Ve u_k \Ve_{\varphi + \alpha; \Rn} \leq 1$ and the assumption \eqref{eq:alpha} that $\theta(\rho) < +\infty$ for $\rho > 0$. For every positive integer $j$, there are $\rho_j \geq 1/j$, $k_j$, and $z_j \in B_{1/2}$ such that
\begin{equation} \label{eq:sequence_j}
\frac{1}{2} \theta(\rho_j) \leq \frac{1}{2} \theta(1/j) \leq \frac{\rho_j^\alpha}{\psi(\rho_j)} [u_{k_j}]_{\varphi  + \alpha; B(z_j, \rho_j)} \leq \theta(1/j).
\end{equation}
Note that we have $\rho_j \to 0$ as $j \to +\infty$ since otherwise $\frac{\rho_j^\alpha}{\psi(\rho_j)} [u_{k_j}]_{\varphi + \alpha; B(z_j, \rho_j)}$ stays bounded while $\theta(1/j)$ blows up as $j \to +\infty$. We define $p_j$ by
\begin{equation} \label{eq:p_j}
p_j := \mathrm{arg \, min}_{p \in \mathcal{P}_d} \int_{B(z_j, \rho_j)} (u_{k_j} (x) - p_j (x-z))^2 dx,
\end{equation}
where $\mathcal{P}_d$ denotes the linear space of polynomials whose degrees are at most $d$. In other words, the polynomial $p_j$ best fits $u_{k_j}$ in $B(z_j, \rho_j)$ by least squares. Let us consider a blow up sequence
\begin{equation*}
\bar{u}_j(\bar{x}) := \frac{u_{k_j} (z_j + \rho_j \bar{x}) - p_j(\rho_j \bar{x})}{\varphi(\rho_j) \psi(\rho_j) \theta(\rho_j)}.
\end{equation*}
Then it follows from \eqref{eq:p_j} that for all $j \geq 1$,
\begin{equation} \label{eq:optimality}
\int_{B_1} \bar{u}_j(x) q(x) dx = 0 \quad \text{for all}~ q \in \mathcal{P}_d,
\end{equation}
which is the optimality condition for the least squares. By \Cref{lem:rescaled_norm} and \eqref{eq:sequence_j}, we obtain for $\bar{\varphi}_j(r) = \varphi(\rho_j r) / \varphi(\rho_j)$,
\begin{align} \label{eq:u_j}
\begin{split}
[\bar{u}_j]_{\bar{\varphi}_j + \alpha; B_1} 
&= \frac{\rho_j^\alpha}{\psi(\rho_j) \theta(\rho_j)} \left[ \frac{u_{k_j}(z_j + \rho_j \cdot) - p_j(\rho_j \cdot)}{\varphi(\rho_j) \rho_j^\alpha} \right]_{\bar{\varphi}_j + \alpha; B_1} \\
&= \frac{\rho_j^\alpha}{\psi(\rho_j) \theta(\rho_j)} [u_{k_j}]_{\varphi+\alpha; B(z_j, \rho_j)} \geq \frac{1}{2},
\end{split}
\end{align}
where we have used the fact that $d = \lfloor m_{\bar{\varphi}_j + \alpha} \rfloor < m_{\bar{\varphi}_j + \alpha}$ and that $[p_j(\rho_j \cdot)]_{\bar{\varphi}_j + \alpha; B_1} = 0$. The assumptions \eqref{a:index} and \eqref{eq:alpha} are used here.

Recall that \Cref{lem:limit_varphi} shows that there is a subsequence of $\bar{\varphi}_j$, that we call again $\bar{\varphi}_j$, converging locally uniformly to some function $\bar{\varphi}$. We will prove that there is a subsequence of $\bar{u}_j$ converging in $C^{m_{\varphi}+\alpha-\varepsilon}_{\mathrm{loc}}(\Rn)$ to a function $\bar{u} \in C^{\bar{\varphi} + \alpha}_{\mathrm{loc}} (\Rn)$ for some $\varepsilon > 0$ small, and that $\bar{u}$ satisfies all the assumptions in \Cref{thm:Liouville} with $\varphi$ and $\psi$ replaced by $\bar{\varphi}$ and $r^{M_\psi}$, respectively. For this, let us prove the following uniform estimates of $\bar{u}_j$:
\begin{equation} \label{eq:unif_est}
\Ve \bar{u}_j \Ve'_{\bar{\varphi}_j + \alpha; B_R} \leq C \bar{\varphi}_j(R) R^{M_\psi} \quad\text{for all}~ R \geq 1.
\end{equation}
We use \Cref{lem:rescaled_norm}, the fact that $[p_j(\rho_j \cdot)]_{\bar{\varphi}_j + \alpha; B_R} = 0$, and the monotonicity of $\theta$, to obtain
\begin{equation*}
[\bar{u}_j]_{\bar{\varphi}_j + \alpha; B_R} = \frac{\rho_j^\alpha}{\psi(\rho_j) \theta(\rho_j)} [u_{k_j}]_{\varphi+\alpha; B(z_j, \rho_j R)} \leq \frac{\rho_j^\alpha}{\psi(\rho_j) \theta(\rho_j)} \frac{\psi(\rho_j R) \theta(\rho_j R)}{(\rho_j R)^\alpha} \leq \frac{\psi(\rho_j R)}{\psi(\rho_j)} R^{-\alpha}.
\end{equation*}
By the definition of $M_\psi$, we see that $\psi(\rho_j R) / \psi(\rho_j) \leq C R^{M_\psi}$. Thus, we obtain
\begin{equation} \label{eq:unif_est_highorder}
[\bar{u}_j]'_{\bar{\varphi}_j + \alpha; B_R} = \bar{\varphi}_j(R) R^\alpha [\bar{u}_j]_{\bar{\varphi}_j+\alpha; B_R} \leq C \bar{\varphi}_j(R) R^{M_\psi}.
\end{equation}
Moreover, \eqref{eq:unif_est_highorder} with $R = 1$ implies that $\Ve \bar{u}_j - p \Ve_{L^\infty(B_1)} \leq C$ for some $p \in \mathcal{P}_d$. Then \eqref{eq:optimality} gives $\Ve \bar{u}_j \Ve_{L^\infty(B_1)} \leq C$. Therefore, by using the interpolation inequality, the uniform estimates \eqref{eq:unif_est} is obtained, as in \cite{Ser15a}.

Since $m_{\bar{\varphi}_j + \alpha} = m_{\varphi + \alpha} \notin \mathbb{N}$, we have an inclusion $C^{\bar{\varphi}_j + \alpha}(\overline{B_R}) \subset C^{m_\varphi+\alpha}(\overline{B_R})$ with
\begin{equation} \label{eq:convergence}
\Ve u \Ve_{m_\varphi+\alpha; B_R} \leq C(a, R) \Ve u \Ve_{\bar{\varphi}_j+\alpha; B_R}.
\end{equation}
Indeed, since 
\begin{equation*}
\bar{\varphi}_j (\ve x-y \ve) \ve x-y \ve^\alpha \leq a \bar{\varphi}_j(2R) \left( \frac{\ve x-y \ve}{2R} \right)^{\sigma_1} \ve x-y \ve^\alpha \leq a^2 (2R)^{\sigma_2 - \sigma_1} \ve x-y \ve^{m_\varphi+\alpha},
\end{equation*}
we have
\begin{align*}
[u]_{m_\varphi+\alpha; B_R}
&= \sup_{x, y \in B_R} \frac{\ve D^d u(x) - D^d u(y) \ve}{\ve x-y \ve^{m_\varphi+\alpha-2}} \\
&\leq a (2R)^2 \sup_{x, y \in B_R} \frac{\ve D^d u(x) - D^d u(y) \ve}{\bar{\varphi}_j(\ve x-y \ve) \ve x-y \ve^{\alpha-2}} = a(2R)^2 [u]_{\bar{\varphi}_j+\alpha; B_R}.
\end{align*}
Thus, \eqref{eq:unif_est} and \eqref{eq:convergence} shows that for each $R \geq 1$, we find $C > 0$ such that $\Ve \bar{u}_j \Ve'_{m_\varphi+\alpha; B_R} \leq C$. Therefore, the Arzel\`a-Ascoli theorem and the standard diagonal sequence argument yields that there is a subsequence of $\bar{u}_j$ converging locally in $C^{m_\varphi+\alpha-\varepsilon}(\Rn)$ to some function $\bar{u} \in C^{\bar{\varphi}+\alpha}_{\mathrm{loc}}(\Rn)$, where $\varepsilon$ is a small constant such that $d < m_\varphi + \alpha - \varepsilon$.

Let us next check that $\bar{u}$ satisfies all the assumptions (i), (ii), and (iii), in \Cref{thm:Liouville}, with $\varphi$ and $\psi$ replaced by $\bar{\varphi}$ and $r^{M_\psi}$, respectively. The assumption (i) is obtained by simply passing to the limit \eqref{eq:unif_est}. In order to check (iii), we set
\begin{equation*}
\bar{w}_j(\bar{x}) := u_{k_j}(z_j + \rho_j \bar{x}) + v_{k_j}(z_j + \rho_j \bar{x}).
\end{equation*}
We know from \Cref{prop:rescaled_equation} that the function $\bar{w}_j$ satisfies
\begin{equation} \label{eq:w_j}
\bar{\I}_j \bar{w}_j = \bar{f}_j \quad\text{in} ~ B(-z_j/\rho_j, 1/\rho_j),
\end{equation}
where
\begin{align*}
\bar{\I}_j w (\bar{x}) 
&:= \varphi(\rho_j) \frac{c_{\bar{\varphi}_j}}{c_\varphi} \I_{k_j} (w((\cdot-z_j)/\rho_j)) (z_j + \rho_j \bar{x}) \quad\text{and} \\
\bar{f}_j (\bar{x})
&:= \varphi(\rho_j) \frac{c_{\bar{\varphi}_j}}{c_\varphi} f_{k_j} (z_j + \rho_j \bar{x}),
\end{align*}
and that $\bar{\I}_j$ is elliptic with respect to $\mathcal{L}_0(\bar{\varphi}_j)$. Let $\mu$ be a non-negative $L^1(\Rn)$ function with compact support. Then we have
\begin{equation} \label{eq:w_ellipticity}
\M^+_{\mathcal{L}_0(\bar{\varphi}_j)} \left( \fint \bar{w}_j(\cdot + \bar{h}) d\mu(\bar{h}) - \bar{w}_j \right)(\bar{x}) \geq \bar{\I}_j \left( \fint \bar{w}_j(\cdot + \bar{h}) d\mu(\bar{h}) \right)(\bar{x}) - \bar{\I}_j \bar{w}_j(\bar{x}).
\end{equation}
Since $\I$ is concave and translation invariant, so is $\bar{\I}_j$, and hence
\begin{align} \label{eq:concave_trans_inv}
\begin{split}
\bar{\I}_j \left( \fint \bar{w}_j(\cdot + \bar{h}) d\mu(\bar{h}) \right)(\bar{x}) - \bar{\I}_j \bar{w}_j(\bar{x})
&\geq \fint \bar{\I}_j (\bar{w}_j(\cdot + \bar{h}))(\bar{x}) d\mu(\bar{h}) - \bar{\I}_j \bar{w}_j(\bar{x}) \\
&= \fint \bar{\I}_j \bar{w}_j (\bar{x} + \bar{h}) d\mu(\bar{h}) - \bar{\I}_j \bar{w}_j(\bar{x}).
\end{split}
\end{align}
We take $j$ sufficiently large so that $\mathrm{supp} \, \mu \subset B(-z_j/\rho_j, 1/\rho_j)$, then by \eqref{eq:w_j} and \eqref{eq:normalizing}, we obtain
\begin{align} \label{eq:f_Holder}
\begin{split}
\fint \bar{\I}_j \bar{w}_j (\bar{x}+\bar{h}) d\mu(\bar{h}) - \bar{\I}_j \bar{w}_j(\bar{x}) 
&= \fint \left( \bar{f}_j (\bar{x} + \bar{h}) - \bar{f}_j(\bar{x}) \right) d\mu(\bar{h}) \\
&\geq -\varphi(\rho_j) \frac{c_{\bar{\varphi}_j}}{c_\varphi} \fint \psi(\ve \rho_j \bar{h} \ve) d\mu(\bar{h}).
\end{split}
\end{align}
Combining \eqref{eq:w_ellipticity}, \eqref{eq:concave_trans_inv}, and \eqref{eq:f_Holder}, we get
\begin{equation*}
-\varphi(\rho_j) \frac{c_{\bar{\varphi}_j}}{c_\varphi} \fint \psi(\ve \rho_j \bar{h} \ve) d\mu(\bar{h}) \leq \M^+_{\mathcal{L}_0(\bar{\varphi}_j)} \left( \fint \bar{w}_j(\cdot + \bar{h}) d\mu(\bar{h}) - \bar{w}_j \right)(\bar{x}).
\end{equation*}
Set
\begin{equation*}
\bar{v}_j(\bar{x}) := \frac{v_{k_j}(z_j + \rho_j \bar{x})}{\varphi(\rho_j) \psi(\rho_j) \theta(\rho_j)},
\end{equation*}
then we have $\bar{w}_j = \varphi(\rho_j) \psi(\rho_j) \theta(\rho_j) (\bar{u}_j + \bar{v}_j) + p_j(\rho_j \cdot)$. Since $d \leq 2$, we obtain
\begin{equation*}
\delta(p_j(\rho_j \cdot), \bar{x} + \bar{h}, \bar{y}) - \delta(p_j(\rho_j \cdot), \bar{x}, \bar{y}) = 0
\end{equation*}
for all $\bar{x}, \bar{y}, \bar{h} \in \Rn$, and hence,
\begin{align*}
- \frac{1}{\theta(\rho_j)}\frac{c_{\bar{\varphi}_j}}{c_\varphi} \fint \bar{\psi}_j (\ve \bar{h} \ve) d\mu(\bar{h})
\leq&\, \M^+_{\mathcal{L}_0(\bar{\varphi}_j)} \left( \fint (\bar{u}_j + \bar{v}_j)(\cdot + \bar{h}) d\mu(\bar{h}) - (\bar{u}_j + \bar{v}_j) \right)(\bar{x}) \\
\leq&\, \M^+_{\mathcal{L}_0(\bar{\varphi}_j)} \left( \fint \bar{u}_j(\cdot + \bar{h}) d\mu(\bar{h}) - \bar{u}_j \right)(\bar{x}) \\
&+ \M^+_{\mathcal{L}_0(\bar{\varphi}_j)} \left( \fint \bar{v}_j(\cdot + \bar{h}) d\mu(\bar{h}) - \bar{v}_j \right)(\bar{x})
\end{align*}
for all $\bar{x} \in B(-z_j / \rho_j, 1/\rho_j)$. Using \eqref{eq:rescaled_Pucci_operator} and \eqref{eq:normalizing}, we have for $x = z_j + \rho_j \bar{x} \in B_1$,
\begin{align}
&\M^+_{\mathcal{L}_0(\bar{\varphi}_j)} \left( \fint \bar{v}_j(\cdot + \bar{h}) d\mu(\bar{h}) - \bar{v}_j \right)(\bar{x}) \nonumber \\
&= \frac{1}{\psi(\rho_j) \theta(\rho_j)} \frac{c_{\bar{\varphi}_j}}{c_\varphi} \M^+_{\mathcal{L}_0(\varphi)} \left( \fint \left( v_{k_j}(\cdot + h) - v_{k_j} \right) d\mu \left( \frac{h}{\rho_j} \right) \right) (x) \nonumber \\
&\leq \frac{1}{\theta(\rho_j)} \frac{c_{\bar{\varphi}_j}}{c_\varphi} \fint \bar{\psi}_j (\ve \bar{h} \ve) d\mu(\bar{h}). \label{eq:sup_Lv}
\end{align}
Therefore, we obtain
\begin{equation} \label{eq:Mu}
- \frac{1}{\theta(\rho_j)} \frac{c_{\bar{\varphi}_j}}{c_\varphi} \fint \bar{\psi}_j (\ve \bar{h} \ve) d\mu(\bar{h}) \leq \M^+_{\mathcal{L}_0(\bar{\varphi}_j)} \left( \fint \bar{u}_j(\cdot + \bar{h}) d\mu(\bar{h}) - \bar{u}_j \right)(\bar{x}).
\end{equation}
Since $\theta(\rho_j) \to +\infty$, $c_{\bar{\varphi}_j} \to c_\varphi$, and $\fint \bar{\psi}_j(\ve \bar{h} \ve) d\mu(\bar{h}) \to \fint \bar{\psi} (\ve \bar{h} \ve) d\mu(\bar{h})$ as $j \to +\infty$, the left hand side of \eqref{eq:Mu} converges to 0 as $j \to +\infty$. For the right hand side of \eqref{eq:Mu}, we use the dominated convergence theorem to pass to the limit, which is guaranteed by the growth control \eqref{eq:unif_est}. Therefore, we arrive at
\begin{equation*}
0 \leq \M^+_{\mathcal{L}_0(\bar{\varphi})} \left( \fint \bar{u}(\cdot + \bar{h}) d\mu(\bar{h}) - u \right)\quad\text{in}~ \Rn,
\end{equation*}
which gives the assumption (iii). The assumption (ii) is obtained in a similar way.

Since we have checked all the assumptions of \Cref{thm:Liouville}, we conclude that $\bar{u}$ is a polynomial of degree $\lfloor m_{\bar{\varphi}} + M_\psi \rfloor$, which is equal to $d$ by the assumption \eqref{a:index}. Passing \eqref{eq:optimality} to the limit, we see that $\bar{u}$ is orthogonal to every polynomial of degree $d$ in $B_1$. This shows that $\bar{u}$ must be zero, which contradicts to the limit of \eqref{eq:u_j}. Therefore, we obtain the desired result.
\end{proof}

We are now ready to prove \Cref{thm:Evans-Krylov} by using \Cref{prop:EK_intermediate}.

\begin{proof} [Proof of \Cref{thm:Evans-Krylov}]
Let $z \in B_1$ and $\rho \in (0,1)$ be such that $B_\rho(z) \subset B_1$. By \Cref{prop:rescaled_equation}, the rescaled function $\bar{u}(\bar{x}) := u(z+\rho \bar{x})$ solves the equation $\bar{\I} \bar{u} = \bar{f}$ in $B_1$, where
\begin{equation*}
\bar{\I} \bar{u}(\bar{x}) := \varphi(\rho) \frac{c_{\bar{\varphi}}}{c_\varphi} \I (\bar{u}((\cdot-z)/\rho)) (z+\rho \bar{x})
\end{equation*}
is elliptic with respect to $\mathcal{L}_0(\bar{\varphi})$ and $\bar{f} (\bar{x}) := \varphi(\rho) \frac{c_{\bar{\varphi}}}{c_\varphi} f(z+\rho \bar{x})$. Let $\eta$ be a cut-off function such that $\eta = 1$ in $B_{3/4}$ and $\eta = 0$ outside $B_1$. Since the function $\eta \bar{u} \in C^{\varphi +\alpha}(\Rn)$ solves
\begin{equation*}
\bar{\I} (\eta \bar{u} + \bar{v}) = \bar{f} \quad\text{in}~ B_{1/2},
\end{equation*}
where $\bar{v} = (1-\eta)\bar{u}$, we have
\begin{equation*}
[\eta \bar{u}]_{\bar{\varphi} \bar{\psi}; B_{1/4}} \leq C \left( \Ve \eta \bar{u} \Ve_{\bar{\varphi} +\alpha; \Rn} + [\bar{f}]_{\bar{\psi}; B_{1/2}} + \sup_{L \in \mathcal{L}_0(\bar{\varphi})} [L \bar{v}]_{\bar{\psi}; B_{1/2}} \right)
\end{equation*}
by \Cref{prop:EK_intermediate}. We first claim that
\begin{equation} \label{eq:Lv}
\sup_{L \in \mathcal{L}_0(\bar{\varphi})} [L \bar{v}]_{\bar{\psi}; B_{1/2}} \leq C [\bar{u}]_{\bar{\psi}; \Rn}.
\end{equation}
Since $\bar{v} = (1-\eta) \bar{u} = 0$ in $B_{3/4}$, we have for $\bar{x}, \bar{x}' \in B_{1/2}$,
\begin{align*}
\ve L \bar{v}(\bar{x}) - L \bar{v}(\bar{x}') \ve
&\leq \int_{\Rn \setminus B_{1/4}} \ve \delta(\bar{v}, \bar{x}, \bar{y}) - \delta(\bar{v}, \bar{x}', \bar{y}) \ve \frac{c_{\bar{\varphi}}}{\ve \bar{y} \ve^n \bar{\varphi}(\ve \bar{y} \ve)} d\bar{y} \\
&\leq 4[\bar{v}]_{\bar{\psi}; \Rn} \bar{\psi}(\ve \bar{x}-\bar{x}' \ve) \int_{\Rn \setminus B_{1/4}} \frac{c_{\bar{\varphi}}}{\ve \bar{y} \ve^n \bar{\varphi}(\ve \bar{y} \ve)} d\bar{y}.
\end{align*}
We use \Cref{lem:c_varphi} to obtain that 
\begin{equation*}
\int_{\Rn \setminus B_{1/4}} \frac{c_{\bar{\varphi}}}{\ve \bar{y} \ve^n \bar{\varphi}(\ve \bar{y} \ve)} d\bar{y} \leq C (2-\sigma_1) \int_{1/4}^\infty r^{-1-\sigma_1} dr \leq C(n, a, \sigma_0),
\end{equation*}
which gives $\ve L \bar{v}(\bar{x}) - L \bar{v}(\bar{x}') \ve \leq C [\bar{u}]_{\bar{\psi}; \Rn} \bar{\psi}(\ve \bar{x}-\bar{x}' \ve)$. Thus, the claim \eqref{eq:Lv} is proved, and hence
\begin{equation*}
[\bar{u}]_{\bar{\varphi} \bar{\psi}; B_{1/4}} \leq C \left( \Ve \bar{u} \Ve_{\bar{\varphi} + \alpha; B_1} + [\bar{u}]_{\bar{\psi}; \Rn} + [\bar{f}]_{\bar{\psi}; B_{1/2}} \right).
\end{equation*}
Scaling back, we obtain
\begin{equation*}
[u]'_{\varphi \psi; B_{\rho/4}(z)} \leq C \left( \Ve u \Ve'_{\varphi + \alpha; B_\rho(z)} + \psi(\rho) [u]_{\psi; \Rn} + \varphi(\rho) \frac{c_{\bar{\varphi}}}{c_\varphi} [f]'_{\psi; B_{\rho/2}(z)} \right).
\end{equation*}
We know that $\varphi(\rho) \frac{c_{\bar{\varphi}}}{c_\varphi} \leq C$ by \Cref{lem:c_varphi} (ii) and that $\psi(\rho) \leq C$. Thus, with the help of the interpolation inequality with a small constant $\varepsilon > 0$, we have
\begin{equation} \label{eq:adimnorm}
\Ve u \Ve'_{\varphi \psi; B_{\rho/4}(z)} \leq C \left( \varepsilon \Ve u \Ve'_{\varphi\psi; B_\rho(z)} + \Ve u \Ve_{\psi; \Rn} + \Ve f \Ve'_{\psi; B_{\rho/2}(z)} \right).
\end{equation}
Note that \eqref{eq:adimnorm} can be written, by using the interior norms, as
\begin{equation*}
\Ve u \Ve^\ast_{\varphi \psi; B_1} \leq C \left( \varepsilon \Ve u \Ve^\ast_{\varphi \psi; B_1} + \Ve u \Ve_{\psi; \Rn} + \Ve f \Ve^\ast_{\psi; B_1} \right).
\end{equation*}
By taking $\varepsilon$ sufficiently small so that $C \varepsilon \leq 1/2$, we arrive at
\begin{equation*}
\Ve u \Ve^\ast_{\varphi \psi; B_1} \leq C \left( \Ve u \Ve_{\psi; \Rn} + \Ve f \Ve^\ast_{\psi; B_1} \right),
\end{equation*}
which concludes the theorem.
\end{proof}

\section{Schauder type estimates} \label{section:Sch}

In this section we establish the Schauder type estimates for non-translation invariant fully nonlinear equations. Both \Cref{thm:Schauder} and \Cref{thm:Schauder_bdd_data} will follow from the following intermediate statement that shows the ``freezing coefficients" step.

\begin{proposition} \label{prop:Sch_intermediate}
Let $\bar{\alpha}$ be the constant in \Cref{thm:Evans-Krylov}, assume \eqref{a:index}, and let $\alpha \in (0, m_\psi)$ satisfy \eqref{eq:alpha}. Let $\mathcal{L}$ be either $\mathcal{L}_0(\varphi)$ or $\mathcal{L}_\psi(\varphi)$, and let $\I$ be a non-translation invariant operator which is elliptic with respect to $\mathcal{L}$. Suppose that 
\begin{equation} \label{eq:Holder}
\beta_{\I - \I_0}(x, x') \leq A_0 \psi(\ve x-x' \ve) \quad\text{for all}~ x, x' \in B_1(0),
\end{equation}
with $A_0 \leq 1$, and that $\I_0$ satisfies the Evans–Krylov type estimates for $B_r(z) = B_1(0)$. If $u \in C^{\varphi\psi} (\overline{B_1}) \cap C^{\varphi + \alpha}(\Rn)$ solves, for $f \in C^\psi(\overline{B_1})$ and $v \in C^{\varphi\psi}(\overline{B_1}) \cap L^\infty(\Rn)$,
\begin{equation*}
\I(u + v, x) = f(x) \quad\text{in}~ B_1,
\end{equation*}
then
\begin{equation*}
[u]_{\varphi\psi; B_{1/2}} \leq C \left( \Ve u \Ve_{\varphi+\alpha; \Rn} + A_0 \Ve u+v \Ve_{\varphi \psi; B_1} + \Ve u+v \Ve_{L^\infty(\Rn)} + [f]_{\psi; B_1} + \sup_{L \in \mathcal{L}} [Lv]_{\psi; B_1} \right),
\end{equation*}
where $C$ is a universal constant depending only on $n, \lambda, \Lambda, a, \sigma_0, \psi$, and $m_{\varphi \psi} - \lfloor m_{\varphi\psi} \rfloor$.
\end{proposition}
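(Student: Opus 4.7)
The plan is to carry out the classical ``freezing coefficients'' step: replace the non-translation invariant equation $\I(u+v, x)=f(x)$ by the translation invariant equation $\I_0(u+v, x)=g(x)$, where the inhomogeneity $g$ absorbs the discrepancy between $\I$ and $\I_0$, and then invoke the Evans--Krylov hypothesis for $\I_0$. The content of the argument is that \eqref{eq:Holder} makes $g-f$ lie in $C^\psi$ with the right quantitative bound, so that \eqref{eq:EK_estimate} is directly applicable.

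First, I would set $g(x) := \I_0(u+v, x)$ for $x \in B_1$, so that the original equation $\I(u+v,x)=f(x)$ rewrites as $g(x) = f(x) + (\I_0 - \I)(u+v, x)$, and $u$ becomes a solution of $\I_0(u+v)=g$ in $B_1$. To control $[g]_{\psi; B_1}$, I would unpack the definition of $\beta_{\I - \I_0}$: for any $x, x' \in B_1$,
\begin{equation*}
\ve (\I_0 - \I)(u+v, x) - (\I_0 - \I)(u+v, x') \ve \leq \beta_{\I - \I_0}(x, x') \left( \Ve u+v \Ve'_{\varphi\psi; B_1} + \Ve u+v \Ve_{L^\infty(\Rn)} \right).
\end{equation*}
Using the hypothesis \eqref{eq:Holder} and the equivalence between $\Ve \cdot \Ve'_{\varphi\psi; B_1}$ and $\Ve \cdot \Ve_{\varphi\psi; B_1}$ (up to a universal factor depending on $\mathrm{diam}\, B_1$), this yields
\begin{equation*}
[g]_{\psi; B_1} \leq [f]_{\psi; B_1} + C A_0 \left( \Ve u+v \Ve_{\varphi\psi; B_1} + \Ve u+v \Ve_{L^\infty(\Rn)} \right).
\end{equation*}

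Next, since $\I_0$ is translation invariant, elliptic with respect to $\mathcal{L}$ (ellipticity transfers from $\I$ because extremal operators in $\mathcal{L}_0(\varphi)$ and $\mathcal{L}_\psi(\varphi)$ are themselves translation invariant), and satisfies the Evans--Krylov type estimate \eqref{eq:EK_estimate} in $B_1$ by hypothesis, and since $u \in C^{\varphi+\alpha}(\Rn)$ solves $\I_0(u+v)=g$ in $B_1$, I would directly apply \eqref{eq:EK_estimate} with $r=1$ and $f$ replaced by $g$ to obtain
\begin{equation*}
[u]_{\varphi\psi; B_{1/2}} \leq C \left( \Ve u \Ve_{\varphi+\alpha; \Rn} + [g]_{\psi; B_1} + \sup_{L \in \mathcal{L}} [Lv]_{\psi; B_1} \right).
\end{equation*}
Substituting the preceding bound on $[g]_{\psi; B_1}$ and using $A_0 \leq 1$ to fold $A_0 \Ve u+v \Ve_{L^\infty(\Rn)}$ into $\Ve u+v \Ve_{L^\infty(\Rn)}$ would give the claimed inequality.

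The main subtlety, rather than a genuine obstacle, is conceptual: one must recognize that the normalization $\Ve \cdot \Ve'_{\varphi\psi; B_1} + \Ve \cdot \Ve_{L^\infty(\Rn)}$ built into the definition of $\beta_{\I - \I_0}$ is precisely the correct scale to turn \eqref{eq:Holder} into a $C^\psi$ bound on the new forcing $g$, and that the $A_0 \Ve u+v \Ve_{\varphi\psi; B_1}$ term it produces is what later allows the compactness/iteration arguments in \Cref{thm:Schauder} and \Cref{thm:Schauder_bdd_data} to absorb it (via small $A_0$ after rescaling). Once this normalization is set up correctly, the present proposition is a direct chain of inequalities from \eqref{eq:EK_estimate}.
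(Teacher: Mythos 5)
Your proof is correct and follows the paper's argument essentially verbatim: rewrite the equation as $\I_0(u+v)=g$ with $g=f-(\I-\I_0)(u+v,\cdot)$, bound $[g]_{\psi;B_1}$ via the definition of $\beta_{\I-\I_0}$ together with \eqref{eq:Holder}, and apply the hypothesized Evans--Krylov estimate \eqref{eq:EK_estimate} for $\I_0$ in $B_1$. The only cosmetic difference is that you name the new right-hand side $g$ explicitly, whereas the paper keeps it inline; the parenthetical remark about ellipticity of $\I_0$ is harmless but unneeded since the Evans--Krylov property of $\I_0$ is directly assumed.
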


If $\I$ is concave and elliptic with respect to $\mathcal{L}_0(\varphi)$, then the assumption that $\I_0$ satisfies the Evans–Krylov type estimates is fulfilled by \Cref{prop:EK_intermediate}. Note that this is also true if $\I$ is concave and elliptic with respect to $\mathcal{L}_\psi(\varphi)$ since \Cref{prop:EK_intermediate} holds true with $\sup_{L \in \mathcal{L}_0(\varphi)} [Lv]_{\psi; B_1}$ replaced by $\sup_{L \in \mathcal{L}_\psi(\varphi)} [Lv]_{\psi; B_1}$. It is easily checked by observing that the inequality \eqref{eq:sup_Lv} is the only part where $\sup_{L \in \mathcal{L}_0(\varphi)} [Lv]_{\psi; B_1}$ is used throughout the proof. Thus, concave non-translation invariant operators (elliptic with respect to $\mathcal{L}_0(\varphi)$ or $\mathcal{L}_\psi(\varphi)$) are examples of operators for \Cref{prop:Sch_intermediate}, and hence for \Cref{thm:Schauder} or \Cref{thm:Schauder_bdd_data}, respectively.

\begin{proof}
We write the equation $\I(u+v, x) = f(x)$ by
\begin{equation*}
\I_0 (u + v)(x) = f(x) - (\I(u+v, x) - \I_0(u+v)(x)) \quad\text{in}~ B_1,
\end{equation*}
Since $\I_0$ satisfies the Evans–Krylov type estimates in $B_1$, we have
\begin{equation} \label{eq:freezing}
[u]_{\varphi \psi; B_{1/2}} \leq C \left( \Ve u \Ve_{\varphi+\alpha; \Rn} + [f]_{\psi; B_1} + [\I(u+v, \cdot) - \I_0(u+v)]_{\psi; B_1} + \sup_{L \in \mathcal{L}(\varphi)} [Lv]_{\psi; B_1} \right).
\end{equation}
Thus, it only remains to estimate $[ \I (u+v, \cdot) - \I_0 (u+v) ]_{\psi; B_1}$. But, the assumption \eqref{eq:Holder} shows that
\begin{align} \label{eq:I-I_0}
\begin{split}
[\I(u+v, \cdot) - \I_0 (u+v)]_{\psi; B_1} 
&\leq \sup_{x, x' \in B_1} \frac{\beta_{\I-\I_0}(x, x')}{\psi(\ve x-x' \ve)} \left( \Ve u+v \Ve'_{\varphi \psi; B_1} + \Ve u+v \Ve_{L^\infty(\Rn)} \right) \\
&\leq A_0 \left( \Ve u+v \Ve_{\varphi \psi; B_1} + \Ve u+v \Ve_{L^\infty(\Rn)} \right).
\end{split}
\end{align}
Therefore, the result follows from \eqref{eq:freezing} and \eqref{eq:I-I_0} since $A_0 \leq 1$.
\end{proof}

We point out that the proofs of \Cref{thm:Schauder} and \Cref{thm:Schauder_bdd_data} differ only in the control of quantity $\sup_{L \in \mathcal{L}} [Lv]_{\psi; B_1}$ in \Cref{prop:Sch_intermediate}. It can be controlled by using the $C^\psi$ H\"older regularity of solutions $u$ in \Cref{thm:Schauder}. However, if both solutions and kernels are not regular enough, then this quantity cannot be controlled (see \Cref{section:counterexample}). This is why we require some regularity of kernels when solutions are merely bounded. Let us first prove \Cref{thm:Schauder}.

\begin{proof} [Proof of \Cref{thm:Schauder}]
Similarly as in the proof of \Cref{thm:Evans-Krylov}, let $B_{\rho/2} (z) \subset B_\rho(z) \subset B_1$ and consider the rescaled equation
\begin{equation} \label{eq:rescaled_eq_1/2}
\bar{\I}(\bar{u}, \bar{x}) = \bar{f}(\bar{x}) \quad\text{in}~ B_{1/2},
\end{equation}
where $\bar{u}$, $\bar{\I}$, and $\bar{f}$ are given as \Cref{prop:rescaled_equation}, with $\bar{\varphi}(r) := \varphi(\rho r) / \varphi(\rho)$ and $\bar{\psi}(r) := \psi(\rho r) / \psi(\rho)$. Then the assumption \eqref{a:Holder} reads as
\begin{equation*}
\beta_{\bar{\I} - \bar{\I}_0} (\bar{x}, \bar{x}') \leq A_0 \psi(\rho) \bar{\psi}(\ve \bar{x} - \bar{x}' \ve) \quad\text{for all}~ \bar{x}, \bar{x}' \in B_{1/2}.
\end{equation*}
Indeed, for $\bar{x}, \bar{x}' \in B_{1/2}$ and $\bar{w} \in C^{\bar{\varphi} \bar{\psi}} (\overline{B_{1/2}}) \cap L^\infty(\Rn)$, let $x = z + \rho \bar{x}$, $x' = z + \rho \bar{x}'$, and $w(x) = \bar{w}((x-z)/\rho)$. Then by \Cref{lem:rescaled_adimnorm} and \Cref{lem:c_varphi} (ii), we have
\begin{align*}
\beta_{\bar{\I} - \bar{\I}_0}(\bar{x}, \bar{x}')
&= \sup_{\bar{w}} \frac{\ve \bar{\I}(\bar{w}, \bar{x}) - \bar{\I}_0 \bar{w}(\bar{x}) - (\bar{\I}(\bar{w}, \bar{x}') - \bar{\I}_0 \bar{w}(\bar{x}')) \ve}{\Ve \bar{w} \Ve'_{\bar{\varphi} \bar{\psi}; B_{1/2}} + \Ve \bar{w} \Ve_{L^\infty(\Rn)}} \\
&= \varphi(\rho) \frac{c_{\bar{\varphi}}}{c_\varphi} \sup_w \frac{\ve \I (w, x) - \I_z w(x) - (\I (w, x') - \I_z w(x')) \ve}{\Ve w \Ve'_{\varphi \psi; B_{\rho/2}(z)} + \Ve w \Ve_{L^\infty(\Rn)}} \\
&\leq \beta_{\I - \I_z}(x, x') \leq A_0 \psi(\ve x-x' \ve) = A_0 \psi(\rho) \bar{\psi}(\ve \bar{x}-\bar{x}' \ve).
\end{align*}
Furthermore, $\bar{\I}_0$ has Evans–Krylov type estimate in $B_{1/2}$ since $\I_z$ has it in $B_{\rho/2}(z)$. To apply \Cref{prop:Sch_intermediate}, we make $\bar{A}_0 := A_0 \psi(\rho) \leq \varepsilon_0 \leq 1$ by taking $\rho = \rho(A_0, \psi) > 0$ sufficiently small. The universal constant $\varepsilon_0$ will be chosen later. Let $\eta$ be a cut-off function supported in $B_1$ satisfying $\eta \equiv 1$ on $B_{3/4}$ and write the equation \eqref{eq:rescaled_eq_1/2} as $\bar{\I}(\eta \bar{u} + \bar{v}, \bar{x}) = \bar{f}(\bar{x})$ with $\bar{v} = (1-\eta)\bar{u}$. Since $u \in C^{\varphi\psi}(B_1) \cap C^\psi(\Rn)$, we have $\bar{u} \in C^{\bar{\varphi}\bar{\psi}}(\overline{B_1}) \cap C^{\bar{\psi}}(\Rn)$, and hence $\eta \bar{u} \in C^{\bar{\varphi}\bar{\psi}}(\overline{B_{1/2}}) \cap C^{\bar{\varphi}+\alpha}(\Rn)$ and $\bar{v} \in C^{\bar{\varphi}\bar{\psi}}(\overline{B_{1/2}}) \cap C^{\bar{\psi}}(\Rn)$. Thus, we obtain by the rescaled version of \Cref{prop:Sch_intermediate},
\begin{equation*}
[\eta \bar{u}]_{\bar{\varphi} \bar{\psi}; B_{1/4}} \leq C \left( \Ve \eta \bar{u} \Ve_{\bar{\varphi}+\alpha; \Rn} + \bar{A}_0 \Ve \bar{u} \Ve_{\bar{\varphi} \bar{\psi}; B_{1/2}} + \Ve \bar{u} \Ve_{L^\infty(\Rn)} + [\bar{f}]_{\bar{\psi}; B_{1/2}} + \sup_{L \in \mathcal{L}_0(\bar{\varphi})} [L\bar{v}]_{\bar{\psi}; B_{1/2}} \right).
\end{equation*}
Notice that it follows from the computation \eqref{eq:Lv} that
\begin{equation*}
[\bar{u}]_{\bar{\varphi} \bar{\psi}; B_{1/4}} \leq C \left( \Ve \bar{u} \Ve_{\bar{\varphi}+\alpha; B_1} + \varepsilon_0 \Ve \bar{u} \Ve_{\bar{\varphi} \bar{\psi}; B_{1/2}} + \Ve \bar{u} \Ve_{\bar{\psi}; \Rn} + [\bar{f}]_{\bar{\psi}; B_{1/2}} \right).
\end{equation*}
By scaling back and then using \Cref{lem:c_varphi} (ii) and $\psi(\rho) \leq C$, we have
\begin{equation*}
[u]'_{\varphi \psi; B_{\rho/4}(z)} \leq C \left( \Ve u \Ve'_{\varphi+\alpha; B_\rho(z)} + \varepsilon_0 \Ve u \Ve'_{\varphi\psi; B_{\rho/2}(z)} + \Ve u \Ve_{\psi; \Rn} + [f]'_{\psi; B_{\rho/2}(z)} \right).
\end{equation*}
Using the interpolation inequalities, we obtain that
\begin{equation*}
\Ve u \Ve'_{\varphi \psi; B_{\rho/4}(z)} \leq C \left( 2 \varepsilon_0 \Ve u \Ve'_{\varphi\psi; B_{\rho/2}(z)} + \Ve u \Ve_{\psi; \Rn} + \Ve f \Ve'_{\psi; B_{\rho/2}(z)} \right),
\end{equation*}
or, in terms of the interior norms, that
\begin{equation*}
\Ve u \Ve^\ast_{\varphi \psi; B_1} \leq C \left( 2 \varepsilon_0 \Ve u \Ve^\ast_{\varphi\psi; B_1} + \Ve u \Ve_{\psi; \Rn} + \Ve f \Ve^\ast_{\psi; B_1} \right).
\end{equation*}
By taking $\varepsilon_0$ sufficiently small so that $2C\varepsilon_0 \leq 1/2$, we arrive at the desired estimates.
\end{proof}

We finally prove the last theorem. Instead of using the $C^\psi$ H\"older regularity of solutions, we use the regularity of kernels \eqref{a:L_psi} to estimate the quantity $\sup_{L \in \mathcal{L}_\psi(\varphi)} [Lv]_{\psi; B_1}$.

\begin{proof} [Proof of \Cref{thm:Schauder_bdd_data}]
By the same argument as in \Cref{thm:Schauder}, we have the rescaled equation $\bar{\I}(\eta \bar{u} + \bar{v}, \bar{x}) = \bar{f}(\bar{x})$ in $B_{1/2}$. In this case, we have $\eta \bar{u} \in C^{\bar{\varphi} \bar{\psi}}(\overline{B_{1/2}}) \cap C^{\bar{\varphi}+\alpha}(\Rn)$ and $\bar{v} \in C^{\bar{\varphi} \bar{\psi}}(\overline{B_{1/2}}) \cap L^\infty(\Rn)$. Thus, we can still apply the rescaled version of \Cref{prop:Sch_intermediate} to obtain
\begin{equation*}
[\eta \bar{u}]_{\bar{\varphi} \bar{\psi}; B_{1/4}} \leq C \left( \Ve \eta \bar{u} \Ve_{\bar{\varphi}+\alpha; \Rn} + \bar{A}_0 \Ve \bar{u} \Ve_{\bar{\varphi} \bar{\psi}; B_{1/2}} + \Ve \bar{u} \Ve_{L^\infty(\Rn)} + [\bar{f}]_{\bar{\psi}; B_{1/2}} + \sup_{L \in \mathcal{L}_\psi(\varphi)} [L \bar{v}]_{\bar{\psi}; B_{1/2}} \right).
\end{equation*}
Let us estimate $\sup_{L \in \mathcal{L}_\psi(\varphi)} [L\bar{v}]_{\bar{\psi}; B_{1/2}}$. Since $\bar{v} \equiv 0$ in $B_{3/4}$, we see that for $\bar{x} \in B_{1/2}$ and $\bar{h} \in B_{1/16}$,
\begin{align*}
\ve L\bar{v}(\bar{x}+ \bar{h}) - L\bar{v}(\bar{x}) \ve
&= \lv 2\int_{\Rn} \bar{v}(\bar{x}+\bar{y}) (K(\bar{y} - \bar{h}) - K(\bar{y})) d\bar{y} \rv \\
&\leq 2\psi(\ve \bar{h} \ve) \int_{\Rn \setminus B_{1/8}} \ve \bar{v} (\bar{x} + \bar{y}) \ve \frac{\ve K(\bar{y} - \bar{h}) - K(\bar{y}) \ve}{\psi(\ve \bar{h} \ve)} d\bar{y} \\
&\leq 2\Ve \bar{v} \Ve_{L^\infty(\Rn)} \psi(\ve \bar{h} \ve) \int_{\Rn \setminus B_{1/8}} \frac{\ve K(\bar{y} - \bar{h}) - K(\bar{y}) \ve}{\psi(\ve \bar{h} \ve)} d\bar{y}.
\end{align*}
The assumption \eqref{a:L_psi} implies that 
\begin{equation*}
\int_{\Rn \setminus B_{\rho_0}} \frac{\ve K(y) - K(y-h) \ve}{\psi(\ve h \ve)} dy \leq C
\end{equation*}
whenever $\ve h \ve \leq \rho_0/2$. Therefore, we obtain $\ve L\bar{v}(\bar{x}+ \bar{h}) - L\bar{v}(\bar{x}) \ve \leq C \Ve \bar{u} \Ve_{L^\infty(\Rn)} \psi(\ve \bar{h} \ve)$, and hence,
\begin{equation*}
[\bar{u}]_{\bar{\varphi}\bar{\psi}; B_{1/4}} \leq C \left( \Ve \bar{u} \Ve_{\bar{\varphi}+\alpha; B_1} + \varepsilon_0 \Ve \bar{u} \Ve_{\bar{\varphi} \bar{\psi}; B_{1/2}} + \Ve \bar{u} \Ve_{L^\infty(\Rn)} + [\bar{f}]_{\bar{\psi}; B_{1/2}} \right).
\end{equation*}
As in the proof of \Cref{thm:Schauder}, the standard covering argument finishes the proof.
\end{proof}

\section{Counterexamples to \texorpdfstring{$C^{\varphi\psi}$}{} regularity for merely bounded solutions} \label{section:counterexample}

In this section we observe that the $C^{\psi}(\Rn)$ assumption on $u$ in \Cref{thm:Evans-Krylov} and \Cref{thm:Schauder} can not be relaxed to $L^\infty(\Rn)$. This can be seen by finding a sequence $\lbrace u_m \rbrace$ of solutions to equations with rough kernels of variable orders in $B_1$ that satisfy $\Ve u_m \Ve_{L^\infty(\Rn)} \leq C$ and $\Ve u_m \Ve_{C^{\varphi\psi}(B_{1/2})} \to + \infty$ as $m \to +\infty$, under the assumption \eqref{a:index}. This sequence is given in \cite[Section 5]{Ser15a} for the case $\varphi = r^\sigma$ and $\psi = r^\alpha$. The construction of the sequence in a general framework is almost the same, but let us show how to construct the sequence to make the paper self-contained.

Let us find a sequence in dimension $n=1$, but this will give the sequence in every dimension by considering rotationally symmetric functions. For every $m \geq 1$, we consider the solution $u_m$ to
\begin{equation*}
\begin{cases}
\M^+_{\mathcal{L}_0(\varphi)} u_m = 0 &\text{in}~ (-1, 1), \\
u_m = 0 &\text{in}~ [-2,-1] \cup [1,2], \\
u_m = \mathrm{sign} \sin (m\pi x) &\text{in}~ (-\infty, -2] \cup [2, \infty).
\end{cases}
\end{equation*}
For $p > 0$ small enough, the function $\psi(x) = \mathrm{dist}(x, [-1/4, 1/4])^p$ satisfies $\M^+_{\mathcal{L}_0(\varphi)} \psi \leq 0$ in $(-1/4-\varepsilon, - 1/4) \cup (1/4, 1/4+\varepsilon)$ for some $\varepsilon > 0$. We use the translation of $\psi$ as a barrier and use the comparison principle \cite[Theorem 3.9]{KL20} to obtain $\ve u_m \ve \leq C \mathrm{dist}(x, (-\infty, -1] \cup [1, \infty))^p$ in $(-1, 1)$. Combining this with the interior H\"older estimates \cite[Theorem 1.2]{KL20}, we have
\begin{equation} \label{eq:sequence_Holder}
\Ve u_m \Ve_{C^\alpha([-2, 2])} = \Ve u_m \Ve_{C^\alpha([-1, 1])} \leq C
\end{equation}
for some $\alpha > 0$ small enough and $C > 0$ independent of $m$.

We next assume that
\begin{equation} \label{eq:contradiction}
\Ve u_m \Ve_{C^{\varphi \psi}((-1/2, 1/2))} \leq C,
\end{equation}
and find a contradiction by comparing two values $\M^+_{\mathcal{L}_0(\varphi)} u_m(0)$ and $\M^+_{\mathcal{L}_0(\varphi)} u_m(1/(2m))$, which are 0 by definition. Note that the maximal operator $\M^+_{\mathcal{L}_0(\varphi)}$ can be written by
\begin{align*}
\M^+_{\mathcal{L}_0(\varphi)} u(x) 
&= \int_\mathbb{R} \frac{\Lambda \delta_+ (u, x, y) - \lambda \delta_- (u, x, y)}{\ve y \ve \varphi(\ve y \ve)} dy \\
&= \int_\mathbb{R} \delta(u, x, y) \frac{\Lambda (\mathrm{sign}(\delta(u, x, y))_+ + \lambda (\mathrm{sign}(\delta(u, x, y)))_-}{\ve y \ve \varphi(\ve y \ve)} dy,
\end{align*}
where we omit the constant $c_\varphi$ in this section. Since for all $L \in \mathcal{L}_0(\varphi)$ the solution to $Lw = 0$ in $(-1, 1)$ with the same boundary data as $u_m$ satisfies $w(0) = 0$ and $\M^+_{\mathcal{L}_0(\varphi)} w \geq Lw = 0$, by the comparison principle we have $u_m(0) \geq 0$. This gives $\delta(u_m, 0, y) = -2u_m(0) \leq 0$ for $\ve y \ve > 2$. Thus, we have
\begin{equation*}
\M^+_{\mathcal{L}_0(\varphi)} u_m(0) = \int_{\mathbb{R}} \delta(u_m, 0, y) \frac{b_m(y)}{\ve y \ve \varphi(\ve y \ve)} dy,
\end{equation*}
where $b_m(y) = \Lambda (\mathrm{sign}(\delta(u_m, 0, y)))_+ + \lambda (\mathrm{sign}(\delta(u_m, 0, y)))_-$, and $b_m(y) \equiv \lambda$ for $\ve y \ve > 2$.

On the other hand, we have
\begin{equation*}
\delta\left( u_m, \frac{1}{2m}, y \right) = 2\mathrm{sign} \cos(m \pi y) - 2u_m\left( \frac{1}{2m} \right) \quad\text{for} ~ \ve y \ve > 2+\frac{1}{2m}.
\end{equation*}
Hence, we obtain
\begin{equation*}
\M^+_{\mathcal{L}_0(\varphi)} u_m \left( \frac{1}{2m} \right) = \int_{\mathbb{R}} \delta\left(u_m, \frac{1}{2m}, y\right) \frac{\tilde{b}_m(y)}{\ve y \ve \varphi(\ve y \ve)} dy,
\end{equation*}
where
\begin{equation*}
\tilde{b}_m(y) = \Lambda \left( \mathrm{sign} \left( \delta \left( u_m, \frac{1}{2m}, y \right) \right) \right)_+ + \lambda \left( \mathrm{sign} \left( \delta \left( u_m, \frac{1}{2m}, y \right) \right) \right)_-,
\end{equation*}
and
\begin{equation*}
\tilde{b}_m(y) = \lambda + \frac{\Lambda-\lambda}{2} (1+\mathrm{sign} \cos(m\pi y)) \quad\text{for}~ \ve y \ve > 2+\frac{1}{2m}.
\end{equation*}

We know from $\ve u_m \ve \leq 1$ in $\mathbb{R}$, $\ve \lbrace u_m < 0 \rbrace \cap (-5, 5) \ve \geq 1$, and $\M^+_{\mathcal{L}_0(\varphi)} u_m = 0$ in $(-1, 1)$, that $u_m \leq 1-\tau$ in $[-1/2, 1/2]$ for some $\tau > 0$ independent of $m$. Thus, using $u_m(0) \in [0, 1-\tau]$, we have for all $\gamma \in (0,1)$ and for all $m$,
\begin{equation*}
\int_{\ve y \ve > 2+\gamma} 2(\mathrm{sign} \cos(m \pi y) - u_m(0)) \frac{\frac{\Lambda-\lambda}{2} (1+ \mathrm{sign} \cos (m\pi y))}{\ve y \ve \varphi(\ve y \ve)} dy \geq 2c_1 > 0,
\end{equation*}
where $c_1$ is independent of $m$. For $m$ large enough so that
\begin{equation*}
\lv \int_{\ve y \ve > 2+\gamma} 2\mathrm{sign} \cos(m \pi y) \frac{\lambda}{\ve y \ve \varphi(\ve y \ve)} dy \rv \leq c_1, 
\end{equation*}
we obtain
\begin{equation*}
\int_{\ve y \ve > 2+\gamma} 2(\mathrm{sign} \cos(m \pi y) - u_m(0)) \frac{\tilde{b}_m(y)}{\ve y \ve \varphi(\ve y \ve)} dy \geq c_1 - \int_{\ve y \ve > 2+\gamma}u_m(0) \frac{\lambda}{\ve y \ve \varphi(\ve y\ve} dy.
\end{equation*}
Therefore, by using \eqref{eq:sequence_Holder}, we see that
\begin{align} \label{eq:B_2^c}
\begin{split}
&\int_{\ve y \ve > 2+\gamma} \delta\left( u_m, \frac{1}{2m}, y \right) \frac{\tilde{b}_m(y)}{\ve y \ve \varphi(\ve y \ve)} dy - \int_{\ve y \ve > 2+\gamma} \delta\left( u_m, 0, y \right) \frac{b_m(y)}{\ve y \ve \varphi(\ve y \ve)} dy \\
&\geq c_1 - 2 \lv u_m \left( \frac{1}{2m} \right) - u_m(0) \rv \int_{\ve y \ve > 2+\gamma} \frac{\tilde{b}_m(y)}{\ve y \ve \varphi(\ve y \ve)} dy \geq c_1 - C \lv \frac{1}{2m} - 0 \rv^\alpha.
\end{split}
\end{align}

We next prove that
\begin{equation} \label{eq:B_2}
\lv \int_{\ve y \ve < 2-\gamma} \delta\left( u_m, \frac{1}{2m}, y \right) \frac{\tilde{b}_m(y)}{\ve y \ve \varphi(\ve y \ve)} dy - \int_{\ve y \ve < 2-\gamma} \delta\left( u_m, 0, y \right) \frac{b_m(y)}{\ve y \ve \varphi(\ve y \ve)} dy \rv \leq C \left( \frac{1}{m} \right)^{\alpha'}
\end{equation}
for some $\alpha' \in (0, \alpha)$. By \eqref{eq:contradiction} and \eqref{eq:sequence_Holder}, we have for $\ve y \ve < 2-\gamma$,
\begin{equation*}
\lv \delta(u_m, 0, y) - \delta\left(u_m, \frac{1}{2m}, y\right) \rv \leq C \left( \varphi(\ve y \ve) \psi(\ve y \ve) \land \lv \frac{1}{2m} \rv^\alpha \right).
\end{equation*}
By the definition of $m_\psi$, we have $\psi(\ve y \ve) \leq C \ve y \ve^{m_\psi}$. Thus, for $\theta \in (0,1)$ and $\alpha' = (1-\theta)\alpha$, we obtain
\begin{equation} \label{eq:difference}
\lv \delta(u_m, 0, y) - \delta\left(u_m, \frac{1}{2m}, y\right) \rv \leq C_1 \varphi^\theta(\ve y \ve) \ve y \ve^{\theta m_\psi} \lv \frac{1}{2m} \rv^{\alpha'}.
\end{equation}
If we take $\theta$ sufficiently close to 1 so that $\theta m_\psi > (1-\theta)M_\varphi$, then we have
\begin{equation} \label{eq:integral}
\int_{\ve y \ve < 2-\gamma} \frac{\varphi^\theta(\ve y \ve) \ve y \ve^{\theta m_\psi}}{\ve y \ve^n \varphi(\ve y \ve)} dy \leq \int_{\ve y \ve < 2-\gamma} \frac{a^{1-\theta}}{\varphi^{1-\theta} (2)} \lv \frac{2}{y} \rv^{(1-\theta)\sigma_2} \ve y \ve^{-n+\theta m_\psi} dy \leq C.
\end{equation}
If $\delta(u_m, 0, y)$ or $\delta(u_m, 1/(2m), y)$ is greater than $2C_1 \varphi^\theta(\ve y \ve) \ve y \ve^{\theta m_\psi} \lv \frac{1}{2m} \rv^{\alpha'}$, then $b = \tilde{b} = \Lambda$. Similarly, if $\delta(u_m, 0, y)$ or $\delta(u_m, 1/(2m), y)$ is less than $-2C_1 \varphi^\theta(\ve y \ve) \ve y \ve^{\theta m_\psi} \lv \frac{1}{2m} \rv^{\alpha'}$, then $b = \tilde{b} = \lambda$. In these cases, we use \eqref{eq:difference} and \eqref{eq:integral} to compute the left hand side of \eqref{eq:B_2}. If both $\ve \delta(u_m, 0, y) \ve$ and $\ve \delta(u_m, 1/(2m), y) \ve$ are less than $2C_1 \varphi^\theta(\ve y \ve) \ve y \ve^{\theta m_\psi} \lv \frac{1}{2m} \rv^{\alpha'}$, then \eqref{eq:integral} is enough to conclude \eqref{eq:B_2}.

Since $\M^+_{\mathcal{L}_0(\varphi)} u_m (0) = 0$ and $\M^+_{\mathcal{L}_0(\varphi)} u_m (1/(2m)) = 0$, by \eqref{eq:B_2^c} and \eqref{eq:B_2}, we arrive at
\begin{equation*}
0 = \M^+_{\mathcal{L}_0(\varphi)} u_m \left(\frac{1}{2m} \right) - \M^+_{\mathcal{L}_0(\varphi)} u_m (0) \geq c_1 - C \lv \frac{1}{2m} \rv^\alpha - C \lv \frac{1}{2m} \rv^{\alpha'} - C\gamma.
\end{equation*}
By taking $\gamma < c_1 / C$ and taking limit $m \to \infty$, we get a contradiction.

\begin{appendix}

\section{Local boundedness} \label{section:appendix}

We provide a weak version of the mean value theorem, which gives a half Harnack inequality. The full Harnack inequality is proved by the authors \cite{KL20} and its proof combines the following theorem with the weak Harnack inequality which is the other half of the Harnack inequality.

\begin{theorem} \label{thm:local_boundedness}
Assume $\sigma_1 \geq \sigma_0 > 0$. Let $u$ be a function such that $u$ is continuous in $\overline{B_1}$ and $\Ve u \Ve_{L^1(\Rn, \omega)} \leq C_0$. If $\M^+_{\mathcal{L}_0(\varphi)} u \geq -C_0$ in $B_1$, then $u \leq CC_0$ in $B_{1/2}$ for some universal constant $C$, depending only on $n$, $\lambda$, $\Lambda$, $a$, and $\sigma_0$.
\end{theorem}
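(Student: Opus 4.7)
The plan is to prove \Cref{thm:local_boundedness} via a Caffarelli--Silvestre style contradiction, combined with the rescaling tools of \Cref{section:preliminaries} to handle the variable-order setting. After normalizing so that $C_0 = 1$, I would suppose for contradiction that $M := \sup_{B_{1/2}} u$ is arbitrarily large and work towards a contradiction with $\Ve u \Ve_{L^1(\Rn, \omega)} \leq 1$. The first step is to localize the supremum: comparing $u$ with a penalty barrier of the form $\beta(x) = A(1-\ve x \ve^2)^{-p}$ for a small exponent $p > 0$ and a tunable constant $A$, I would select a contact point $x_0 \in B_{3/4}$ at which $u(x_0) \geq cM$ and the barrier produces a quantitative lower bound $u(x_0) - u(x) \geq - C \ve x - x_0 \ve^2$ for $x$ in a small ball $B_r(x_0)$.

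The core step is a measure density estimate at $x_0$. From $\M^+_{\mathcal{L}_0(\varphi)} u(x_0) \geq -1$, the local quadratic control above, and the global bound $\Ve u \Ve_{L^1(\Rn, \omega)} \leq 1$ (which controls the tail $\int_{\Rn \setminus B_r(x_0)} (u-u(x_0))_+ \frac{c_\varphi}{\ve y \ve^n \varphi(\ve y \ve)} dy$ once $u(x_0)$ is large), I would deduce that
\begin{equation*}
\ve \lbrace y \in B_r(x_0) : u(y) \geq u(x_0)/2 \rbrace \ve \geq \mu \ve B_r \ve
\end{equation*}
for universal $r, \mu > 0$. Applying \Cref{prop:rescaled_equation} to translate the inequality to a ball around any point of this density set, the rescaled operator is elliptic with respect to $\mathcal{L}_0(\bar\varphi)$, where $\bar\varphi$ still satisfies \eqref{a:WS} with the same $\sigma_1, \sigma_2, a$. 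Iterating this step produces a nested sequence of sets whose measures grow geometrically, on which $u$ remains at least $M/2^k$; a standard packing argument then gives a set of positive measure on which $u \geq cM$, and using this in the weighted $L^1$ bound forces $M$ to be universally bounded.

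The main obstacle will be guaranteeing that the density constant $\mu$, the radius $r$, and all constants appearing in the iteration remain universal across the variable-order class. This is precisely where \Cref{lem:c_varphi} is essential: part (i) keeps $c_\varphi$ comparable to $2-\sigma_0$ up to the universal factor $a$, and part (ii) controls the rescaling constant $\varphi(\rho) c_{\bar\varphi}/c_\varphi \leq 1 + a^2$, so the rescaled equation retains the same form with uniform constants. A related technical point is splitting the nonlocal operator at $x_0$ into a near-field piece controlled by the barrier and a far-field piece controlled by the weighted integrability $\Ve u \Ve_{L^1(\Rn, \omega)}$; the weight $\omega(y) = c_\varphi / (1 + \ve y \ve^n \varphi(\ve y \ve))$ is precisely chosen to match the kernel density, so the tail integrals combine cleanly and the argument survives the passage to the rescaled scale function $\bar\varphi$.
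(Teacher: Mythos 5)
Your overall scaffolding (normalize, introduce a penalty barrier, pick a contact point $x_0$, localize, get a measure estimate, conclude) is the right one — it is the Caffarelli--Silvestre ``half Harnack'' template and essentially what the paper does. But as written the plan has two genuine gaps that would need to be filled, and both are precisely where the paper's proof makes nontrivial use of tools you do not invoke.

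First, the engine of the paper's proof is the weak Harnack inequality / $L^\varepsilon$-measure decay, which is cited as \cite[Theorem 4.7]{KL20} and is applied in $B_{\theta r/2}(x_0)$ to $w = \bigl( (1-\theta/2)^{-\gamma} u(x_0) - u \bigr)_+$ after computing $\M^-_{\mathcal{L}_0(\varphi)} w$. Your plan instead tries to extract a density estimate $\ve \lbrace u \geq u(x_0)/2 \rbrace \cap B_r(x_0) \ve \geq \mu \ve B_r \ve$ with universal $r,\mu$ directly from the single pointwise inequality $\M^+_{\mathcal{L}_0(\varphi)} u(x_0) \geq -1$ plus the barrier bound on $\delta_+$ and the $L^1(\omega)$ tail control. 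That step does not produce universal constants in the variable-order setting: the Hessian of the barrier scales like $u(x_0)/d^2$ (not like a fixed constant), so the near-field bound on $\int_{B_r} \Lambda\delta_+\, c_\varphi\,\ve y \ve^{-n}\varphi(\ve y\ve)^{-1}\,dy$ carries a factor of order $u(x_0)$, and once you try to convert the resulting bound on $\int_{E'} c_\varphi\,\ve y\ve^{-n}\varphi(\ve y\ve)^{-1}\,dy$ into a Lebesgue-measure bound on $E' \subset B_{\theta r}(x_0)$ you pick up the ratio $\varphi(\theta r)/(c_\varphi\varphi(r))$, which can degenerate as $\sigma_2 \to 2$. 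The paper avoids this entirely by reducing everything to the already-established weak Harnack, whose constants are uniform in $\sigma_0$. Your ``iterate and pack'' sketch would amount to re-deriving exactly that weak Harnack, which is a substantial separate result; the clean way is to invoke it as the paper does.

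Second, your choice of a penalty barrier $(1-\ve x\ve^2)^{-p}$ with a \emph{small} exponent $p$ is in the wrong regime. The $L^1(\Rn,\omega)$ bound at a contact point $x_0$ with $d = 1-\ve x_0\ve$ gives $\ve \lbrace u > u(x_0)/2 \rbrace \cap B_1\ve \leq C u(x_0)^{-1} = C t^{-1} d^{\gamma}$, and you need this to be dominated by a universal multiple of $t^{-1}\ve B_{d/2}(x_0)\ve \sim t^{-1} d^n$; that forces $\gamma \geq n$, and the weak Harnack application with right-hand side $\sim (\theta r)^{-(n+2)}$ forces $\gamma = (n+2)/\varepsilon$, a \emph{large} exponent. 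A small exponent would make the $L^1$-bound measure estimate useless near the boundary of $B_{3/4}$. Once you replace your density-estimate step by an application of \cite[Theorem 4.7]{KL20} applied to the truncated function $w$, choose the barrier exponent $\gamma = (n+2)/\varepsilon$, and combine the two opposing measure estimates at the single scale $B_{\theta r/4}(x_0)$, you land on the paper's proof and no further iteration or packing argument is needed.
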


The proof of \Cref{thm:local_boundedness} is contained in the proof of \cite[Theorem 1.1]{KL20} but let us include it here to make the paper self-contained.

\begin{proof}
We may assume that $C_0 = 1$ by considering $u/C_0$ instead of $u$. Let $\varepsilon > 0$ be the constant in \cite[Theorem 4.7]{KL20} and let $\gamma = (n+2)/\varepsilon$. Let us consider the minimal value of $t > 0$ such that 
\begin{align*}
u(x) \leq h_t(x) := t (1-\ve x \ve)^{-\gamma} \quad \text{for all} ~ x \in B_{3/4}.
\end{align*}
Then there exists $x_0 \in B_{3/4}$ satisfying $u(x_0) = h_t(x_0)$. It is enough to show that $t$ is uniformly bounded.

Let $d = 1 - \ve x_0 \ve, r = d/2$, and let $A = \lb u > u(x_0) / 2 \rb$. The assumption $\Ve u \Ve_{L^1(\Rn, \omega)} \leq 1$ implies that $\Ve u \Ve_{L^1(B_1)} \leq C(n, \sigma_0)$, and hence
\begin{equation*}
\ve A \cap B_1 \ve \leq C \lv \frac{2}{u(x_0)} \rv \leq C t^{-1} d^\gamma \leq C t^{-1} d^n.
\end{equation*}
Since $B_r(x_0) \subset B_1$ and $r = d/2$, we obtain
\begin{align*}
\lv \lbrace u > u(x_0) / 2 \rbrace \cap B_r(x_0) \rv \leq C t^{-1} \ve B_r(x_0) \ve.
\end{align*}
Let us show that there exists a universal constant $\theta > 0$ such that
\begin{equation*}
\ve \lbrace u < u(x_0) / 2 \rbrace \cap B_{\theta r/4}(x_0) \ve \leq \frac{1}{2} \ve B_{\theta r/4} \ve
\end{equation*}
for $t > 1$ sufficiently large, which will yield that $t > 0$ is uniformly bounded.

Let us first estimate $\lv \lbrace u < u(x_0) / 2 \rbrace \cap B_{\theta r}(x_0) \rv$ for $\theta > 0$ sufficiently small, which will be chosen uniformly later. For every $x \in B_{\theta r}(x_0)$, we have
\begin{align*}
u(x) \leq h_t(x) \leq t \ve d-\theta r \ve^{-\gamma} = \left( 1- \theta/2 \right)^{-\gamma} u(x_0). 
\end{align*}
Let us consider the function
\begin{align*}
v(x) := \left( 1- \theta/2 \right)^{-\gamma} u(x_0) - u(x),
\end{align*}
which is non-negative only on $B_{\theta r}(x_0)$. In order to utilize \cite[Theorem 4.7]{KL20}, we consider the function $w := v_+$ instead of $v$, and thus, we need to compute $\mathcal{M}_{\mathcal{L}_0(\varphi)}^- w$ in $B_{\theta r/2}(x_0)$. For $x \in B_{\theta r/2}(x_0)$, since $\M^-_{\mathcal{L}_0(\varphi)} v(x) = - \M^+_{\mathcal{L}_0(\varphi)} u(x) \leq 1$, we have
\begin{align*}
\M^-_{\mathcal{L}_0(\varphi)} w(x)
&\leq \M_{\mathcal{L}_0(\varphi)}^- v(x) + \M_{\mathcal{L}_0(\varphi)}^+ v_-(x) \nonumber \\
&\leq 1 + 2\Lambda c_\varphi \int_{\Rn \cap \lbrace v(x+y) < 0 \rbrace} \frac{v_-(x+y)}{\ve y \ve^n \varphi(\ve y \ve)} \, dy \nonumber \\
&\leq 1 + 2\Lambda c_\varphi \int_{\Rn \setminus B_{\theta r/2}(x)} \frac{(u(x+y) - (1-\theta/2)^{-n} u(x_0))_+}{\ve y \ve^n \varphi(\ve y \ve)} dy \\
&\leq 1 + 2\Lambda c_\varphi \int_{\Rn \setminus B_{\theta r/2}(x)} \frac{\ve u(z) \ve}{\ve z-x \ve^n \varphi(\ve z-x \ve)} dz.
\end{align*}
There is a constant $c > 0$ such that $\ve z-x \ve \geq c \theta r (1+\ve z \ve)$ for all $z \in \Rn \setminus B_{\theta r/2}(x)$. Thus, by means of the weak scaling property \eqref{a:WS}, we have
\begin{equation*}
\M^-_{\mathcal{L}_0(\varphi)} w(x) \leq 1 + C \frac{c_\varphi}{(\theta r)^{n+2}} \int_{\Rn} \frac{\ve u(z) \ve}{1+\ve z \ve^n \varphi(\ve z \ve)} dz \leq 1 + \frac{C}{(\theta r)^{n+2}} \Ve u \Ve_{L^1(\Rn, \omega)} \leq \frac{C}{(\theta r)^{n+2}}.
\end{equation*}
We are now ready to apply \cite[Theorem 4.7]{KL20} to $w$ in $B_{\theta r/2}(x_0)$. Notice that, in our setting, \cite[Theorem 4.7]{KL20} reads as follows;
\begin{theorem} \label{thm:WHI}
Assume $\sigma_1 \geq \sigma_0 > 0$. If $u$ is a non-negative function in $\Rn$ such that $\M^-_{\mathcal{L}_0(\varphi)} u \leq C_0$ in $B_{2R}(x_0)$, then
\begin{equation*}
\ve \lbrace u > t \rbrace \cap B_R(x_0) \ve \leq C R^n \left( u(x_0) + C_0 \Phi(R) \right)^\varepsilon t^{-\varepsilon} \quad\text{for all} ~ t > 0,
\end{equation*}
for some universal constant $C$, depending only on $n$, $\lambda, \Lambda$, $a$, and $\sigma_0$, where
\begin{equation*}
\Phi(R) = \varphi(R) \left( \int_0^1 \frac{r}{\varphi(r)} dr \right) \left( \int_0^1 \frac{r\varphi(R)}{\varphi(rR)} dr \right)^{-1}.
\end{equation*}
\end{theorem}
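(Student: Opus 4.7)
The plan is to prove Theorem \ref{thm:local_boundedness} by a barrier argument in the spirit of Caffarelli--Silvestre, reducing matters to the weak Harnack inequality stated as Theorem \ref{thm:WHI}. After dividing $u$ by $C_0$ we may assume $C_0=1$. Fix $\varepsilon>0$ the exponent provided by Theorem \ref{thm:WHI}, set $\gamma=(n+2)/\varepsilon$, and consider the family of barriers $h_t(x)=t(1-\ve x\ve)^{-\gamma}$. Let $t_0>0$ be the smallest number such that $u\le h_{t_0}$ in $B_{3/4}$. The existence of $t_0$ and of a contact point $x_0\in B_{3/4}$ with $u(x_0)=h_{t_0}(x_0)$ follows from the continuity of $u$ and the blow-up of $h_t$ at $\partial B_1$. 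The whole proof then consists in showing that $t_0$ is bounded by a universal constant, since that immediately gives $u\le t_0(1/4)^{-\gamma}\le C$ in $B_{1/2}$.

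Assume for contradiction that $t_0$ is very large. Set $d=1-\ve x_0\ve$ and $r=d/2$, so $B_r(x_0)\subset B_1$. The integrability bound $\Ve u\Ve_{L^1(\R^n,\omega)}\le 1$ together with the lower estimate $u(x_0)=t_0 d^{-\gamma}$ yields, via Chebyshev and the choice $\gamma\ge n$,
\begin{equation*}
\lv\{u>u(x_0)/2\}\cap B_r(x_0)\rv\le C t_0^{-1}\ve B_r(x_0)\ve.
\end{equation*}
Next, for any $\theta\in(0,1)$ and $x\in B_{\theta r}(x_0)$ the barrier condition gives $u(x)\le(1-\theta/2)^{-\gamma}u(x_0)$, so the function $v(x):=(1-\theta/2)^{-\gamma}u(x_0)-u(x)$ is nonnegative precisely on $B_{\theta r}(x_0)$ (a set where we want to apply the weak Harnack estimate). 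Define $w:=v_+$.

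The core technical step is to bound $\M^-_{\mathcal{L}_0(\varphi)} w$ from above on $B_{\theta r/2}(x_0)$. Writing $v=w-v_-$, using $\M^-_{\mathcal{L}_0(\varphi)} v=-\M^+_{\mathcal{L}_0(\varphi)} u\le 1$ and subadditivity, one is reduced to estimating $\M^+_{\mathcal{L}_0(\varphi)} v_-$, and the support of $v_-$ lies outside $B_{\theta r/2}(x)$. Since the kernels of $\mathcal{L}_0(\varphi)$ are bounded by $\Lambda c_\varphi/(\ve y\ve^n\varphi(\ve y\ve))$, the weak scaling property \eqref{a:WS} allows one to dominate the tail by a constant times $(\theta r)^{-(n+2)}\Ve u\Ve_{L^1(\R^n,\omega)}$, giving
\begin{equation*}
\M^-_{\mathcal{L}_0(\varphi)} w(x)\le \frac{C}{(\theta r)^{n+2}}\quad\text{in }B_{\theta r/2}(x_0).
\end{equation*}
This is the main obstacle: controlling the nonlocal tail uniformly in $\sigma_1,\sigma_2$ without losing integrability in the extremal operator, which is exactly where the $\sigma_1\ge\sigma_0$ and weak scaling assumptions are used.

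Having this estimate, apply Theorem \ref{thm:WHI} to $w$ in $B_{\theta r/2}(x_0)$ with $t=u(x_0)/2$, noting that $w(x_0)=((1-\theta/2)^{-\gamma}-1)u(x_0)\le C\theta u(x_0)$ for $\theta$ small. Using $\Phi(\theta r/4)\le C(\theta r)^2$ and our bound on $\M^-_{\mathcal{L}_0(\varphi)}w$, one obtains
\begin{equation*}
\lv\{u<u(x_0)/2\}\cap B_{\theta r/4}(x_0)\rv\le C(\theta^\varepsilon+\theta^{-(n+2)\varepsilon}u(x_0)^{-\varepsilon})\ve B_{\theta r/4}\ve.
\end{equation*}
Choose first $\theta$ universally small so that the first term is at most $1/4\,\ve B_{\theta r/4}\ve$; then, since $u(x_0)=t_0 d^{-\gamma}\ge t_0$ and $\gamma\varepsilon=n+2$, the second term is at most $C t_0^{-\varepsilon}\ve B_{\theta r/4}\ve\le 1/4\,\ve B_{\theta r/4}\ve$ provided $t_0$ is larger than a universal constant. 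Combined with the earlier measure bound, this gives
\begin{equation*}
\ve B_{\theta r/4}\ve\le\lv\{u>u(x_0)/2\}\cap B_{\theta r/4}(x_0)\rv+\lv\{u\le u(x_0)/2\}\cap B_{\theta r/4}(x_0)\rv\le C t_0^{-1}\ve B_{\theta r/4}\ve+\tfrac{1}{2}\ve B_{\theta r/4}\ve,
\end{equation*}
which is impossible once $t_0$ is sufficiently large. Therefore $t_0$ is bounded by a universal constant, completing the proof.
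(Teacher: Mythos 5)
Your proposal does not prove the statement in question. The statement is Theorem \ref{thm:WHI} itself --- the weak Harnack / $L^\varepsilon$ measure estimate $\ve \lbrace u > t \rbrace \cap B_R(x_0) \ve \leq C R^n (u(x_0) + C_0 \Phi(R))^\varepsilon t^{-\varepsilon}$ for non-negative supersolutions of $\M^-_{\mathcal{L}_0(\varphi)} u \leq C_0$. What you have written is a proof of Theorem \ref{thm:local_boundedness} (the local boundedness, i.e.\ the subsolution half of the Harnack inequality) \emph{assuming} Theorem \ref{thm:WHI}: you explicitly invoke it (``apply Theorem \ref{thm:WHI} to $w$ in $B_{\theta r/2}(x_0)$'') to obtain the measure estimate on $\lbrace u < u(x_0)/2 \rbrace$. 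Relative to the task, this is circular --- the barrier-and-contact-point argument you give is the standard way to \emph{deduce} local boundedness \emph{from} the weak Harnack inequality, and it provides no argument for the weak Harnack inequality itself. In the paper, Theorem \ref{thm:WHI} is not proved at all; it is imported verbatim from \cite[Theorem 4.7]{KL20}, where its proof rests on an entirely different mechanism (a point-estimate for supersolutions obtained via a special barrier and an ABP-type measure estimate, followed by a Calder\'on--Zygmund covering iteration to get the power decay $t^{-\varepsilon}$). None of that machinery appears in your proposal.

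As a proof of Theorem \ref{thm:local_boundedness} your write-up does track the paper's Appendix A closely and would be essentially correct, with one quantitative slip: you claim $\Phi(\theta r/4) \leq C(\theta r)^2$, whereas the weak scaling property only yields $\Phi(\theta r/4) \leq (\theta r/4)^2 + a^2 (\theta r/4)^{\sigma_1} \leq C(\theta r)^{\sigma_0}$, and the exponent bookkeeping at the end must be done with $\sigma_0$ in place of $2$ (this still closes because $\gamma = (n+2)/\varepsilon > n+2$, so $\gamma - (n+2) + \sigma_0 > 0$). But this is beside the main point: to answer the question actually posed you would need to prove the $L^\varepsilon$ estimate for supersolutions, not use it.
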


Therefore, by \Cref{thm:WHI} and \Cref{lem:c_varphi} (i), we obtain
\begin{align*}
&\ve \lbrace u < u(x_0)/2 \rbrace \cap B_{\theta r/4}(x_0) \ve \\
&= \ve \lbrace w > ((1-\theta/2)^{-n} - 1/2) u(x_0) \rbrace \cap B_{\theta r/4}(x_0) \ve \\
&\leq C (\theta r/4)^n \left( ((1-\theta/2)^{-n} - 1) u(x_0) + C \frac{\Phi(\theta r/4)}{(\theta r)^{n+2}} \right)^\varepsilon \left( ((1-\theta/2)^{-n} - 1/2) u(x_0) \right)^{-\varepsilon}.
\end{align*}
We make the quantity $(1-\theta/2)^{-\gamma} -1/2$ bounded away from 0 by taking $\theta > 0$ sufficiently small. Moreover, we observe from \eqref{eq:varphi_rho} that
\begin{equation*}
\Phi(\theta r/4) \leq (\theta r/4)^2 + a^2 (\theta r/4)^{\sigma_1} \leq C (\theta r)^{\sigma_0}.
\end{equation*}
Therefore, recalling that $u(x_0) = t(2r)^{-\gamma}$ and that $\gamma > n+2$, we have
\begin{align*}
\ve \lbrace u < u(x_0)/2 \rbrace \cap B_{\theta r/4}(x_0) \ve
&\leq C (\theta r/4)^n \left( ((1-\theta/2)^{-n} - 1) + \theta^{-n+\sigma_0 - 2} r^{\gamma-(n+2)+\sigma_0} t^{-1} \right)^\varepsilon \\
&\leq C (\theta r/4)^n \left( ((1-\theta/2)^{-n} - 1)^\varepsilon + \theta^{(-n+\sigma_0 - 2)\varepsilon} t^{-\varepsilon} \right).
\end{align*}
Let us take $\theta > 0$ sufficiently small so that
\begin{align*}
C(\theta r)^n \left( \left( 1 - \theta/2 \right)^{-n} - 1 \right)^\varepsilon \leq \frac{1}{4} \lv B_{\theta r/4}(x_0) \rv.
\end{align*}
If $t > 0$ is sufficiently large, then we have
\begin{align*}
C(\theta r)^n \theta^{-n \varepsilon} t^{-\varepsilon} \leq \frac{1}{4} \lv B_{\theta r/4}(x_0) \rv,
\end{align*}
which implies that 
\begin{align*}
\lv \lbrace u < u(x_0)/2 \rbrace \cap B_{\theta r/4}(x_0) \rv \leq \frac{1}{2} \lv B_{\theta r/4}(x_0) \rv.
\end{align*}
Therefore, $t$ is uniformly bounded and the result follows.
\end{proof}

\end{appendix}

\section*{Acknowledgement}

The research of Minhyun Kim is supported by the National Research Foundation of Korea (NRF) grant funded by the Korea government (MSIP) : NRF-2016K2A9A2A13003815. The research of Ki-Ahm Lee is supported by the National Research Foundation of Korea(NRF) grant funded by the Korea government(MSIP) : NRF-2020R1A2C1A01006256.


\end{document}